\definecolor{bgcolor}{rgb}{0.8,1,1}
\definecolor{bgcolor2}{rgb}{0.8,1,0.8}
\definecolor{niceblue}{rgb}{0.0,0.19,0.56}
\definecolor{PineGreen}{RGB}{0,110,51}
\definecolor{BrickRed}{RGB}{143,20,2}
\definecolor{shadecolor}{gray}{0.9}
\declaretheoremstyle[
headfont=\normalfont\bfseries,
notefont=\mdseries, notebraces={(}{)},
bodyfont=\normalfont,
postheadspace=0.5em,
spaceabove=\topsep,
mdframed={
  skipabove=8pt,
  skipbelow=8pt,
  hidealllines=true,
  backgroundcolor={shadecolor},
  innerleftmargin=4pt,
  innerrightmargin=4pt}
]{shaded}
\declaretheorem[style=shaded,within=section]{definition}
\declaretheorem[style=shaded,sibling=definition]{theorem}
\declaretheorem[style=shaded,sibling=definition]{proposition}
\declaretheorem[style=shaded,sibling=definition]{assumption}
\declaretheorem[style=shaded,sibling=definition]{corollary}
\declaretheorem[style=shaded,sibling=definition]{lemma}
\newcommand{\algname}[1]{{\color{PineGreen}\sf  #1}\xspace}
\newcommand{\R}{\mathbb{R}}
\newcommand{\B}{\mathbf{B}}
\newcommand{\A}{\mathbf{A}}
\newcommand{\C}{\mathbf{C}}
\newcommand{\M}{\mathbf{M}}
\newcommand{\hx}{\hat{x}}
\newcommand{\la}{\left\langle}
\newcommand{\ra}{\right\rangle}
\newcommand{\eqdef}{:=}
\newcommand{\argmin}{\text{argmin}}
\definecolor{shadecolor}{gray}{0.9}
\declaretheoremstyle[
headfont=\normalfont\bfseries,
notefont=\mdseries, notebraces={(}{)},
bodyfont=\normalfont,
postheadspace=0.5em,
spaceabove=1pt,
mdframed={
  skipabove=8pt,
  skipbelow=8pt,
  hidealllines=true,
  backgroundcolor={shadecolor},
  innerleftmargin=4pt,
  innerrightmargin=4pt}
]{shaded}
\title{Extragradient Method for $(L_0, L_1)$-Lipschitz Root-finding Problems}%: \\Novel Adaptive Step Size for Improved Performance}
\author{%
  Sayantan Choudhury \\
  % \thanks{Use footnote for providing further information
  %   about author (webpage, alternative address)---\emph{not} for acknowledging
  %   funding agencies.} \\
  AMS \& MINDS \\
  Johns Hopkins University \\
  \texttt{schoudh8@jhu.edu} \\
  % examples of more authors
  \And
  Nicolas Loizou\\
  AMS \& MINDS\\
  Johns Hopkins University\\
  \texttt{nloizou1@jhu.edu} \\
  % \AND
  % Coauthor \\
  % Affiliation \\
  % Address \\
  % \texttt{email} \\
  % \And
  % Coauthor \\
  % Affiliation \\
  % Address \\
  % \texttt{email} \\
  % \And
  % Coauthor \\
  % Affiliation \\
  % Address \\
  % \texttt{email} \\
}
\begin{document}

\maketitle

\begin{abstract}
Introduced by Korpelevich in 1976, the extragradient method (EG) has become a cornerstone technique for solving min-max optimization, root-finding problems, and variational inequalities (VIs). Despite its longstanding presence and significant attention within the optimization community, most works focusing on understanding its convergence guarantees assume the strong $L$-Lipschitz condition. In this work, building on the proposed assumptions by \cite{zhang2019gradient} for minimization and \cite{pmlr-v235-vankov24a} for VIs, we focus on the more relaxed $\alpha$-symmetric $(L_0, L_1)$-Lipschitz condition. This condition generalizes the standard Lipschitz assumption by allowing the Lipschitz constant to scale with the operator norm, providing a more refined characterization of problem structures in modern machine learning. Under the $\alpha$-symmetric $(L_0, L_1)$-Lipschitz condition, we propose a novel step size strategy for EG to solve root-finding problems and establish sublinear convergence rates for monotone operators and linear convergence rates for strongly monotone operators. Additionally, we prove local convergence guarantees for weak Minty operators. We supplement our analysis with experiments validating our theory and demonstrating the effectiveness and robustness of the proposed step sizes for EG.
\end{abstract}

\section{Introduction}\label{sec:intro}
Min-max optimization problems have recently attracted significant interest due to their widespread applications in machine learning, such as reinforcement learning~\citep{brown2020combining,sokotaunified}, distributionally robust optimization~\citep{namkoong2016stochastic}, and generative adversarial network training~\citep{goodfellow2020generative}. These problems are often formulated as variational inequalities (VIs) ~\citep{ryu2022large, gidel2018variational,sokotaunified}. In the unconstrained case, the VI problem simplifies to the root-finding problem~\citep{luo2022extragradient, tran2024variance}, defined as follows~\citep{gorbunov2022extragradient}:
\begin{equation}\label{eq:vip}
\textstyle
    \text{Find } x_* \in \R^d \text{ such that } F(x_*) =  0,
\end{equation}
where $F: \R^d \to \R^d$ is an operator. Root finding problems of the form~\eqref{eq:vip} encompass a variety of problems as special cases, such as: (i) \textbf{Unconstrained minimization:} finding a stationary point of $\min_{x \in \R^d} f(x)$, 
is equivalent to solving~\eqref{eq:vip} with $F(x) = \nabla f(x)$, (ii) \textbf{Min-max optimization:} Let $\min_{w_1 \in \mathbb{R}^{d_1}} \max_{w_2 \in \mathbb{R}^{d_2}} \mathcal{L}(w_1, w_2)$ where $\mathcal{L}: \mathbb{R}^{d_1} \times \mathbb{R}^{d_2} \rightarrow \mathbb{R}$. In this scenario, if in \eqref{eq:vip}, the operator is selected as follows: 
\vspace{-2mm}
\begin{equation}\label{eq:minmax_operator}
    \textstyle
        F(x) \equiv \bigl(
            \nabla_{w_1} \mathcal{L}(w_1, w_2)^\top,\;
            -\nabla_{w_2} \mathcal{L}(w_1, w_2)^\top
        \bigr)^\top,
    \end{equation}
then solving~\eqref{eq:vip} amounts to finding a stationary point $x_* = (w_{1*}^\top, w_{2*}^\top)^\top \in \mathbb{R}^{d_1 + d_2}$ of the min-max problem, which for convex-concave functions $\mathcal{L}$ is a global solution (Nash equilibrium), i.e., $\mathcal{L}(w_{1*}, w_2) \le \mathcal{L}(w_{1*}, w_{2*}) \le \mathcal{L}(w_1, w_{2*})$~\citep{luo2025adaptive, choudhury2024single}, 
and (iii) \textbf{Multiplayer games:} A Nash equilibrium $x_*=(w_{1*}^\top,\dots, w_{N*}^\top)^\top$ of an $N$-player game in which each player $i$ minimizes
their own convex objective $\mathcal{L}_i(w_i, w_{-i})$ with respect to $w_i$ (here $w_{-i}$ denotes
the actions of all players except $i$) is also captured by~\eqref{eq:vip}, with an operator  
$
    F(x) = \bigl(
        \nabla_{w_1} \mathcal{L}_1(w_1, w_{-1})^\top,\;
        \dots,\;
        \nabla_{w_N} \mathcal{L}_N(w_N, w_{-N})^\top
    \bigr)^\top
$~\citep{yoon2025multiplayer}.

Problem~\eqref{eq:vip} and algorithms for solving it have been studied extensively in recent years under different conditions on the operator $F$~\citep{loizou2021stochastic, loizou2020stochastic, gorbunov2022stochastic,diakonikolas2021efficient, choudhury2024single}. One of the well-known algorithms for solving VIs and root-finding problems of the form ~\eqref{eq:vip} is the extragradient (\algname{EG}) method~\citep{korpelevich1977extragradient} due to its superior convergence guarantees~\citep{gorbunov2022extragradient}. The algorithm is defined as follows
\begin{eqnarray}\label{eq:extragradient}
\textstyle
    \hx_k & = & x_k - \gamma_k F(x_k), \notag \\
    x_{k+1} & = & x_k - \omega_k F(\hx_k)
\end{eqnarray}
where $\gamma_k>0$ and $\omega_k > 0$ are the extrapolation step size and update step size, respectively. Since its original inception by Korpelevich, the \algname{EG} method was revisited and
extended in various ways, e.g., non-monotone operators~\citep{diakonikolas2021efficient, fan2023weaker} stochastic \citep{mishchenko2020revisiting, gorbunov2022stochastic,choudhury2024single,li2022convergence}, distributed~\citep{beznosikov2022decentralized}. Despite a rich literature for analysing \algname{EG} and its variants, most of the existing convergence guarantees heavily rely on the $L$-Lipschitz assumption of the operator $F$~\citep{korpelevich1977extragradient, diakonikolas2021efficient}, i.e.
\begin{equation}\label{eq:L-Lipschitz}
\textstyle
    \|F(x) - F(y)\| \leq L\| x - y\|
\end{equation}
for all $x, y \in \R^d$. However, this assumption can be restrictive; for instance, the operator $F(x) = x^2$ for $x \in \R$ does not satisfy \eqref{eq:L-Lipschitz} for any finite $L$~\citep{zhang2019gradient}. \emph{The primary goal of this work is to relax this assumption and establish convergence guarantees under a more general framework.} \\
\newline
\textbf{Relaxing the $L$-Lipschitz Assumption.} Recently, \cite{zhang2019gradient} introduced the $(L_0, L_1)$-smoothness assumption for the minimization problems. Specifically, for $\min_{x \in \R^d} f(x)$, \cite{zhang2019gradient} assume $ \| \nabla^2 f(x) \| \leq L_0 + L_1 \| \nabla f(x)\|$ (when $f$ is twice differentiable) and later~\citep{chen2023generalized} proved that this is equivalent to:
\begin{equation}\label{eq:(L0,L1)}
\textstyle
    \|\nabla f(x) - \nabla f(y)\| \leq (L_0 + L_1 \|\nabla f(x)\|) \|x - y\|.    
\end{equation}
\cite{zhang2019gradient} demonstrated that modern neural networks, such as LSTMs (Long Short-Term Memorys)~\citep{hochreiter1997long}, 
align with $(L_0, L_1)$-smoothness assumption rather than the traditional $L$-smoothness assumption (i.e. $\|\nabla f(x) - \nabla f(y)\| \leq L \|x - y\|$). Moreover, they used this assumption to justify why gradient clipping speeds up neural network training. Later, \cite{ahn2023linear} showed that similar trends hold for the transformer~\citep{vaswani2017attention} architecture.

\begin{wrapfigure}{r}{0.4\textwidth} 
    \centering
 \includegraphics[width=0.42\textwidth]{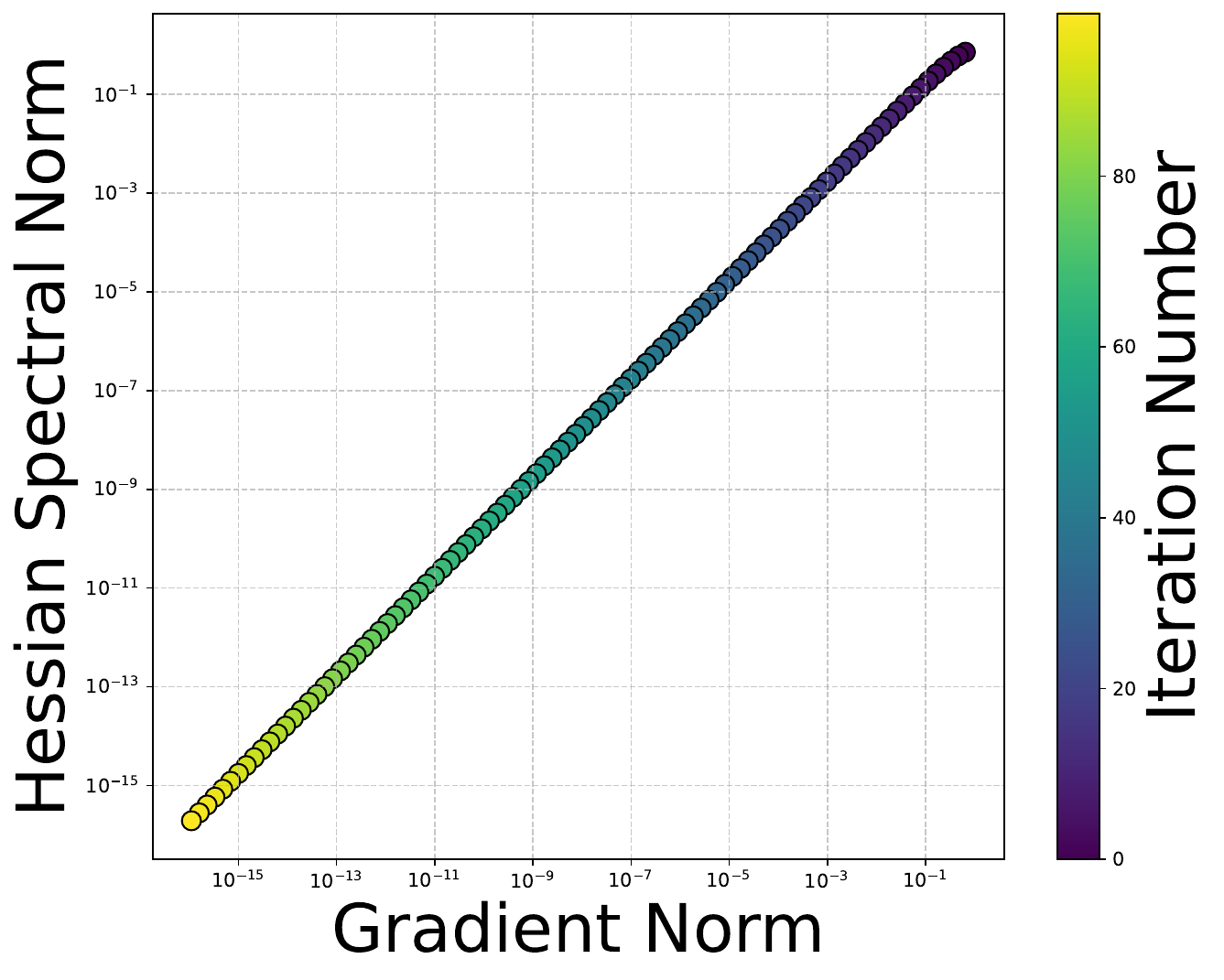}
    \caption{\small{Scatter plot of $\| \nabla^2 f(x_k)\|$ on $y$-axis and $\|\nabla f(x_k)\|$ on $x$-axis.}}
    \label{fig:hessian_vs_gradient_plot}
    \vspace{-4mm}
\end{wrapfigure}

In Figure~\ref{fig:hessian_vs_gradient_plot}, we present an example demonstrating the validity of the $(L_0, L_1)$-smoothness condition~\eqref{eq:(L0,L1)} for the iterates of the \algname{EG}. Similar plots have been presented for gradient descent methods~\citep{zhang2019gradient,gorbunov2024methods}, but to the best of our knowledge, this linear connection between $\|\nabla^2 f(x_k)\|$ and $\|\nabla f(x_k)\|$ for \algname{EG} iterates was never reported before. Following~\citet{gorbunov2024methods}, we consider the minimization problem $\min_{x \in \R^d} f(x) \eqdef \log \left(1 + \exp(- a^{\top}x) \right)$, and plot the values of $\|\nabla^2 f(x_k)\|$ on the $y$-axis against $\|\nabla f(x_k)\|$ on the $x$-axis. Each point is colored according to the iteration index $k$, as indicated by the accompanying colorbar. The resulting plot reveals an approximately linear relationship between $\|\nabla^2 f(x_k)\|$ and $\|\nabla f(x_k)\|$, thereby supporting the modeling of this function within the $\| \nabla^2 f(x)\| \leq L_0 + L_1 \| \nabla f(x)\|$ or $(L_0, L_1)$-smoothness framework.

Now, let us consider the min-max optimization problem $\min_{w_1 \in \R^{d_1}} \max_{w_2 \in \R^{d_2}} \mathcal{L}(w_1, w_2)$, which is captured by \eqref{eq:vip} with the operator \eqref{eq:minmax_operator}. If the operator $F$ is $L$-Lipschitz, then its Jacobian matrix $\mathbf{J}(x)$, defined in \eqref{eq:jacobian}, satisfies $\|\mathbf{J}(x)\| \leq L$ for all $x = (w_1^{\top}, w_2^{\top})^{\top}$(follows from Theorem \ref{thm:equiv_formulation} with $L_1 = 0$). For example, consider the quadratic min-max objective $\min_{w_1} \max_{w_2} \mathcal{L}(w_1, w_2) = \frac{1}{2}w_1^2 + w_1 w_2 - \frac{1}{2}w_2^2$. In this case, implementing the \algname{EG} method and plotting the Jacobian norm $\| \mathbf{J}(x_k) \|$ (on the $y$-axis) against the operator norm $\|F(x_k) \|$ (on the $x$-axis) yields a horizontal line parallel to $x$-axis (see Appendix \ref{appendix:tech_lemma}).

However, this behaviour does not persist for more complex problems. For instance, for the cubic objective
\begin{equation}\label{eq:min_max_cubic}
\textstyle
    \min_{w_1} \max_{w_2} \mathcal{L}(w_1, w_2) = \frac{1}{3}w_1^3 + w_1w_2 - \frac{1}{3}w_2^3,
\end{equation}
$\|\mathbf{J}(x_k)\|$ increases with the $\|F(x_k)\|$. This observation suggests that the standard Lipschitz assumption may be overly restrictive for capturing the structure of such problems (check Figure \ref{fig:jacobian_vs_operator_plot}). \\
\begin{wrapfigure}{r}{0.4\textwidth} 
\vspace{-5mm}
    \centering
\includegraphics[width=0.42\textwidth]{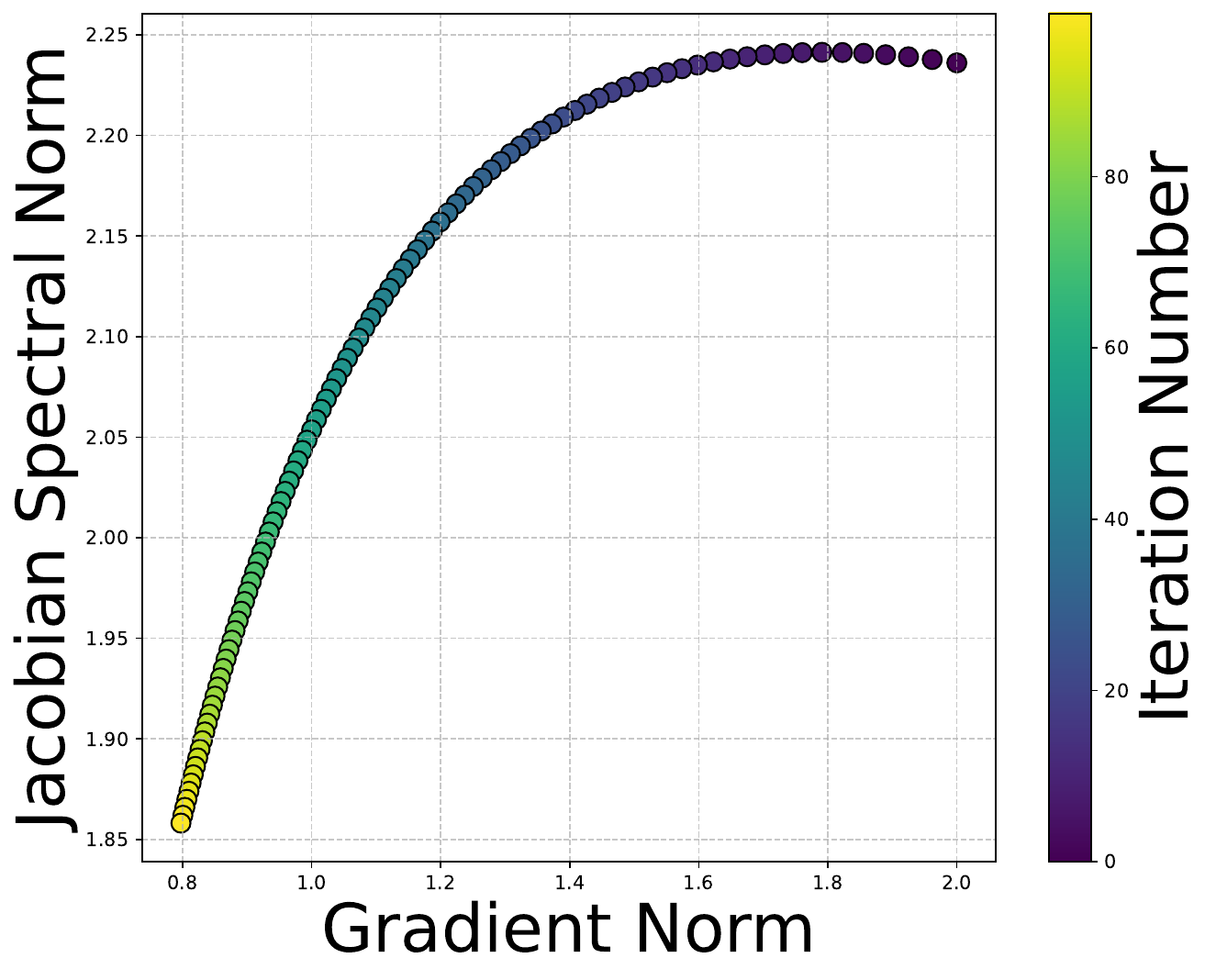}
    \caption{\small Scatter plot of $\| \mathbf{J}(x_k)\|$ on $y$-axis and $\| F(x_k)\|$ on $x$-axis.}\label{fig:jacobian_vs_operator_plot}
    \vspace{-4mm}
\end{wrapfigure}
To better model this relationship, we investigate a relaxed condition of the form $\| \mathbf{J}(x) \| \leq L_0 + L_1 \|F(x)\|^\alpha$ with $\alpha \in (0, 1]$ which generalizes the standard Lipschitz bound (for $L_1 = 0$, this boils down to $\|\mathbf{J}(x)\| \leq L_0$, which is the Lipschitz property). Note that, instead of $\alpha = 1$, our formulation permits $\alpha$ to lie in the broader interval $(0, 1]$. This condition is motivated by the plot in Figure~\ref{fig:jacobian_vs_operator_plot}, which suggests a sublinear relationship, resembling the form $h(r) = L_0 + L_1 r^{\alpha}$ for some $\alpha \in (0, 1)$, rather than a linear trend. 

As we will prove later (in Theorem~\ref{thm:equiv_formulation}), for any  doubly differentiable min-max optimization problem $\min_{w_1} \max_{w_2} \mathcal{L}(w_1, w_2)$, the condition $\| \mathbf{J}(x) \| \leq L_0 + L_1 \| F(x) \|^{\alpha}$ is equivalent to the operator $F$ satisfying the $\alpha$-symmetric $(L_0, L_1)$-Lipschitz condition (see Assumption~\ref{asdioa}). 
The equivalent $\alpha$-symmetric $(L_0, L_1)$-Lipschitz condition does not rely on second-order information and applies to a broader class of problems (no need for double differentiability). Therefore, in the remainder of this work, we focus on analyzing the convergence of \algname{EG} under the $\alpha$-symmetric $(L_0, L_1)$-Lipschitz assumption~\citep{pmlr-v235-vankov24a} on the operator $F$, defined below.

\begin{assumption}
\label{asdioa}
$F$ is called $\alpha$-symmetric $(L_0, L_1)$-Lipschitz operator if for some $L_0, L_1 \geq 0$ and $\alpha \in (0, 1]$,
    \begin{equation}\label{eq:(L0,L1)-Lipschitz}
    \textstyle
        \| F(x) - F(y)\| \leq \left( L_0 + L_1 \max_{\theta \in [0, 1]} \left\|F(\theta x + ( 1- \theta) y) \right\|^{\alpha} \right) \| x- y\| \quad \forall x, y \in \R^d.
    \end{equation}
\end{assumption}
Instead of a fixed Lipschitz constant in \eqref{eq:L-Lipschitz}, Assumption~\ref{asdioa} allows the Lipschitz-like quantity to depend on the norm of the operator itself along the path from $x$ to $y$. This assumption generalizes the standard $L$-Lipschitz condition \eqref{eq:L-Lipschitz}, corresponding to the special case where $L_0 = L$ and $L_1 = 0$. 
Moreover, the $\alpha$-symmetric $(L_0, L_1)$-Lipschitz condition provides a more refined characterisation of operators whose Lipschitz constant depends on their norm, offering a tighter bound by balancing $L_0 \ll L$ and $L_1 \ll L$. Additionally, \eqref{eq:(L0,L1)-Lipschitz} provides a more relaxed bound compared to \eqref{eq:(L0,L1)} with $\alpha = 1$. \\
\newline
\textbf{Classes of Root-finding Problems.} Apart from the condition \eqref{eq:(L0,L1)-Lipschitz}, we will also assume additional structure on the operator $F$ to prove convergence. We say the operator $F$ is monotone or strongly monotone if it satisfies the following assumption.

\begin{assumption} \label{assume:strong_monotone}
    $F$ is called monotone if
    \begin{equation}\label{eq:monotone}
    \textstyle
        \la F(x) - F(y), x - y \ra \geq 0 \quad \forall x, y \in \R^d
    \end{equation}
    and strongly monotone if there is some $\mu > 0$ such that
    \begin{equation}\label{eq:strong_monotone}
    \textstyle
        \la F(x) - F(y), x - y \ra \geq \mu \|x - y\|^2 \quad \forall x, y \in \R^d.
    \end{equation}

\end{assumption}

This captures convex minimization and convex-concave min-max optimization problems as a special case. Apart from the monotone operators, we are also interested in some non-monotone operators, weak Minty operators~\citep{diakonikolas2021efficient}, which satisfy the following assumption. 
\begin{assumption}
    Operator $F$ is called weak Minty if for some $\rho \geq 0$, 
    \begin{equation}\label{eq:weak_minty}
    \textstyle
        \la F(x), x - x_* \ra \geq -\rho \|F(x)\|^2  \qquad \forall x \in \R^d.
    \end{equation}
\end{assumption} 
\vspace{-3mm}
\subsection{Main Contributions}
We summarize the main contributions of our work below.
\begin{itemize}[itemsep=0pt, leftmargin=*]
    \item \textbf{Tighter analysis for strongly monotone:} We establish linear convergence guarantees for strongly monotone~\eqref{eq:strong_monotone} $\alpha$-symmetric $(L_0, L_1)$-Lipschitz problems (see Theorem \ref{theorem:1symm_strongmonotone}, \ref{theorem:1symm_strongmonotone_alhpha01}). In contrast to the results in~\citet{pmlr-v235-vankov24a} for $\alpha = 1$, our analysis shows that linear convergence can be achieved without incurring exponential dependence on the initial distance to the solution $\|x_0 - x_*\|$ (see Corollary~\ref{corollary:1symm_strongmonotone}).

    \item \textbf{First analysis for monotone and weak Minty:} We provide the first convergence analysis of \algname{EG} for solving monotone~\eqref{eq:monotone} and weak Minty \eqref{eq:weak_minty} problems under $\alpha$-symmetric $(L_0, L_1)$-Lipschitz assumption. We establish global sublinear convergence for monotone problems (see Theorem \ref{theorem:1symm_monotone}, \ref{theorem:alpha01}) and local sublinear convergence for weak Minty problems (see Theorem \ref{theorem:weak_minty_alpha1}, \ref{theorem:weak_minty_alpha01}).

    \item \textbf{Novel step size for \algname{EG}:} We propose a novel adaptive step size strategy for the \algname{EG} method designed to handle $\alpha$-symmetric $(L_0, L_1)$-Lipschitz operators. Specifically, all our step size schemes adopt the general form $\gamma_k = \frac{1}{c_0 + c_1 \|F(x_k)\|^{\alpha}},$ where $c_0, c_1 > 0$ are constants determined by the problem-dependent parameters $L_0$, $L_1$, and $\alpha$. In Table \ref{tab:comparison_of_rates}, we included a detailed summary of our proposed step size selection for different classes of operators and compared it with closely related works.

    \item \textbf{Numerical experiments:} 
    Finally, in Section~\ref{sec:experiments}, we present experiments validating different aspects of our theoretical results. We compare our proposed step size selections with existing alternatives, demonstrating the effectiveness and robustness of our approach.

\end{itemize}

\begin{table*}[t]
    \centering
    % \scriptsize
    \caption{
    \small
       Summary of step size selection for \algname{EG} under the $L$-Lipschitz and $\alpha$-symmetric $(L_0, L_1)$-Lipschitz assumptions. Our proposed step size strategy is of the general form $\gamma_k = \frac{1}{c_0 + c_1 \|F(x_k)\|^{\alpha}}$, tailored for solving problems involving $\alpha$-symmetric $(L_0, L_1)$-Lipschitz operators.
    }
    \label{tab:comparison_of_rates}
    \begin{threeparttable}
    \resizebox{\textwidth}{!}{%
        \begin{tabular}{|c|c|c c c|}
        \hline
        Setup & Assumption & $\alpha$ & $\gamma_k$ & $\omega_k$ 
        \\
        \hline\hline
        \multirow{7}{2cm}{\centering Strongly \\ Monotone \tnote{{\color{blue}(1)}}} & \begin{tabular}{c}
            $L$-Lipschitz \tnote{{\color{blue}(2)}}\\
            \citep{mokhtari2020unified}
        \end{tabular} & - & $\frac{0.25}{L}$ & $\gamma_k$\\[10pt]
        
        & \begin{tabular}{c}
            $\alpha$-symmetric $(L_0, L_1)$-Lipschitz \\
            \citep{pmlr-v235-vankov24a} 
        \end{tabular} & $1$ & $\min \left\{ \frac{1}{4 \mu}, \frac{1}{2\sqrt{2} e L_0}, \frac{1}{2 \sqrt{2} e L_1 \|F(x_k)\|} \right\}$ & $\gamma_k$ \\[10pt]

        &\cellcolor{bgcolor2}\begin{tabular}{c}
            $\alpha$-symmetric $(L_0, L_1)$-Lipschitz \\
            (Theorem \ref{theorem:1symm_strongmonotone})
        \end{tabular} & \cellcolor{bgcolor2} $1$ & \cellcolor{bgcolor2} $\frac{0.21}{L_0 + L_1 \|F(x_k)\|}$ & \cellcolor{bgcolor2} $\gamma_k$ \\[10pt]
        &\cellcolor{bgcolor2}\begin{tabular}{c}
            $\alpha$-symmetric $(L_0, L_1)$-Lipschitz\\
            (Theorem \ref{theorem:1symm_strongmonotone_alhpha01})
        \end{tabular} & \cellcolor{bgcolor2} $(0, 1)$ & \cellcolor{bgcolor2} $\frac{0.61}{2K_0 + \left( 2 K_1 + 2^{1 - \alpha} K_2^{1 - \alpha} \right) \| F(x_k)\|^{\alpha}}$ \tnote{{\color{blue}(2)}} & \cellcolor{bgcolor2} $\gamma_k$ \\[10pt]

        \hline\hline
        \multirow{6}{2cm}{\centering Monotone \tnote{{\color{blue}(1)}}} & \begin{tabular}{c}
            $L$-Lipschitz\\
            \citep{gorbunov2022extragradient}
        \end{tabular} & - & $\frac{1}{L}$  & $\frac{\gamma_k}{2}$ \\[10pt]
    
        &\cellcolor{bgcolor2}\begin{tabular}{c}
            $\alpha$-symmetric $(L_0, L_1)$-Lipschitz\\
            (Theorem \ref{theorem:1symm_monotone})
        \end{tabular} & \cellcolor{bgcolor2} $1$ & \cellcolor{bgcolor2} $\frac{0.45}{L_0 + L_1 \|F(x_k)\|}$ & \cellcolor{bgcolor2} $\gamma_k$  \\[10pt]

        &\cellcolor{bgcolor2}\begin{tabular}{c}
            $\alpha$-symmetric $(L_0, L_1)$-Lipschitz\\
            (Theorem \ref{theorem:alpha01})
        \end{tabular} & \cellcolor{bgcolor2} $(0, 1)$ & \cellcolor{bgcolor2} $\frac{1}{2 \sqrt{2} K_0  + \left( 2\sqrt{2} K_1 + 2^{\nicefrac{3 (1 - \alpha)}{2}} K_2^{1 - \alpha} \right) \|F(x_k)\|^{\alpha}}$ & \cellcolor{bgcolor2} $\gamma_k$ \\[10pt]

        \hline\hline
        \multirow{8}{2cm}{\centering Weak Minty \tnote{{\color{blue}(1)}}} & \begin{tabular}{c}
            $L$-Lipschitz\\
            \citep{diakonikolas2021efficient}
        \end{tabular} & - & $\frac{1}{L}$ & $\frac{\gamma_k}{2}$ \\[10pt]
        & \begin{tabular}{c}
            $L$-Lipschitz\\
            \citep{pethick2023escaping}
        \end{tabular} & - & $\frac{1}{L}$ & $\rho + \frac{\la F(\hx_k), x_k - \hx_k\ra}{\| F(\hx_k)\|^2}$ \\[10pt]

        &\cellcolor{bgcolor2}\begin{tabular}{c}
            $\alpha$-symmetric $(L_0, L_1)$-Lipschitz\\
            (Theorem \ref{theorem:weak_minty_alpha1})
        \end{tabular} & \cellcolor{bgcolor2} $1$ & \cellcolor{bgcolor2} $\frac{0.56}{L_0 + L_1 \|F(x_k)\|}$ & \cellcolor{bgcolor2} $\frac{\gamma_k}{2}$ \\[10pt]
        
        & \cellcolor{bgcolor2}\begin{tabular}{c}
            $\alpha$-symmetric $(L_0, L_1)$-Lipschitz \\
            (Theorem \ref{theorem:weak_minty_alpha01})
        \end{tabular} & \cellcolor{bgcolor2} $(0, 1)$ & \cellcolor{bgcolor2} $\frac{1}{2 \sqrt{2} K_0 + \left(2 \sqrt{2}K_1 + 2^{\nicefrac{3 (1 - \alpha)}{2}} K_2^{1 - \alpha} \right)\| F(x_k)\|^{\alpha}}$ &\cellcolor{bgcolor2} $\frac{\gamma_k}{2}$ \; \\[10pt]
        \hline
    \end{tabular}%
    }
    \begin{tablenotes}
        {\scriptsize
        \item [{\color{blue}(1)}] Convergence measure $\bullet$ strongly monotone: $\|x_K - x_*\|^2$, $\bullet$ monotone: $ \min_{0 \leq k \leq K}\|F(x_k)\|^2$, $\bullet$ weak minty: $ \min_{0 \leq k \leq K}\|F(\hx_k)\|^2$.
        \item [{\color{blue}(2)}] For $K_0, K_1, K_2$, check Proposition \ref{prop:equiv_formulation}. Note that, for $L_1 = 0$ we have $K_1 = K_2 = 0$.
        }
    \end{tablenotes}
    \end{threeparttable}
\end{table*}

\vspace{-3mm}
\section{On the $\alpha$-Symmetric $(L_0, L_1)$-Lipschitz Assumption}
We divide this section into two parts. In the first subsection, we present an equivalent reformulation of the $\alpha$-symmetric $(L_0, L_1)$-Lipschitz condition~\eqref{eq:(L0,L1)-Lipschitz} in the context of min-max optimization. In the second subsection, we provide some examples of operators that satisfy \eqref{eq:(L0,L1)-Lipschitz} and highlight its significance.

\subsection{Equivalent Formulation of $\alpha$-Symmetric $(L_0, L_1)$-Lipschitz Assumption}
In this subsection, we consider the min-max optimization problem $\min_{w_1} \max_{w_2} \mathcal{L}(w_1, w_2)$. The corresponding operator $F$ and Jacobian $\mathbf{J}$ are defined as
\begin{equation}\label{eq:jacobian}
\textstyle
    F(x) = \begin{bmatrix}
        \nabla_{w_1} \mathcal{L}(w_1, w_2) \\
        - \nabla_{w_2} \mathcal{L}(w_1, w_2)
    \end{bmatrix} \text{ and  } 
    \mathbf{J}(x) = \begin{bmatrix}
    \nabla^2_{w_1 w_1} \mathcal{L}(w_1, w_2) & \nabla^2_{w_2 w_1} \mathcal{L}(w_1, w_2) \\
    -\nabla^2_{w_1 w_2} \mathcal{L}(w_1, w_2) & -\nabla^2_{w_2 w_2} \mathcal{L}(w_1, w_2)
    \end{bmatrix},
\end{equation}
where $x = (w_1^{\top}, w_2^{\top})^{\top}$. Assuming that $F$ is $\alpha$-symmetric $(L_0, L_1)$-Lipschitz, we obtain the following theorem.

\begin{theorem}\label{thm:equiv_formulation}
    Suppose $F$ is the differentiable operator associated with the problem $\min_{w_1} \max_{w_2} \mathcal{L}(w_1, w_2)$. Then $F$ satisfies the $\alpha$-symmetric $(L_0, L_1)$-Lipschitz condition~\eqref{eq:(L0,L1)-Lipschitz} if and only if
    \begin{equation}\label{eq:equiv_formulation}
    \textstyle
        \|\mathbf{J}(x)\| \leq L_0 + L_1 \|F(x)\|^{\alpha}.
    \end{equation}
    Here $\mathbf{J}(x)$ is the Jacobian defined in \eqref{eq:jacobian} and $\| \mathbf{J}(x)\| = \sigma_{\max}(\mathbf{J}(x))$ i.e. maximum singular value of $\mathbf{J}(x)$. In particular, we have $\| \mathbf{J}(x)\| \leq L$ when operator $F$ is $L$-Lipschitz.
\end{theorem}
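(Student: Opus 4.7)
The plan is to identify the matrix $\mathbf{J}(x)$ displayed in~\eqref{eq:jacobian} as the ordinary Jacobian $DF(x)$ of the map $F(x) = (\nabla_{w_1}\mathcal{L},\, -\nabla_{w_2}\mathcal{L})^\top$. Differentiating block-wise and invoking Schwarz's theorem on mixed partials of $\mathcal{L}$ gives exactly the block matrix shown, so the theorem reduces to a standard equivalence between a Lipschitz-type bound on a differentiable map and an operator-norm bound on its Jacobian, adapted to the state-dependent constant $L_0 + L_1\|F(\cdot)\|^{\alpha}$.

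For the forward implication, I would fix $x \in \mathbb{R}^d$ and a unit vector $v$, set $y = x + tv$ for small $t > 0$, and apply~\eqref{eq:(L0,L1)-Lipschitz} to obtain
\[
\frac{\|F(x+tv) - F(x)\|}{t} \;\leq\; L_0 + L_1 \max_{\theta \in [0,1]} \|F(x + \theta t v)\|^{\alpha}.
\]
Letting $t \to 0^+$, the left-hand side converges to $\|\mathbf{J}(x) v\|$ by differentiability of $F$, while the right-hand side converges to $L_0 + L_1 \|F(x)\|^{\alpha}$ by continuity of $F$ and of $r \mapsto r^{\alpha}$. Taking the supremum over unit vectors $v$ yields $\|\mathbf{J}(x)\| \leq L_0 + L_1 \|F(x)\|^{\alpha}$.

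For the reverse implication, assuming $\|\mathbf{J}(x)\| \leq L_0 + L_1 \|F(x)\|^{\alpha}$, I would apply the fundamental theorem of calculus along the segment from $x$ to $y$:
\[
F(y) - F(x) \;=\; \int_0^1 \mathbf{J}\bigl(x + \theta(y-x)\bigr)\,(y-x)\,d\theta.
\]
Taking norms, using the Jacobian bound, pulling $\|y-x\|$ out of the integral, and replacing the integrand by its maximum over $\theta \in [0,1]$ gives
\[
\|F(y) - F(x)\| \;\leq\; \Bigl( L_0 + L_1 \max_{\theta \in [0,1]} \|F(x + \theta(y-x))\|^{\alpha} \Bigr) \|y - x\|,
\]
which is exactly~\eqref{eq:(L0,L1)-Lipschitz}. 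The final ``in particular'' statement is immediate by specializing to $L_0 = L$ and $L_1 = 0$.

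There is no real obstacle here; the argument is routine once the Jacobian identification is made. The only mild care needed is in the forward direction, where one should verify that the $\max_{\theta \in [0,1]}\|F(x+\theta t v)\|^{\alpha}$ term converges to $\|F(x)\|^{\alpha}$ as $t \to 0^+$, which follows from continuity of $F$ on the compact segment $\{x + \theta t v : \theta \in [0,1]\}$ shrinking to $\{x\}$.
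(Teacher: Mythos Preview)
Your proof is correct and follows the same overall strategy as the paper: a difference-quotient limit for the forward implication and the fundamental theorem of calculus along the segment for the reverse. The one noteworthy difference is that the paper routes both directions through the equivalent integral reformulation of Assumption~\ref{asdioa} (Lemma~\ref{lemma:reform_integration}), so in the forward direction it must pass to the limit in $\frac{1}{\theta'}\int_0^{\theta'}\|F(x+\varphi u)\|^{\alpha}\,d\varphi$ via L'H\^opital and the Leibniz integral rule; by working directly with the $\max_{\theta\in[0,1]}$ formulation you obtain the limit from plain continuity of $F$, which is a cleaner execution of the same idea.
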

This result provides an equivalent characterization of the $\alpha$-symmetric $(L_0, L_1)$-Lipschitz condition~\eqref{eq:(L0,L1)-Lipschitz} for min-max optimization problems. In practice, it is often easier to verify~\eqref{eq:equiv_formulation} than to directly check~\eqref{eq:(L0,L1)-Lipschitz}. In Appendix~\ref{appendix:equiv_formulation}, we provide an example where we use Theorem \ref{thm:equiv_formulation} to verify if an operator satisfies \eqref{eq:(L0,L1)-Lipschitz}. 
\vspace{-2mm}
\subsection{Examples of $\alpha$-Symmetric $(L_0, L_1)$-Lipschitz Operators}\label{sec:examples}
To motivate the significance of this relaxed assumption~\eqref{eq:(L0,L1)-Lipschitz}, we present a few instances of $\alpha$-symmetric $(L_0, L_1)$-Lipschitz operators that highlight its advantages over the conventional $L$-Lipschitz assumption.\\
\newline
\textbf{Example 1}~\citep{gorbunov2024methods}: We start with an example from the minimization setting. Consider the logistic regression loss function $f(x) = \log \left( 1 + \exp{\left( - a^{\top}x \right)}\right)$. Then the corresponding gradient operator $F = \nabla f$ satisfies the $L$-Lipschitz assumption with $L = \|a\|^2$ and $\alpha$-symmetric $(L_0, L_1)$-Lipschitz assumption with $L_0 = 0, L_1 = \|a\|, \alpha = 1$. Therefore, when $\|a\| \gg 1$, the bound provided by $1$-symmetric $(L_0, L_1)$-Lipschitz can be significantly tighter than the one imposed by the $L$-Lipschitz condition. This example emphasizes the benefit of the $\alpha$-symmetric $(L_0, L_1)$-Lipschitz framework in scenarios where standard Lipschitz constants are overly pessimistic.

\textbf{Example 2}: Consider the operator $F(x) = (u_1^2, u_2^2)$ for $x = (u_1, u_2) \in \R^2$ with $x_* = (0, 0)$. Then we can show that
\begin{equation*}
\textstyle
    \|F(x) - F(y)\| \leq 2 \left\|F \left(\frac{x+y}{2} \right) \right\|^{\nicefrac{1}{2}} \|x - y\| \leq 2 \left\| \max_{\theta \in [0, 1]} F \left(\theta x + (1 - \theta) y \right) \right\|^{\nicefrac{1}{2}} \|x - y\|.
\end{equation*}
This establishes that $F$ is $\nicefrac{1}{2}$-symmetric $(0, 2)$-Lipschitz operator. However, this operator $F$ does not satisfy the standard $L$-Lipschitz assumption for any finite choice of $L$. We add the related details to Appendix \ref{appendix:examples}. Therefore, this example highlights the need for relaxed assumptions on operators beyond standard $L$-Lipschitz~\eqref{eq:L-Lipschitz}.

\textbf{Example 3.} Consider the following min-max optimization problem, for any $p > 1$,
\begin{equation*}
\textstyle
    \min_{w_1} \max_{w_2} \;
    \mathcal{L}(w_1, w_2)
    =
    \frac{1}{p+1} \|w_1\|^{p+1}
    + w_1^\top \B w_2
    - \frac{1}{p+1} \|w_2\|^{p+1}.
\end{equation*}
The corresponding operator $F(x)$ defined in~\eqref{eq:minmax_operator} is
$1$-symmetric
$\left(2\tau_0 + \|\M\|, 2^{\frac{2p^2 - 1}{p^2}} \tau_1 \right)$-Lipschitz,
for any choice of $\tau_1 > 0$ and $\tau_0 = \left( \frac{p-1}{\tau_1}\right)^{p-1}, \M = \begin{bmatrix}
            0 & \B \\
            -\B^\top & 0
        \end{bmatrix}$.
Moreover, $F$ is not $L$-Lipschitz for any finite $L$. We have added the proof in Appendix \ref{appendix:examples}. 

In Appendix~\ref{appendix:examples}, we also provide additional examples that illustrate cases where the operator associated with a general bilinearly coupled min-max optimization problem or an $N$-player game satisfies the $\alpha$-symmetric $(L_0, L_1)$-Lipschitz condition.

\section{Convergence Analysis}\label{sec:convergence_analysis}
\vspace{-2mm}
In this section, we present the convergence guarantees of \algname{EG} for solving monotone, strongly monotone, and weakly Minty operators. For strongly monotone operators, we have linear convergence, while for monotone and weak Minty operators, we provide sublinear convergence guarantees. To prove these results, we rely on the similar expression presented in~\cite{chen2023generalized} for the $(L_0, L_1)$-smooth minimization problem. For completeness, we include the proof for $\alpha$-symmetric $(L_0, L_1)$-Lipschitz operators in the Appendix. 
\begin{proposition} \label{prop:equiv_formulation}
    Suppose $F$ is $\alpha$-symmetric $(L_0, L_1)$-Lipschitz operator. Then, for $\alpha = 1$
    \begin{equation}\label{eq:alpha=1}
    \textstyle
        \| F(x) - F(y) \| \leq (L_0 + L_1 \| F(x)\|) \exp{(L_1 \|x - y\|)} \| x- y \|,
    \end{equation}
    and for $\alpha \in (0, 1)$ we have
    \begin{equation}\label{eq:alpha(0,1)}
    \textstyle
        \| F(x) - F(y) \| \leq \left(K_0 + K_1 \|F(x)\|^{\alpha} + K_2 \|x - y\|^{\nicefrac{\alpha}{1 - \alpha}} \right) \|x - y\|
    \end{equation}
    where $K_0 = L_0 (2^{\nicefrac{\alpha^2}{1 - \alpha}} + 1)$, $K_1 = L_1 \cdot 2^{\nicefrac{\alpha^2}{1 - \alpha}}$ and $K_2 = L_1^{\nicefrac{1}{1 - \alpha}} \cdot 2^{\nicefrac{\alpha^2}{1 - \alpha}} \cdot 3^{\alpha} (1 - \alpha)^{\nicefrac{\alpha}{1 - \alpha}}$.
\end{proposition}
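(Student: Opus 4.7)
The plan is to parametrize the line segment from $x$ to $y$, reduce the proposition to a scalar integral inequality for $g(t) \eqdef \|F(z(t))\|$, and then invoke a Gronwall-type argument (for $\alpha=1$) or a Bihari/Bernoulli-type argument (for $\alpha\in(0,1)$).

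First, fix $x,y\in\R^d$ and set $z(t) = x + t(y-x)$ for $t\in[0,1]$, with $g(t) \eqdef \|F(z(t))\|$. Applying Assumption~\ref{asdioa} to the two points $z(s)$ and $z(t)$ (for $0\le s<t\le 1$) gives, by reverse triangle inequality,
\begin{equation*}
|g(t)-g(s)| \;\le\; \|F(z(t))-F(z(s))\| \;\le\; \Bigl(L_0 + L_1\!\!\max_{\theta\in[0,1]}\! g(\theta s+(1-\theta)t)^{\alpha}\Bigr)(t-s)\|y-x\|.
\end{equation*}
This shows that $g$ is continuous (in fact absolutely continuous on compact intervals where it is bounded, so no differentiability of $F$ is needed), and, after letting $s\to 0$ and integrating, it yields the master integral inequality
\begin{equation*}
g(t) \;\le\; g(0) + \|y-x\|\int_0^t \bigl(L_0 + L_1\, M(s)^{\alpha}\bigr)\,ds, \qquad M(s)\eqdef \max_{\tau\in[0,s]} g(\tau).
\end{equation*}

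For $\alpha=1$ this is a linear Gronwall-type inequality. Setting $h(t)\eqdef g(t)+L_0/L_1$, one obtains $h(t)\le h(0)+L_1\|y-x\|\int_0^t h(s)\,ds$ (after taking the running max), so Gronwall gives $h(t)\le h(0)\exp(L_1 t\|y-x\|)$. Multiplying through by $L_1$ yields $L_0 + L_1 g(t) \le (L_0+L_1\|F(x)\|)\exp(L_1 t\|y-x\|)$. Taking $t=1$ and applying Assumption~\ref{asdioa} directly to the pair $(x,y)$ produces exactly \eqref{eq:alpha=1}.

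For $\alpha\in(0,1)$ the inequality is no longer linear in $g$. My plan is the standard shift trick: choose $c = (L_0/L_1)^{1/\alpha}$ so that $L_0 \le L_1 c^{\alpha}$, and observe
\begin{equation*}
L_0 + L_1 u^{\alpha} \;\le\; 2L_1\,(u+c)^{\alpha} \qquad \text{for all } u\ge 0,
\end{equation*}
since $\max(u^{\alpha},c^{\alpha})\le (u+c)^{\alpha}$. The shifted quantity $\tilde g(t) \eqdef g(t)+c$ then satisfies (after taking the running max as above) the Bernoulli-type integral bound $\tilde g(t)^{\,1-\alpha} \le \tilde g(0)^{\,1-\alpha} + 2L_1(1-\alpha)t\|y-x\|$, which I will derive by comparison with the ODE $\tilde g'=2L_1 \tilde g^{\alpha}\|y-x\|$. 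Raising to the power $1/(1-\alpha)$ and using $(a+b)^{1/(1-\alpha)}\le 2^{\alpha/(1-\alpha)}(a^{1/(1-\alpha)}+b^{1/(1-\alpha)})$ I obtain
\begin{equation*}
g(t) \;\le\; 2^{\alpha/(1-\alpha)}\bigl(\|F(x)\| + c\bigr) + 2^{\alpha/(1-\alpha)}\bigl(2L_1(1-\alpha)\|y-x\|\bigr)^{1/(1-\alpha)} - c.
\end{equation*}
To finish, apply Assumption~\ref{asdioa} once more to the endpoints and substitute the bound on $\max_t g(t)$. Using subadditivity of $u\mapsto u^{\alpha}$ (i.e.\ $(a+b+c)^{\alpha}\le a^{\alpha}+b^{\alpha}+c^{\alpha}$) to separate the three contributions $\|F(x)\|^{\alpha}$, $L_0$, and $\|y-x\|^{\alpha/(1-\alpha)}$, and invoking Young's inequality to absorb a leftover cross-term of the form $L_0^{\alpha} L_1 \|x-y\|^{\alpha}$ into $L_0$ and $L_1^{1/(1-\alpha)}\|x-y\|^{\alpha/(1-\alpha)}$, one collects the precise constants $K_0,K_1,K_2$ stated in the proposition.

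The main obstacle is the bookkeeping in the $\alpha\in(0,1)$ case: choosing the correct shift $c$, picking the right splitting inequality $(a+b)^{\beta}\le 2^{\beta-1}(a^{\beta}+b^{\beta})$ versus $(a+b)^{\alpha}\le a^{\alpha}+b^{\alpha}$ at each step, and routing the intermediate cross term through Young's inequality with exponents $1/\alpha$ and $1/(1-\alpha)$ so that the final constants match $L_0(2^{\alpha^2/(1-\alpha)}+1)$, $L_1\cdot 2^{\alpha^2/(1-\alpha)}$, and $L_1^{1/(1-\alpha)}\cdot 2^{\alpha^2/(1-\alpha)}\cdot 3^{\alpha}(1-\alpha)^{\alpha/(1-\alpha)}$ rather than looser ones. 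The underlying estimates are standard, but getting exactly these coefficients is where the care lies.
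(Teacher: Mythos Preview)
Your $\alpha=1$ argument is correct and coincides with the paper's proof: both parametrize the segment, derive a linear integral inequality, and close with Gronwall. The paper packages this via $H(\theta)=L_0\theta+L_1\int_0^\theta\|F(x_u)\|\,du$ and the ODE $H'\le aH+b$, while you work directly with the running maximum of $g(t)+L_0/L_1$; these are equivalent formulations of the same Gronwall step.

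For $\alpha\in(0,1)$ the paper gives no argument of its own and simply cites \cite{chen2023generalized}, so there is nothing in the paper to compare against. Your Bihari/Bernoulli route with the shift $c=(L_0/L_1)^{1/\alpha}$ is the natural approach, and carrying out exactly the steps you list (factor $2$ from the shift, Bihari comparison, the convexity bound $(a+b)^{1/(1-\alpha)}\le 2^{\alpha/(1-\alpha)}(a^{1/(1-\alpha)}+b^{1/(1-\alpha)})$, then subadditivity of $u\mapsto u^\alpha$ on three terms) reproduces $K_0$ and $K_1$ on the nose, but yields
\[
K_2 \;=\; L_1^{1/(1-\alpha)}\cdot 2^{\alpha(1+\alpha)/(1-\alpha)}\cdot(1-\alpha)^{\alpha/(1-\alpha)}
\]
rather than the stated $L_1^{1/(1-\alpha)}\cdot 2^{\alpha^2/(1-\alpha)}\cdot 3^{\alpha}\cdot(1-\alpha)^{\alpha/(1-\alpha)}$ (the discrepancy is $2^{\alpha/(1-\alpha)}$ versus $3^{\alpha}$). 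Note also that with the shift trick there is no residual cross term of the form $L_0^{\alpha}L_1\|x-y\|^{\alpha}$, so the Young-inequality clean-up you mention is not needed on this route; that cross term appears only if you keep $L_0\|x-y\|$ separate in the Bihari bound, which is a different bookkeeping path and produces different $K_0,K_1$. So your outline is sound as a proof of the inequality with \emph{some} admissible constants, but to land exactly on Chen et al.'s $3^{\alpha}$ you will have to follow their specific splitting rather than the shift-then-convexity sequence you sketched.
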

Proposition \ref{prop:equiv_formulation} eliminates the maximum over $\theta \in [0, 1]$ in \eqref{eq:(L0,L1)-Lipschitz} and provides a simpler upper bound on the $\|F(x) - F(y)\|$. We divide the rest of the section into three subsections based on the structure of operators. Moreover, each of these subsections is divided into two parts depending on the value of $\alpha$, i.e. $\alpha = 1$ and $\alpha \in (0, 1)$.

\vspace{-2mm}
\subsection{Convergence Guarantees for Strongly Monotone Operators}
\vspace{-2mm}
In case of strongly monotone operators~\eqref{eq:strong_monotone}, we achieve linear convergence rates, analogous to those obtained under standard $L$-Lipschitz assumptions~\citep{tseng1995linear, mokhtari2020unified}. For $\alpha = 1$, operator $F$ satisfies the condition \eqref{eq:alpha=1}. To guarantee convergence for this class of operators, we use the \algname{EG} with step size $\gamma_k = \omega_k = \nicefrac{\nu}{\left( L_0 + L_1 \|F(x_k)\| \right)}$ and $\nu > 0$. \cite{gorbunov2024methods} used similar step sizes for the Gradient Descent algorithm to solve convex minimization problems. 
\begin{theorem}\label{theorem:1symm_strongmonotone}
    Suppose $F$ is $\mu$-strongly monotone and $1$-symmetric $(L_0, L_1)$-Lipschitz operator. Then \algname{EG} with step size $\gamma_k = \omega_k = \frac{\nu}{L_0 + L_1 \|F(x_k)\|}$ satisfy
    \begin{eqnarray*}
        \|x_{k+1} - x_*\|^2 & \leq & \left( 1 - \frac{\nu \mu}{L_0 \left( 1 + L_1 \exp{\left( L_1 \|x_0 - x_*\|\right)} \|x_0 - x_* \|\right)} \right)^{k+1} \|x_0 - x_*\|^2
    \end{eqnarray*}
    where $\nu > 0$ satisfy $ 1 - 2 \nu - \nu^2 \exp{2\nu} = 0$.
\end{theorem}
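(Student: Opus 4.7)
\textbf{Proof plan for Theorem \ref{theorem:1symm_strongmonotone}.}
The plan is to follow the classical extragradient expansion, but replace the standard Lipschitz bound with the norm-dependent estimate \eqref{eq:alpha=1} from Proposition~\ref{prop:equiv_formulation}, and then use the equation $1-2\nu-\nu^2\exp(2\nu)=0$ to cancel all the error terms exactly, leaving a clean contraction. First, I would start from
\[
\|x_{k+1}-x_*\|^2 = \|x_k - x_*\|^2 - 2\gamma_k\langle F(\hat x_k), x_k - x_*\rangle + \gamma_k^2\|F(\hat x_k)\|^2,
\]
split $x_k - x_* = (x_k - \hat x_k) + (\hat x_k - x_*)$, and apply strong monotonicity at $\hat x_k$ together with $F(x_*)=0$ to get $\langle F(\hat x_k),\hat x_k-x_*\rangle\ge \mu\|\hat x_k - x_*\|^2$. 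Since $x_k-\hat x_k=\gamma_k F(x_k)$, the cross-term becomes $\gamma_k\langle F(\hat x_k),F(x_k)\rangle$, and using $\|F(\hat x_k)\|^2 - 2\langle F(\hat x_k),F(x_k)\rangle = \|F(\hat x_k)-F(x_k)\|^2 - \|F(x_k)\|^2$, I obtain
\[
\|x_{k+1}-x_*\|^2 \le \|x_k-x_*\|^2 -2\gamma_k\mu\|\hat x_k-x_*\|^2 + \gamma_k^2\|F(\hat x_k)-F(x_k)\|^2 - \gamma_k^2\|F(x_k)\|^2.
\]

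Next, I would control the cross-error $\|F(\hat x_k)-F(x_k)\|$ using \eqref{eq:alpha=1}. Since $\|\hat x_k - x_k\| = \gamma_k\|F(x_k)\|$ and the step size satisfies $\gamma_k(L_0 + L_1\|F(x_k)\|)=\nu$, the exponent reduces to $L_1\gamma_k\|F(x_k)\|\le \nu$, so $\|F(\hat x_k)-F(x_k)\|\le \nu\exp(\nu)\|F(x_k)\|$, and hence $\gamma_k^2\|F(\hat x_k)-F(x_k)\|^2 \le \nu^2\exp(2\nu)\,\gamma_k^2\|F(x_k)\|^2$. Plugging this in and using the defining relation $\nu^2\exp(2\nu)-1=-2\nu$, the operator-norm terms consolidate and, since $\gamma_k^2\|F(x_k)\|^2=\|\hat x_k - x_k\|^2$, I reach
\[
\|x_{k+1}-x_*\|^2 \le \|x_k-x_*\|^2 -2\gamma_k\mu\|\hat x_k-x_*\|^2 - 2\nu\|\hat x_k - x_k\|^2.
\]

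To turn this into a contraction on $\|x_k-x_*\|^2$, I would apply $\|x_k-x_*\|^2\le 2\|\hat x_k-x_*\|^2 + 2\|\hat x_k-x_k\|^2$, which gives
\[
\|x_{k+1}-x_*\|^2 \le \bigl(1 - \min\{\gamma_k\mu,\nu\}\bigr)\|x_k-x_*\|^2.
\]
It remains to verify that the minimum is $\gamma_k\mu$: strong monotonicity combined with \eqref{eq:alpha=1} applied at $y=x_*$ (where $F(x_*)=0$) yields the pointwise bound $\|F(x)\|\le L_0\exp(L_1\|x-x_*\|)\|x-x_*\|$, so sending $x\to x_*$ gives $\mu\le L_0\le L_0+L_1\|F(x_k)\|$, whence $\gamma_k\mu\le\nu$. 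This already gives monotone decrease of $\|x_k-x_*\|$, so an induction yields $\|x_k-x_*\|\le \|x_0-x_*\|$ for all $k$, and the same pointwise estimate then gives $\|F(x_k)\|\le L_0\exp(L_1\|x_0-x_*\|)\|x_0-x_*\|$, hence
\[
\gamma_k\mu \;\ge\; \frac{\nu\mu}{L_0\bigl(1 + L_1\exp(L_1\|x_0-x_*\|)\|x_0-x_*\|\bigr)}.
\]
Unrolling the contraction over $k$ iterations delivers the stated bound.

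The main obstacle is the exact algebraic cancellation in the second step: the defining equation for $\nu$ is engineered so that the positive residue $\nu^2\exp(2\nu)\|F(x_k)\|^2$ coming from the $(L_0,L_1)$-Lipschitz bound is absorbed by the negative $-\|F(x_k)\|^2$ with exactly $2\nu$ to spare, which is precisely the weight needed to pair with $2\gamma_k\mu\|\hat x_k-x_*\|^2$ and recover a clean contraction via the parallelogram-type inequality. Establishing the inequality $\mu\le L_0$ is the other subtle step, since without it one would only obtain $\min\{\gamma_k\mu,\nu\}=\nu$, which fails to match the stated rate.
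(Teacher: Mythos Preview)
Your proposal is correct and follows essentially the same route as the paper: expand the EG update, apply strong monotonicity at $\hat x_k$, bound $\|F(\hat x_k)-F(x_k)\|$ via \eqref{eq:alpha=1} with $\gamma_k(L_0+L_1\|F(x_k)\|)=\nu$, use the defining equation for $\nu$ to kill the residual, and then lower-bound $\gamma_k$ via $\|F(x_k)\|\le L_0 e^{L_1\|x_0-x_*\|}\|x_0-x_*\|$. The only organizational difference is that the paper applies the inequality $-\|\hat x_k-x_*\|^2\le -\tfrac12\|x_k-x_*\|^2+\|x_k-\hat x_k\|^2$ \emph{before} the Lipschitz bound (arriving at the lemma \eqref{eq:strong_monotone_EG} and then checking $1-2\gamma_k\mu-\nu^2 e^{2\nu}\ge 0$), whereas you keep the $-2\gamma_k\mu\|\hat x_k-x_*\|^2$ and $-2\nu\|\hat x_k-x_k\|^2$ terms and combine them at the end via $\|x_k-x_*\|^2\le 2\|\hat x_k-x_*\|^2+2\|\hat x_k-x_k\|^2$; both routes land on the same contraction $(1-\gamma_k\mu)$, and your justification of $\mu\le L_0$ is in fact more explicit than the paper's.
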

The equation $1 - 2 \nu - \nu^2 \exp{(2\nu)} = 0$ admits a positive solution, approximately $\nu \approx 0.363$. 
Specifically, to ensure $\|x_K - x_*\|^2 \leq \varepsilon$, we require $K = \mathcal{O} \left( \left(\frac{L_0}{\mu} + \frac{L_0 L_1 \|x_0 - x_*\| \exp{ \left(L_1\|x_0 - x_*\| \right)}}{\mu} \right) \log \frac{1}{\varepsilon} \right)$ iterations. When $L_1 = 0$, we recover the best-known results for the strongly monotone $L$-Lipschitz setting~\citep{tseng1995linear, mokhtari2020unified}. \citet{pmlr-v235-vankov24a} also studied constrained strongly monotone problems and obtained similar guarantees with an alternative step size scheme.

However, using a refined proof technique, we can eliminate the $\exp(L_1\|x_0 - x_*\|)$ term from the convergence rate and establish a tighter bound.
One of the intermediate steps of Theorem \ref{theorem:1symm_strongmonotone} is proving a lower bound on the step size $\gamma_k$, which can be very loose for large $k$. 
We now show that after a certain number of iterations $K'$~\eqref{eq:1symm_strongmonoton_noexp_eq2}, the operator norm satisfies $\|F(x_k)\| \leq \nicefrac{L_0}{L_1}$ for all $k \geq K'$, which implies $\gamma_k = \omega_k \geq \nicefrac{\nu}{2L_0}$ for all $k \geq K'$.

\begin{corollary}\label{corollary:1symm_strongmonotone}
    Suppose $F$ is a $\mu$-strongly monotone and $1$-symmetric $(L_0, L_1)$-Lipschitz operator. Then, \algname{EG} with step sizes $\gamma_k = \omega_k = \frac{\nu}{L_0 + L_1 \|F(x_k)\|}$ guarantees $\|x_{K+1} - x_*\|^2 \leq \varepsilon$ after at most
    \begin{equation}\label{corollary:1symm_strongmonotone_eq1}
    \textstyle
        K = \underbrace{\frac{2 L_0}{\nu \mu} \log \left( \frac{\|x_0 - x_*\|^2}{\varepsilon}\right)}_{\text{Term I}} + \underbrace{\frac{1}{\zeta \mu} \log \left( \frac{2L_1 \|x_0 - x_*\|^2}{\zeta^2 L_0} \right)}_{\text{Term II}} 
    \end{equation}
    iterations, where we have $\zeta \coloneqq \nicefrac{\nu}{L_0 \left( 1 + L_1 \exp\left( L_1 \|x_0 - x_*\|\right) \|x_0 - x_* \| \right)}$, and $\nu > 0$ satisfies $1 - 4 \nu - 2\nu^2 \exp(2\nu) = 0$. 
\end{corollary}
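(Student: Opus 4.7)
The plan is a two-phase sharpening of Theorem \ref{theorem:1symm_strongmonotone}, motivated by the observation that its uniform step-size lower bound $\gamma_k \geq \zeta$ is extremely loose once $\|F(x_k)\|$ has decayed, since $\gamma_k = \nu/(L_0 + L_1\|F(x_k)\|)$ approaches $\nu/L_0$ as $\|F(x_k)\| \to 0$. I therefore propose to run the Phase I analysis of Theorem \ref{theorem:1symm_strongmonotone} with its slow rate $(1-\zeta\mu)$ only long enough to reach the regime $\|F(x_k)\| \leq L_0/L_1$; from that point on, $\gamma_k \geq \nu/(2L_0)$ automatically, and the remaining iterations contract at the faster, $L_1$- and $\|x_0-x_*\|$-independent rate $(1-\nu\mu/(2L_0))$. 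Term II in \eqref{corollary:1symm_strongmonotone_eq1} will correspond to the burn-in time, and Term I to the fast-regime time.

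\emph{Phase I (burn-in).} Theorem \ref{theorem:1symm_strongmonotone} already guarantees $\|x_k-x_*\|^2 \leq (1-\zeta\mu)^k\|x_0-x_*\|^2$, and in particular that $\|x_k-x_*\|$ is nonincreasing in $k$. Applying Proposition \ref{prop:equiv_formulation} to the pair $(x_k,x_*)$ and using $F(x_*)=0$ yields $\|F(x_k)\| \leq L_0\exp(L_1\|x_k-x_*\|)\|x_k-x_*\| \leq L_0 E\,\|x_k-x_*\|$, where $E:=\exp(L_1\|x_0-x_*\|)$. Thus $\|F(x_k)\|\leq L_0/L_1$ is implied by $\|x_k-x_*\|^2\leq 1/(L_1 E)^2$, which by the Phase I rate holds after $K' = \mathcal{O}(\tfrac{1}{\zeta\mu}\log(L_1^2 E^2 \|x_0-x_*\|^2))$ iterations. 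Using the defining identity $1 + L_1 E \|x_0-x_*\| = \nu/(\zeta L_0)$ (read off from the definition of $\zeta$) to eliminate the exponential factor $L_1 E$ in favour of $\zeta$, $L_0$, and $\|x_0-x_*\|$ produces the logarithmic argument in Term II.

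\emph{Phase II (fast regime).} For $k\geq K'$ the bound $\|F(x_k)\|\leq L_0/L_1$ gives $\gamma_k\geq \nu/(2L_0)$. Re-running the per-iteration descent argument underlying Theorem \ref{theorem:1symm_strongmonotone}---which, as a function of the actual step size, has the schematic form $\|x_{k+1}-x_*\|^2\leq (1-\gamma_k\mu)\|x_k-x_*\|^2$---with this improved lower bound yields a contraction of $(1-\nu\mu/(2L_0))$. Unrolling from iteration $K'$ until $\|x_{K+1}-x_*\|^2\leq\varepsilon$ and using $\|x_{K'}-x_*\|\leq\|x_0-x_*\|$ requires at most $\tfrac{2L_0}{\nu\mu}\log(\|x_0-x_*\|^2/\varepsilon)$ additional iterations, which is Term I; summing with $K'$ gives the stated $K$.

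The main obstacle I anticipate is justifying that one step-size family $\gamma_k=\nu/(L_0+L_1\|F(x_k)\|)$ supports both regimes simultaneously: the descent inequality inherited from Theorem \ref{theorem:1symm_strongmonotone} must remain valid whether $\gamma_k$ sits near its Phase I floor $\zeta$ or its Phase II floor $\nu/(2L_0)$. This is precisely where the stricter defining equation $1-4\nu-2\nu^2\exp(2\nu)=0$ (versus $1-2\nu-\nu^2\exp(2\nu)=0$ in Theorem \ref{theorem:1symm_strongmonotone}) enters: the additional factor of two in the linear and quadratic coefficients provides exactly the algebraic slack needed to absorb the Lipschitz-like bound along the extragradient step uniformly in $\gamma_k\in[\zeta,\nu/(2L_0)]$, so that the same $\nu$ drives both the slow-phase and fast-phase contractions.
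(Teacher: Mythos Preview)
Your two-phase plan---burn in until $\|F(x_k)\|\le L_0/L_1$, then switch to the fast rate $(1-\nu\mu/(2L_0))$---is exactly the paper's strategy, and your Phase II argument matches the paper's. The difference is in Phase I. You bound $\|F(x_k)\|$ \emph{indirectly}, via the Lipschitz estimate $\|F(x_k)\|\le L_0 E\,\|x_k-x_*\|$ together with the Theorem~\ref{theorem:1symm_strongmonotone} contraction of $\|x_k-x_*\|$. The paper instead sharpens the one-step descent to
\[
\|x_{k+1}-x_*\|^2 \;\le\; (1-\gamma_k\mu)\|x_k-x_*\|^2 \;-\;\tfrac{\gamma_k^2}{2}\,\|F(x_k)\|^2,
\]
and reads $\|F(x_k)\|^2$ off directly from the new negative term, obtaining $\tfrac{\zeta^2}{2}\|F(x_k)\|^2\le(1-\zeta\mu)^{k+1}\|x_0-x_*\|^2$; solving $\|F(x_k)\|\le L_0/L_1$ from this is what produces the exact logarithm $\log\bigl(2L_1\|x_0-x_*\|^2/(\zeta^2 L_0)\bigr)$ in Term~II. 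Your route gives $\log\bigl((L_1 E\|x_0-x_*\|)^2\bigr)$ instead, and the substitution $1+L_1 E\|x_0-x_*\|=\nu/(\zeta L_0)$ does not turn this into the stated Term~II; the two quantities are of the same order but not equal.

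This is also where your reading of the stricter equation $1-4\nu-2\nu^2 e^{2\nu}=0$ goes astray. Its purpose is not to make the descent inequality ``uniform in $\gamma_k\in[\zeta,\nu/(2L_0)]$''---that inequality is already uniform in $\gamma_k$ for the Theorem~\ref{theorem:1symm_strongmonotone} value of $\nu$. Rather, the stricter equation is equivalent to $1-2\nu-\nu^2 e^{2\nu}=\tfrac12$ (instead of $=0$), which is exactly what upgrades the $\|F(x_k)\|^2$ coefficient in the descent lemma from merely nonnegative to $\tfrac12$, enabling the paper's direct bound on $\|F(x_k)\|$. Your Phase I argument, by contrast, never uses this refined descent and would in fact go through with the weaker $\nu$ of Theorem~\ref{theorem:1symm_strongmonotone}.
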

This result shows that to reach an accuracy of $\varepsilon > 0$, we need \eqref{corollary:1symm_strongmonotone_eq1} iterations. Importantly, Term II in \eqref{corollary:1symm_strongmonotone_eq1} is independent of $\varepsilon$, and the Term I of \eqref{corollary:1symm_strongmonotone_eq1} no longer depends on $\exp(L_1\|x_0 - x_*\|)$. Technically, Term II corresponds to the number of iterations required for the step sizes $\gamma_k$ and $\omega_k$ to exceed $\nicefrac{\nu}{2L_0}$, while Term I captures the iteration complexity of \algname{EG} with a fixed step size $\nicefrac{\nu}{2L_0}$.

Now, we investigate the behavior of $\alpha$-symmetric $(L_0, L_1)$-Lipschitz operators for $0 < \alpha < 1$. In this regime, we adopt a step size of the order $\mathcal{O}\left( \|F(x_k)\|^{-\alpha} \right)$ and prove the following result.

\begin{theorem}\label{theorem:1symm_strongmonotone_alhpha01}
    Suppose $F$ is $\mu$-strongly monotone and $\alpha$-symmetric $(L_0, L_1)$-Lipschitz operator with $\alpha \in (0, 1)$. Then \algname{EG} with $\gamma_k = \omega_k = \frac{\nu}{2K_0 + \left( 2 K_1 + 2^{1 - \alpha} K_2^{1 - \alpha} \right) \| F(x_k)\|^{\alpha}}$ satisfy
    \begin{eqnarray*}
    \textstyle
        \|x_{k+1} - x_*\|^2 \leq \left( 1 - \frac{\nu \mu}{2 K_0 + (2 K_1 + 2^{1 - \alpha} K_2^{1 - \alpha})(K_0 + K_2 \|x_0 - x_*\|^{\nicefrac{\alpha}{1 - \alpha}})^{\alpha} \|x_0 - x_*\|^{\alpha}}\right)^{k+1} \|x_0 - x_*\|^2
    \end{eqnarray*}
    where $\nu \in (0, 1)$ is a constant such that $1 - \nu - \nu^2 = 0$.
\end{theorem}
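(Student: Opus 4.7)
The plan is to establish a per-step contraction of the form $\|x_{k+1} - x_*\|^2 \leq (1 - \gamma_k\mu)\|x_k - x_*\|^2$ and then exploit the resulting non-expansiveness to secure a uniform lower bound on $\gamma_k$ across all iterations. First I would expand $\|x_{k+1} - x_*\|^2 = \|x_k - x_* - \gamma_k F(\hat{x}_k)\|^2$, split $\langle F(\hat{x}_k), x_k - x_*\rangle$ as $\langle F(\hat{x}_k), \hat{x}_k - x_*\rangle + \gamma_k\langle F(\hat{x}_k), F(x_k)\rangle$, apply strong monotonicity at $(\hat{x}_k, x_*)$ combined with $F(x_*) = 0$, and invoke the polarization identity $\gamma_k^2\|F(\hat{x}_k)\|^2 - 2\gamma_k^2\langle F(\hat{x}_k), F(x_k)\rangle = \gamma_k^2\|F(\hat{x}_k) - F(x_k)\|^2 - \gamma_k^2\|F(x_k)\|^2$. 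The outcome is the standard one-step EG inequality
\begin{equation*}
\|x_{k+1} - x_*\|^2 \leq \|x_k - x_*\|^2 - 2\gamma_k\mu\|\hat{x}_k - x_*\|^2 + \gamma_k^2\|F(\hat{x}_k) - F(x_k)\|^2 - \gamma_k^2\|F(x_k)\|^2.
\end{equation*}

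The next step is to control $\|F(\hat{x}_k) - F(x_k)\|$ through Proposition \ref{prop:equiv_formulation}, which gives $\|F(\hat{x}_k) - F(x_k)\| \leq (K_0 + K_1\|F(x_k)\|^\alpha + K_2\|\hat{x}_k - x_k\|^{\alpha/(1-\alpha)})\|\hat{x}_k - x_k\|$. Substituting $\|\hat{x}_k - x_k\| = \gamma_k\|F(x_k)\|$ and using the step size $\gamma_k = \nu/A_k$ with $A_k = 2K_0 + (2K_1 + 2^{1-\alpha}K_2^{1-\alpha})\|F(x_k)\|^\alpha$, I would show that the bracketed factor $K_0 + K_1\|F(x_k)\|^\alpha + K_2(\gamma_k\|F(x_k)\|)^{\alpha/(1-\alpha)}$ is dominated by $A_k$, so that $\gamma_k\|F(\hat{x}_k) - F(x_k)\| \leq \nu\|\hat{x}_k - x_k\|$ and hence $\gamma_k^2\|F(\hat{x}_k) - F(x_k)\|^2 \leq \nu^2\gamma_k^2\|F(x_k)\|^2$. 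The first two summands are trivially bounded by $2K_0$ and $2K_1\|F(x_k)\|^\alpha$ respectively; the hard part is the third, whose non-standard exponent $\alpha/(1-\alpha)$ forces the argument to raise the inequality $A_k \geq 2^{1-\alpha}K_2^{1-\alpha}\|F(x_k)\|^\alpha$ to the power $1/(1-\alpha)$, obtaining $\gamma_k^{1/(1-\alpha)}\|F(x_k)\|^{\alpha/(1-\alpha)} \leq \nu^{1/(1-\alpha)}/(2K_2)$, which together with $\nu \leq 1$ slots $K_2(\gamma_k\|F(x_k)\|)^{\alpha/(1-\alpha)}$ inside the $2^{1-\alpha}K_2^{1-\alpha}\|F(x_k)\|^\alpha$ piece of $A_k$. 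Substituting back produces
\begin{equation*}
\|x_{k+1} - x_*\|^2 \leq \|x_k - x_*\|^2 - 2\gamma_k\mu\|\hat{x}_k - x_*\|^2 - (1 - \nu^2)\gamma_k^2\|F(x_k)\|^2.
\end{equation*}

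To convert this into a recursion on $\|x_k - x_*\|^2$, I would use $\|x_k - x_*\|^2 \leq 2\|\hat{x}_k - x_*\|^2 + 2\gamma_k^2\|F(x_k)\|^2$ (from $\|a + b\|^2 \leq 2\|a\|^2 + 2\|b\|^2$) to eliminate the $\|\hat{x}_k - x_*\|^2$ term, giving
\begin{equation*}
\|x_{k+1} - x_*\|^2 \leq (1 - \gamma_k\mu)\|x_k - x_*\|^2 + \gamma_k^2\|F(x_k)\|^2\bigl[2\gamma_k\mu - (1 - \nu^2)\bigr].
\end{equation*}
Here the defining identity $1 - \nu - \nu^2 = 0$, equivalently $1 - \nu^2 = \nu$, is exactly what makes the bracket non-positive whenever $\gamma_k\mu \leq \nu/2$. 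That condition is automatic: strong monotonicity combined with Assumption \ref{asdioa} applied in the limit $x \to x_*$ forces $\mu \leq L_0 \leq K_0$, and $\gamma_k \leq \nu/(2K_0)$ by the step-size formula, giving $\gamma_k\mu \leq \nu/2$ unconditionally. Hence $\|x_{k+1} - x_*\|^2 \leq (1 - \gamma_k\mu)\|x_k - x_*\|^2$, which in particular implies non-expansiveness $\|x_k - x_*\| \leq \|x_0 - x_*\|$. Feeding this back into Proposition \ref{prop:equiv_formulation} applied at $y = x_*$ (where $F(x_*) = 0$ eliminates the $K_1$ term) yields $\|F(x_k)\|^\alpha \leq (K_0 + K_2\|x_0 - x_*\|^{\alpha/(1-\alpha)})^\alpha\|x_0 - x_*\|^\alpha$, which upper bounds $A_k$ by the constant $D$ appearing in the theorem's denominator; thus $\gamma_k \geq \nu/D$ uniformly, and iterating the per-step contraction across $k+1$ steps gives the claimed rate. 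The main obstacle is handling the $K_2$ term with its unusual exponent $\alpha/(1-\alpha)$, which is precisely what motivates the coefficient $2^{1-\alpha}K_2^{1-\alpha}$ in the denominator of the step size; once that design choice is in place, the remainder of the analysis parallels the classical strongly monotone $L$-Lipschitz EG proof.
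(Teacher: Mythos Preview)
Your proposal is correct and follows essentially the same approach as the paper's proof. The paper packages the initial EG expansion (through the $\|\hat x_k - x_*\|^2 \to \|x_k - x_*\|^2$ conversion) as a preliminary lemma and then applies the Proposition~\ref{prop:equiv_formulation} bound, whereas you apply the Lipschitz bound first and do the conversion afterward; the inequality you invoke, $\|x_k - x_*\|^2 \leq 2\|\hat x_k - x_*\|^2 + 2\gamma_k^2\|F(x_k)\|^2$, is algebraically the same as the paper's $-\|a\|^2 \leq -\tfrac12\|a+b\|^2 + \|b\|^2$, and both routes land on the identical condition $1 - 2\gamma_k\mu - \nu^2 \geq 0$ handled via $\mu \leq K_0$ and $1 - \nu - \nu^2 = 0$.
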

This result establishes linear convergence. In particular, to ensure $\|x_{K} - x_*\|^2 \leq \varepsilon$, it suffices to run $K = \mathcal{O} \left(\left( \frac{K_0}{\mu} + \frac{(K_1 K_2^{\alpha} + K_2)\|x_0 - x_*\|^{\nicefrac{\alpha}{1- \alpha}}}{\mu}\right) \log \frac{1}{\varepsilon} \right)$ iterations. Compared to the $L$-Lipschitz setting, the bound here includes an additional dependence on $\|x_0 - x_*\|^{\nicefrac{\alpha}{1 - \alpha}}$, which grows larger as $\alpha \to 1$. 

\subsection{Convergence Guarantees for Monotone Operators} 
In this subsection, we focus on the monotone operators~\eqref{eq:monotone}. Here we provide the first analysis for the monotone $1$-symmetric $(L_0, L_1)$-Lipschitz operators.

\begin{theorem}\label{theorem:1symm_monotone}
    Suppose $F$ is monotone and $1$-symmetric $(L_0, L_1)$-Lipschitz operator. Then \algname{EG} with step size $\gamma_k = \omega_k = \frac{\nu}{L_0 + L_1 \|F(x_k)\|}$ satisfy 
    \begin{equation}\label{eq:1symm_monotone}
    \textstyle
        \min_{0 \leq k \leq K} \|F(x_k)\|^2 \leq \frac{2L_0^2 \left( 1 + L_1 \exp{\left( L_1 \|x_0 - x_*\|\right) \|x_0 - x_*\|}\right)^2 \|x_0 - x_*\|^2}{\nu^2(K+1)}.
    \end{equation}
    where $\nu \exp{\nu} = \nicefrac{1}{\sqrt{2}}$.
\end{theorem}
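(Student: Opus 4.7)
The plan is to combine the standard one-step identity for \algname{EG} with the ``exponential-tamed'' Lipschitz bound from Proposition~\ref{prop:equiv_formulation} (the $\alpha=1$ case, i.e.\ equation~\eqref{eq:alpha=1}), and then extract a per-iteration decrease of $\|x_k-x_*\|^2$ that dominates a multiple of $\gamma_k^2\|F(x_k)\|^2$. First I would derive the identity (using $\gamma_k=\omega_k$)
\begin{equation*}
    \|x_{k+1}-x_*\|^2 = \|x_k-x_*\|^2 - 2\gamma_k\la F(\hx_k),\hx_k-x_*\ra + \gamma_k^2\|F(\hx_k)-F(x_k)\|^2 - \gamma_k^2\|F(x_k)\|^2,
\end{equation*}
by adding and subtracting $\hx_k$ in the cross term and recognizing a completed square. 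Monotonicity together with $F(x_*)=0$ kills the inner product term, so I get a clean decrease modulo the ``error'' $\gamma_k^2\|F(\hx_k)-F(x_k)\|^2$.

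The heart of the proof is bounding that error. Using $\hx_k-x_k = -\gamma_k F(x_k)$ in \eqref{eq:alpha=1} gives
\begin{equation*}
    \|F(\hx_k)-F(x_k)\| \le (L_0 + L_1\|F(x_k)\|)\exp\bigl(L_1\gamma_k\|F(x_k)\|\bigr)\gamma_k\|F(x_k)\|.
\end{equation*}
The key observation is that the step size choice $\gamma_k = \nu/(L_0+L_1\|F(x_k)\|)$ exactly satisfies $\gamma_k(L_0+L_1\|F(x_k)\|)=\nu$ and $L_1\gamma_k\|F(x_k)\| \le \nu$, so the whole right-hand side collapses to $\nu\exp(\nu)\|F(x_k)\|$. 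With the calibration $\nu\exp\nu = 1/\sqrt{2}$ this yields $\gamma_k^2\|F(\hx_k)-F(x_k)\|^2 \le \tfrac{1}{2}\gamma_k^2\|F(x_k)\|^2$, giving the descent inequality
\begin{equation*}
    \|x_{k+1}-x_*\|^2 \;\le\; \|x_k-x_*\|^2 - \tfrac{1}{2}\gamma_k^2\|F(x_k)\|^2.
\end{equation*}
In particular $\|x_k-x_*\|$ is non-increasing, hence $\|x_k-x_*\|\le\|x_0-x_*\|$ for all $k$.

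Next I would telescope from $k=0$ to $K$ to obtain $\sum_{k=0}^K \gamma_k^2\|F(x_k)\|^2 \le 2\|x_0-x_*\|^2$, and pass to $\min_k\|F(x_k)\|^2$ by factoring out a uniform lower bound on $\gamma_k$. Such a lower bound requires controlling $\|F(x_k)\|$ uniformly: applying \eqref{eq:alpha=1} between $x_k$ and $x_*$ (and using $F(x_*)=0$) gives
\begin{equation*}
    \|F(x_k)\| \;\le\; L_0\exp\bigl(L_1\|x_k-x_*\|\bigr)\|x_k-x_*\| \;\le\; L_0\exp\bigl(L_1\|x_0-x_*\|\bigr)\|x_0-x_*\|,
\end{equation*}
by monotonicity of $\|x_k-x_*\|$. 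Plugging this into the definition of $\gamma_k$ yields the uniform lower bound $\gamma_k \ge \nu / \bigl[L_0\bigl(1 + L_1\exp(L_1\|x_0-x_*\|)\|x_0-x_*\|\bigr)\bigr]$. Substituting into the telescoped sum and dividing by $K+1$ produces exactly~\eqref{eq:1symm_monotone}.

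\textbf{Anticipated obstacle.} The only delicate step is the bookkeeping that lets the exponential in \eqref{eq:alpha=1} be absorbed into a universal constant: it is the interplay between the adaptive choice $\gamma_k\propto 1/(L_0+L_1\|F(x_k)\|)$ and the appearance of $L_1\|x_k-\hx_k\|$ inside the exponent that makes the argument self-consistent. Verifying $L_1\gamma_k\|F(x_k)\|\le\nu$ and calibrating $\nu$ via $\nu\exp\nu=1/\sqrt 2$ (which has a positive solution since the left-hand side is continuous, zero at $0$, and strictly increasing) is what makes the error term strictly dominated by $\tfrac{1}{2}\gamma_k^2\|F(x_k)\|^2$; everything else is standard \algname{EG} bookkeeping.
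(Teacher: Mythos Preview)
Your proposal is correct and follows essentially the same route as the paper: the same one-step identity (the paper packages it as a lemma yielding~\eqref{eq:monotone_EG}), the same use of~\eqref{eq:alpha=1} on $\|F(\hx_k)-F(x_k)\|$ to obtain $\gamma_k^2\|F(\hx_k)-F(x_k)\|^2\le \nu^2 e^{2\nu}\gamma_k^2\|F(x_k)\|^2$, the same calibration $\nu e^\nu=1/\sqrt{2}$, and the same uniform lower bound on $\gamma_k$ via~\eqref{eq:alpha=1} applied between $x_k$ and $x_*$. The only cosmetic difference is that you state the one-step relation as an exact identity before invoking monotonicity, whereas the paper absorbs monotonicity directly into the lemma.
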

Note that the solution of $\nu \exp{\nu} = \nicefrac{1}{\sqrt{2}}$ is approximately $0.45$. Hence, this result proves sublinear convergence of \algname{EG} when $F$ is monotone. Moreover, \eqref{eq:1symm_monotone} implies, \algname{EG} will  need $K = \mathcal{O} \left( \frac{L_0^2 \|x_0 - x_*\|^2}{\varepsilon} + \frac{L_0^2 L_1^2 \exp{\left(2 L_1 \|x_0 - x_*\| \right) \|x_0 - x_*\|^4}}{\varepsilon} \right)$ iterations to get $\|F(x_k)\|^2 \leq \varepsilon$ for some $k \leq K$. Therefore the convergence rate exponentially depends on $\|x_0 - x_*\|$ when $L_1 > 0$. This shows that $1$-symmetric $(L_0, L_1)$-Lipschitz operators potentially require more iterations of \algname{EG} compared to $L$-Lipschitz operators when initialization $x_0$ is far from $x_*$. However, \eqref{eq:1symm_monotone} recovers the best known dependence on $\|x_0 - x_*\|$ as a special case when $L_1 = 0$, i.e. $F$ is a standard Lipschitz operator \citep{gorbunov2022extragradient}.

Theorem \ref{theorem:1symm_monotone} shows that the \algname{EG}'s convergence rate has an extra term $\exp{(L_1\|x_0 - x_*\|)}$ compared to the results of the Lipschitz setting. One of the intermediate steps in this proof involves an upper bound on $\sum_{k = 0}^K \gamma_k^2 \|F(x_k)\|^2$ (see \eqref{eq:1symm_monotone_eq3} in Appendix \ref{appendix:convergence_analysis}). Then the simple approach is to get a lower bound on $\gamma_k^2$ for all $k$ and derive \eqref{eq:1symm_monotone}. This lower bound on $\gamma_k^2$ involves the $\exp{(L_1\|x_0 - x_*\|)}$ term (see \eqref{eq:1symm_monotone_eq2} in Appendix \ref{appendix:convergence_analysis}) and can be potentially very small. However, it is possible to eliminate this exponential dependence using a refined proof technique.
\begin{theorem}\label{theorem:1symm_monotone_noexp}
    Suppose $F$ is monotone and $1$-symmetric $(L_0, L_1)$-Lipschitz operator. Then \algname{EG} with step size $\gamma_k = \omega_k = \frac{\nu}{L_0 + L_1 \|F(x_k)\|}$ satisfy 
    \begin{equation*}
    \textstyle
        \min_{0 \leq k \leq K} \| F(x_k)\| \leq \frac{\sqrt{2} L_0 \|x_0 - x_*\|}{\nu \sqrt{K+1} - \sqrt{2} L_1 \|x_0 - x_*\|}
    \end{equation*}
    where $\nu \exp{\nu} = \nicefrac{1}{\sqrt{2}}$ and $K+1 \geq \frac{2L_1^2 \|x_0 - x_*\|^2}{\nu^2}$.
\end{theorem}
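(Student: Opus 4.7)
The plan is to follow the same Lyapunov analysis as in Theorem \ref{theorem:1symm_monotone} up to the point of obtaining a telescoping inequality of the form $\sum_{k=0}^{K} \gamma_k^2 \|F(x_k)\|^2 \le 2\|x_0 - x_*\|^2$, and then \emph{avoid} the lossy step of replacing each $\gamma_k^2$ by a uniform lower bound (which is what forces the $\exp(L_1\|x_0-x_*\|)$ factor in \eqref{eq:1symm_monotone}). Instead I will keep the product $\gamma_k^2 \|F(x_k)\|^2$ intact and exploit the monotonicity of the map $r \mapsto r/(L_0 + L_1 r)$.

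First I would write the standard extragradient identity
\begin{equation*}
\|x_{k+1} - x_*\|^2 = \|x_k - x_*\|^2 - \gamma_k^2 \|F(x_k)\|^2 + \gamma_k^2 \|F(\hx_k) - F(x_k)\|^2 - 2\gamma_k \la F(\hx_k), \hx_k - x_*\ra,
\end{equation*}
drop the last term using monotonicity together with $F(x_*)=0$, and then control $\|F(\hx_k)-F(x_k)\|$ by Proposition \ref{prop:equiv_formulation}. The key observation is that the chosen step size gives the uniform bound $L_1 \|x_k - \hx_k\| = L_1 \gamma_k \|F(x_k)\| \le \nu$, since $\gamma_k (L_0 + L_1\|F(x_k)\|) = \nu$ implies $\gamma_k \|F(x_k)\| \le \nu/L_1$. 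Hence $\exp(L_1\|x_k-\hx_k\|) \le \exp(\nu)$, and together with $(L_0+L_1\|F(x_k)\|)\|x_k-\hx_k\| = \nu\|F(x_k)\|$ this yields $\gamma_k \|F(\hx_k)-F(x_k)\| \le \nu\exp(\nu)\,\gamma_k\|F(x_k)\| = \tfrac{1}{\sqrt 2}\gamma_k\|F(x_k)\|$. Plugging this back and telescoping from $k=0$ to $K$ gives exactly $\sum_{k=0}^{K} \gamma_k^2\|F(x_k)\|^2 \le 2\|x_0-x_*\|^2$, with no $\exp(L_1\|x_0-x_*\|)$ anywhere.

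The new ingredient is the way this sum is turned into a minimum bound. Writing $r_k = \|F(x_k)\|$ and $m = \min_{0 \le k \le K} r_k$, the quantity $\gamma_k r_k = \nu\, r_k/(L_0 + L_1 r_k)$ is $\nu$ times the function $g(r) = r/(L_0 + L_1 r)$, which is strictly increasing in $r \ge 0$. Therefore $\gamma_k^2 r_k^2 \ge \nu^2 m^2 / (L_0 + L_1 m)^2$ for every $k$, so
\begin{equation*}
(K+1)\,\frac{\nu^2 m^2}{(L_0 + L_1 m)^2} \;\le\; \sum_{k=0}^{K} \gamma_k^2 \|F(x_k)\|^2 \;\le\; 2\|x_0 - x_*\|^2.
\end{equation*}
Taking square roots and rearranging yields $m(\nu\sqrt{K+1} - \sqrt 2\, L_1 \|x_0-x_*\|) \le \sqrt 2\, L_0 \|x_0-x_*\|$, which gives the claimed bound provided the denominator is positive, i.e.\ $K+1 \ge 2L_1^2 \|x_0-x_*\|^2/\nu^2$.

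The only real obstacle is spotting the right way to use the step size twice: once to turn Proposition \ref{prop:equiv_formulation} into a constant-factor contraction (replacing the path-dependent $\exp(L_1\|\cdot\|)$ by the harmless $\exp(\nu)$), and once, through the monotonicity of $r \mapsto r/(L_0+L_1 r)$, to extract $m$ from the summed potential without ever lower-bounding $\gamma_k$ individually. Everything else is routine algebra, and no part of the argument introduces a worst-case sup over iterates, which is what kept $\exp(L_1\|x_0-x_*\|)$ out of the final rate.
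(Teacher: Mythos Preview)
Your proposal is correct and follows essentially the same route as the paper: derive the telescoping bound $\sum_{k=0}^K \gamma_k^2\|F(x_k)\|^2 \le 2\|x_0-x_*\|^2$ exactly as in Theorem~\ref{theorem:1symm_monotone}, then exploit that $\gamma_k\|F(x_k)\| = \nu\,r_k/(L_0+L_1 r_k)$ is an increasing function of $r_k$ to pass to $\min_k\|F(x_k)\|$ without ever lower-bounding $\gamma_k$ on its own. The paper phrases the last step slightly differently---it takes the $k_0$ achieving $\min_k \gamma_k\|F(x_k)\|$, rearranges to bound $\|F(x_{k_0})\|$, and then uses $\min_k\|F(x_k)\|\le\|F(x_{k_0})\|$---but this is the same argument in disguise, since the monotonicity of $r\mapsto r/(L_0+L_1 r)$ is exactly what makes the two minimizers coincide.
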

Note that to obtain this convergence guarantee, a sufficiently large number of iterations is required, specifically $K+1 \geq \nicefrac{\left(2L_1^2 \|x_0 - x_*\|^2 \right)}{\nu^2}$. \cite{gorbunov2024methods} employed a similar proof technique to eliminate the exponential dependence on the initial distance $\exp(L_1\|x_0 - x_*\|)$ in the context of the Adaptive Gradient method.

Next we state our result for $\alpha$-symmetric $(L_0, L_1)$-Lipschitz monotone operator with $\alpha \in (0, 1).$
\begin{theorem}\label{theorem:alpha01}
    Suppose $F$ is monotone and $\alpha$-symmetric $(L_0, L_1)$-Lipschitz operator with $\alpha \in (0, 1)$. Then \algname{EG} with $\gamma_k = \omega_k = \frac{1}{2 \sqrt{2} K_0  + \left( 2\sqrt{2} K_1 + 2^{\nicefrac{3 (1 - \alpha)}{2}} K_2^{1 - \alpha} \right) \|F(x_k)\|^{\alpha}}$ satisfy 
    \begin{eqnarray*}
    \textstyle
        \min_{0 \leq k \leq K} \|F(x_k)\|^2 \leq \frac{16 \left( K_0 + (K_1 + 2^{\nicefrac{-3}{2}} K_2^{1 - \alpha}) (K_0 + K_2 \|x_0 - x_*\|^{\nicefrac{\alpha}{1 - \alpha}})^{\alpha}\|x_0 - x_*\|^{\alpha}\right)^2 \|x_0 - x_*\|^2}{K+1}.
    \end{eqnarray*}
\end{theorem}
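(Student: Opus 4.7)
The plan is to follow the standard extragradient energy-descent template, adapted to the $\alpha$-symmetric $(L_0,L_1)$-Lipschitz regime via \Cref{prop:equiv_formulation}. First, I would derive the classical one-step EG inequality: expanding $\|x_{k+1}-x_*\|^2$ from $x_{k+1}=x_k-\gamma_k F(\hx_k)$, writing $x_k-x_*=(\hx_k-x_*)+\gamma_k F(x_k)$, dropping the nonnegative term $2\gamma_k\la F(\hx_k),\hx_k-x_*\ra\geq 0$ via monotonicity, and applying the identity $-2\la a,b\ra=\|a-b\|^2-\|a\|^2-\|b\|^2$ with $a=F(\hx_k),\,b=F(x_k)$. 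This yields
\[
\|x_{k+1}-x_*\|^2\leq\|x_k-x_*\|^2-\gamma_k^2\|F(x_k)\|^2+\gamma_k^2\|F(\hx_k)-F(x_k)\|^2,
\]
which, once the step-size-specific bound on $\|F(\hx_k)-F(x_k)\|$ is in place, will drive the descent.

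Next, I would apply \Cref{prop:equiv_formulation} to the pair $(x_k,\hx_k)$; since $\|x_k-\hx_k\|=\gamma_k\|F(x_k)\|$, this gives
\[
\|F(\hx_k)-F(x_k)\|\leq\bigl[\gamma_k(K_0+K_1\|F(x_k)\|^\alpha)+K_2\gamma_k^{1/(1-\alpha)}\|F(x_k)\|^{\alpha/(1-\alpha)}\bigr]\|F(x_k)\|.
\]
The prescribed step size is crafted so that each of the two bracket summands is at most $2^{-3/2}=1/(2\sqrt 2)$: the denominator of $\gamma_k$ dominates $2\sqrt 2(K_0+K_1\|F(x_k)\|^\alpha)$ as a block, which immediately bounds the first summand by $1/(2\sqrt 2)$; and it also dominates $2^{3(1-\alpha)/2}K_2^{1-\alpha}\|F(x_k)\|^\alpha$ alone, so raising $\gamma_k$ to the power $1/(1-\alpha)$ collapses the second summand to $K_2\gamma_k^{1/(1-\alpha)}\|F(x_k)\|^{\alpha/(1-\alpha)}\leq 2^{-3/2}$. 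Adding the two yields $\|F(\hx_k)-F(x_k)\|\leq\|F(x_k)\|/\sqrt 2$, and plugging this back into the step-1 inequality produces the clean one-step descent
\[
\|x_{k+1}-x_*\|^2\leq\|x_k-x_*\|^2-\tfrac{1}{2}\gamma_k^2\|F(x_k)\|^2.
\]

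The remainder is bookkeeping. Telescoping gives $\sum_{k=0}^K\gamma_k^2\|F(x_k)\|^2\leq 2\|x_0-x_*\|^2$, and the descent inequality also ensures $\|x_k-x_*\|\leq\|x_0-x_*\|$ for all $k$ by induction. To turn the weighted sum into a bound on $\min_k\|F(x_k)\|^2$, I need a uniform lower bound on $\gamma_k$, which requires an a priori upper bound on $\|F(x_k)\|$. Applying \Cref{prop:equiv_formulation} with the two arguments interchanged and the second set to $x_*$ (so that the $K_1\|F(\cdot)\|^\alpha$ summand vanishes, using $F(x_*)=0$) yields $\|F(x_k)\|\leq(K_0+K_2\|x_k-x_*\|^{\alpha/(1-\alpha)})\|x_k-x_*\|\leq M$, where $M\eqdef(K_0+K_2\|x_0-x_*\|^{\alpha/(1-\alpha)})\|x_0-x_*\|$. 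Plugging $\|F(x_k)\|\leq M$ into the step-size formula gives $\gamma_k\geq\gamma_{\min}\eqdef[2\sqrt 2K_0+(2\sqrt 2K_1+2^{3(1-\alpha)/2}K_2^{1-\alpha})M^\alpha]^{-1}$, and the trivial inequality $\gamma_{\min}^2(K+1)\min_k\|F(x_k)\|^2\leq\sum_k\gamma_k^2\|F(x_k)\|^2$ delivers the $\mathcal{O}(1/(K+1))$ rate after substituting $\gamma_{\min}^{-1}$ and $M$.

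The main obstacle in this plan is the delicate coefficient-matching in the second step. Bounding the three terms coming from \Cref{prop:equiv_formulation} separately against the three terms of the denominator of $\gamma_k$ gives an aggregate of $3/(2\sqrt 2)>1$, which is useless because it fails to produce contraction. The fix is to bundle the two pieces $K_0$ and $K_1\|F(x_k)\|^\alpha$ jointly against $2\sqrt 2K_0+2\sqrt 2K_1\|F(x_k)\|^\alpha$, so that their ratio (not the sum of two ratios) is bounded by $1/(2\sqrt 2)$, leaving the third denominator piece $2^{3(1-\alpha)/2}K_2^{1-\alpha}\|F(x_k)\|^\alpha$ free to control the summand carrying the fractional exponent $1/(1-\alpha)$; the seemingly ad hoc constant $2^{3(1-\alpha)/2}$ is chosen precisely so that $\gamma_k^{1/(1-\alpha)}$ contributes the $\alpha$-independent factor $2^{-3/2}$.
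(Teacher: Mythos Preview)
Your proposal is correct and follows essentially the same route as the paper's proof: the identical one-step monotone EG inequality, the same application of \Cref{prop:equiv_formulation} with the two-summand split $\gamma_k(K_0+K_1\|F(x_k)\|^\alpha)\le 2^{-3/2}$ and $K_2\gamma_k^{1/(1-\alpha)}\|F(x_k)\|^{\alpha/(1-\alpha)}\le 2^{-3/2}$ to obtain $\|x_{k+1}-x_*\|^2\le\|x_k-x_*\|^2-\tfrac12\gamma_k^2\|F(x_k)\|^2$, and the same telescoping plus uniform step-size lower bound via $\|F(x_k)\|\le(K_0+K_2\|x_0-x_*\|^{\alpha/(1-\alpha)})\|x_0-x_*\|$. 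Your explicit remark that the roles in \Cref{prop:equiv_formulation} must be swapped (so the anchor point is $x_*$ and the $K_1$ term vanishes) is in fact more careful than the paper's own write-up of that step.
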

This theorem establishes a sublinear convergence rate for $\alpha \in (0, 1)$. In the special case where $L_1 = 0$ (i.e., the standard $L$-Lipschitz setting), we have $K_1 = K_2 = 0$ by Proposition~\ref{prop:equiv_formulation}. Thus, our result recovers the best-known rate $\mathcal{O} \left( \nicefrac{L_0^2 \|x_0 - x_*\|^2}{K+1} \right)$ from~\citet{gorbunov2022extragradient}. On the other hand, when $L_1 > 0$, we obtain a convergence rate of $\mathcal{O} \left( \nicefrac{\|x_0 - x_*\|^{\frac{2 + 4 \alpha - 2\alpha^2}{1 - \alpha}}}{K+1}\right)$. Furthermore, as $\alpha \to 0$—which corresponds again to the $L$-Lipschitz setting—our step sizes $\gamma_k$ and $\omega_k$ become constant, and we recover the standard convergence rate $\mathcal{O} \left( \nicefrac{\|x_0 - x_*\|^2}{K+1}\right)$. This matches the classical result for monotone $L$-Lipschitz operators up to constants, emphasizing the tightness of our analysis.

\subsection{Local Convergence Guarantees for Weak Minty Operators}
Beyond the monotone operators, it is also possible to provide convergence for weak Minty operators~\eqref{eq:weak_minty} under some restrictions on $\rho > 0$. In contrast to the monotone problems where we used the same extrapolation and update step $\gamma_k, \omega_k$, here we use smaller update step size $\omega_k$. Specifically, we employ $\omega_k = \nicefrac{\gamma_k}{2}$, similar to \citet{diakonikolas2021efficient} for handling weak Minty $L$-Lipschitz operators. 
\begin{theorem}\label{theorem:weak_minty_alpha1}
     Suppose $F$ is weak Minty and $1$-symmetric $(L_0, L_1)$-Lipschitz assumption. Moreover we assume 
    \begin{equation}\label{eq:restriction_rho}
    \textstyle
        \Delta_1 \eqdef \frac{\nu}{L_0 \left( 1 + L_1 \|x_0 - x_*\| e^{L_1 \|x_0 - x_*\|}\right)} - 4 \rho > 0.
    \end{equation} Then \algname{EG} with step size $\gamma_k = \frac{\nu}{L_0 + L_1 \|F(x_k)\|}$ and $\omega_k = \nicefrac{\gamma_k}{2}$ satisfies
    \begin{equation}\label{eq:monotone_asymmetric}
    \textstyle
        \min_{0 \leq k \leq K} \|F(\hx_k)\|^2 \leq \frac{4 L_0 \left( 1 + L_1 \exp{\left( L_1 \|x_0 - x_*\|\right) \|x_0 - x_*\|}\right) \|x_0 - x_*\|^2}{\nu \Delta_1 (K+1)} 
    \end{equation}
    where $\nu \exp{\nu} = 1$.
\end{theorem}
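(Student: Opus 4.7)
\textbf{Proof plan for Theorem \ref{theorem:weak_minty_alpha1}.} The plan is to run the standard one-step descent argument for \algname{EG}, but with the step size $\gamma_k$ now depending on $\|F(x_k)\|$, and then to bootstrap an inductive bound on $\|x_k-x_*\|$ to control $\gamma_k$ uniformly. I would start by expanding
\begin{equation*}
\|x_{k+1}-x_*\|^2 = \|x_k-x_*\|^2 - 2\omega_k\la F(\hx_k), x_k-x_*\ra + \omega_k^2\|F(\hx_k)\|^2,
\end{equation*}
split the inner product as $\la F(\hx_k), \hx_k - x_*\ra + \la F(\hx_k), x_k-\hx_k\ra$, apply the weak Minty inequality at $\hx_k$ to the first term and substitute $x_k-\hx_k=\gamma_k F(x_k)$ in the second, then use the identity $2\la F(\hx_k), F(x_k)\ra = \|F(\hx_k)\|^2 + \|F(x_k)\|^2 - \|F(x_k)-F(\hx_k)\|^2$.

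The next step, which is the crucial cancellation, is to bound $\|F(x_k)-F(\hx_k)\|^2$ via Proposition \ref{prop:equiv_formulation} with $\alpha=1$. Since $\|x_k-\hx_k\|=\gamma_k\|F(x_k)\|$ and $\gamma_k(L_0+L_1\|F(x_k)\|)=\nu$, applying \eqref{eq:alpha=1} gives
\begin{equation*}
\|F(x_k)-F(\hx_k)\|\leq \nu\, e^{L_1\gamma_k\|F(x_k)\|}\|F(x_k)\|\leq \nu e^{\nu}\|F(x_k)\|,
\end{equation*}
using $L_1\gamma_k\|F(x_k)\|\leq\nu$. With the choice $\nu e^{\nu}=1$, the $\|F(x_k)\|^2$ terms cancel exactly, leaving
\begin{equation*}
\|x_{k+1}-x_*\|^2\leq \|x_k-x_*\|^2 + \gamma_k\!\left(\rho-\tfrac{\gamma_k}{4}\right)\|F(\hx_k)\|^2,
\end{equation*}
after substituting $\omega_k=\gamma_k/2$ and simplifying $2\omega_k\rho-\omega_k\gamma_k+\omega_k^2=\gamma_k\rho-\gamma_k^2/4$.

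The main obstacle is now showing that this coefficient is uniformly negative, which requires a lower bound on $\gamma_k$, and hence an upper bound on $\|F(x_k)\|$, and hence a bound on $\|x_k-x_*\|$. I would close this loop by induction: assuming $\|x_j-x_*\|\leq\|x_0-x_*\|$ for all $j\leq k$, the reverse direction of Proposition \ref{prop:equiv_formulation} applied at $y=x_*$ (swapping the roles of $x$ and $y$ so that the bound depends on $\|F(x_*)\|=0$) yields $\|F(x_k)\|\leq L_0 e^{L_1\|x_0-x_*\|}\|x_0-x_*\|$, and therefore $\gamma_k\geq \nicefrac{\nu}{L_0(1+L_1 e^{L_1\|x_0-x_*\|}\|x_0-x_*\|)}$. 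The assumption \eqref{eq:restriction_rho} then gives $\gamma_k-4\rho\geq\Delta_1$, so $\gamma_k(\rho-\gamma_k/4)\leq -\gamma_k\Delta_1/4\leq 0$, which preserves $\|x_{k+1}-x_*\|\leq\|x_k-x_*\|$ and closes the induction.

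Finally, with the uniform lower bound $\gamma_k\geq \gamma_{\min}\eqdef \nicefrac{\nu}{L_0(1+L_1 e^{L_1\|x_0-x_*\|}\|x_0-x_*\|)}$, telescoping the inequality $\|x_{k+1}-x_*\|^2\leq\|x_k-x_*\|^2-\tfrac{\gamma_k\Delta_1}{4}\|F(\hx_k)\|^2$ from $k=0$ to $K$ gives
\begin{equation*}
\frac{\gamma_{\min}\Delta_1}{4}\sum_{k=0}^{K}\|F(\hx_k)\|^2\leq \|x_0-x_*\|^2,
\end{equation*}
and bounding the minimum by the average produces exactly \eqref{eq:monotone_asymmetric}. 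I expect the chief delicate point to be the bootstrapping step: one must carefully invoke the symmetric form of Proposition \ref{prop:equiv_formulation} at $x_*$ (to avoid the self-referential bound that arises from the forward direction) and keep the induction hypothesis aligned with the step-size lower bound throughout.
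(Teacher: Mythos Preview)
Your proposal is correct and follows essentially the same route as the paper: expand $\|x_{k+1}-x_*\|^2$, split the inner product, apply weak Minty at $\hx_k$, use the polarization identity, bound $\|F(x_k)-F(\hx_k)\|$ via Proposition~\ref{prop:equiv_formulation} so that $\nu e^{\nu}=1$ kills the $\|F(x_k)\|^2$ term exactly, then lower-bound $\gamma_k$ through $\|F(x_k)\|\leq L_0 e^{L_1\|x_0-x_*\|}\|x_0-x_*\|$ and telescope. Your explicit induction on $\|x_j-x_*\|\leq\|x_0-x_*\|$ to break the circularity between the descent inequality and the step-size lower bound is in fact more careful than the paper, which simply refers back to the monotone argument without addressing that here descent itself requires $\gamma_k-4\rho\geq 0$.
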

To the best of our knowledge, this is the first result establishing convergence guarantees for weak Minty, $\alpha$-symmetric $(L_0, L_1)$-Lipschitz operators. Similar to the monotone case, we obtain a sublinear convergence rate for weak Minty operators. However, the condition in~\eqref{eq:restriction_rho} indicates that the initialization point $x_0$ must be sufficiently close to the solution $x_*$ in order to ensure convergence. Consequently, Theorem~\ref{theorem:weak_minty_alpha1} only provides a local convergence guarantee.

In the special case where $L_1 = 0$, i.e., the standard $L$-Lipschitz setting, condition~\eqref{eq:restriction_rho} reduces to the simpler requirement $\rho < \nicefrac{\nu}{4L_0}$. Similar assumptions on $\rho$ have been made in prior works such as~\citet{diakonikolas2021efficient} and~\citet{pethick2023escaping} for the $L$-Lipschitz weak Minty setting. Finally, we extend our analysis to the case $\alpha \in (0, 1)$, and present a corresponding theorem establishing sublinear convergence under analogous restrictions on $\rho$.

\begin{theorem}\label{theorem:weak_minty_alpha01}
    Suppose $F$ is weak Minty and $\alpha$-symmetric $(L_0, L_1)$-Lipschitz operator with $\alpha \in (0, 1)$. Moreover we assume 
    \begin{equation}\label{eq:restriction_rho_alpha}
    \textstyle
        \Delta_{\alpha} \eqdef \frac{1}{2 \sqrt{2} K_0 + 2 \sqrt{2} (K_1 + 2^{\nicefrac{-3}{2}} K_2^{1 - \alpha}) (K_0 + K_2 \|x_0 - x_*\|^{\nicefrac{\alpha}{1 - \alpha}})^{\alpha} \|x_0 - x_*\|^{\alpha}} - 4 \rho > 0.
    \end{equation} Then \algname{EG} with step size $\gamma_k = \frac{1}{2 \sqrt{2} K_0 + \left(2 \sqrt{2}K_1 + 2^{\nicefrac{3 (1 - \alpha)}{2}} K_2^{1 - \alpha} \right) \|F(x_k)\|^{\alpha}}$ and $\omega_k = \nicefrac{\gamma_k}{2}$ satisfy 
    \begin{eqnarray*}
    \textstyle
        \min_{0 \leq k \leq K} \|F(\hx_k)\|^2 \leq \frac{4 \left( K_0 + \left(K_1 + 2^{\nicefrac{-3}{2}} K_2^{1 - \alpha} \right) (K_0 + K_2 \|x_0 - x_*\|^{\nicefrac{\alpha}{1 - \alpha}})^{\alpha} \|x_0 - x_*\|^{\alpha}\right) \|x_0 - x_*\|^2}{\Delta_{\alpha}(K+1)}.
    \end{eqnarray*}
\end{theorem}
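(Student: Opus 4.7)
}
The plan is to mirror the monotone analysis from Theorem~\ref{theorem:alpha01} but with the half-step choice $\omega_k=\gamma_k/2$ (as in~\citet{diakonikolas2021efficient}) so that the standard weak-Minty cancellation works, and then to control everything through the $\alpha\in(0,1)$ form of Proposition~\ref{prop:equiv_formulation}. First, I would expand
$\|x_{k+1}-x_*\|^2=\|x_k-x_*\|^2-2\omega_k\langle F(\hat x_k),x_k-x_*\rangle+\omega_k^2\|F(\hat x_k)\|^2$,
split $x_k-x_*=(\hat x_k-x_*)+\gamma_k F(x_k)$, and use the weak-Minty inequality at $\hat x_k$ to replace $\langle F(\hat x_k),\hat x_k-x_*\rangle$ by $-\rho\|F(\hat x_k)\|^2$. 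With $\omega_k=\gamma_k/2$, the cross term $\gamma_k\langle F(\hat x_k),F(x_k)\rangle$ combines with $\omega_k^2\|F(\hat x_k)\|^2$ and can be rewritten using the polarization identity so that the dominating positive term becomes $\tfrac{\gamma_k^2}{2}\|F(x_k)-F(\hat x_k)\|^2-\tfrac{\gamma_k^2}{2}\|F(x_k)\|^2$.

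Next, I would invoke Proposition~\ref{prop:equiv_formulation} with $y=\hat x_k$, so that $\|x_k-\hat x_k\|=\gamma_k\|F(x_k)\|$ yields
\[
\|F(x_k)-F(\hat x_k)\|\le\bigl(K_0+K_1\|F(x_k)\|^{\alpha}+K_2\gamma_k^{\alpha/(1-\alpha)}\|F(x_k)\|^{\alpha/(1-\alpha)}\bigr)\gamma_k\|F(x_k)\|.
\]
The prescribed step size $\gamma_k=\bigl(2\sqrt2 K_0+(2\sqrt2 K_1+2^{3(1-\alpha)/2}K_2^{1-\alpha})\|F(x_k)\|^{\alpha}\bigr)^{-1}$ is engineered exactly so that the two inner additive pieces each contribute at most $\tfrac{1}{2\sqrt 2}\|F(x_k)\|$ (for the $K_2$-term this uses that $\gamma_k\le 2^{-3(1-\alpha)/2}K_2^{\alpha-1}\|F(x_k)\|^{-\alpha}$, whence $K_2\gamma_k^{1/(1-\alpha)}\|F(x_k)\|^{\alpha/(1-\alpha)}\le 2^{-3/2}$). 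Therefore $\gamma_k\|F(x_k)-F(\hat x_k)\|\le\tfrac{1}{\sqrt 2}\|F(x_k)\|$, so $\tfrac{\gamma_k^2}{2}\|F(x_k)-F(\hat x_k)\|^2\le\tfrac{\gamma_k^2}{4}\|F(x_k)\|^2$ and the $\|F(x_k)\|^2$ terms cancel down to produce the clean descent
\[
\|x_{k+1}-x_*\|^2\le\|x_k-x_*\|^2-\bigl(\tfrac{\gamma_k}{2}-2\rho\bigr)\tfrac{\gamma_k}{2}\|F(\hat x_k)\|^2,
\]
after a further application of $\|F(\hat x_k)\|^2\le 2\|F(x_k)\|^2+2\|F(x_k)-F(\hat x_k)\|^2$ to bound residuals from the right-hand side.

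With this recursion in hand, I would prove by induction that $\|x_k-x_*\|\le\|x_0-x_*\|$ for every $k$. The base case is trivial, and the inductive step needs the coefficient $\tfrac{\gamma_k}{2}-2\rho$ to remain nonnegative. Under the inductive hypothesis, $\|F(x_k)\|\le(K_0+K_2\|x_0-x_*\|^{\alpha/(1-\alpha)})\|x_0-x_*\|^{\alpha}\cdot\|F(x_k)\|^{\,0\text{-exponent handled via Prop.~\ref{prop:equiv_formulation}}}$; more precisely, applying Proposition~\ref{prop:equiv_formulation} with $y=x_*$ gives an upper bound on $\|F(x_k)\|^{\alpha}$ in terms of $\|x_0-x_*\|$, which feeds into a uniform lower bound on $\gamma_k$ exactly matching the denominator in $\Delta_\alpha$ of~\eqref{eq:restriction_rho_alpha}. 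Hence the assumption $\Delta_\alpha>0$ forces $\tfrac{\gamma_k}{2}-2\rho\ge\tfrac{\Delta_\alpha}{2}>0$ for all $k$, closing the induction.

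Finally, telescoping the descent from $k=0$ to $K$ and using $\|x_{K+1}-x_*\|^2\ge 0$ yields $\sum_{k=0}^{K}\|F(\hat x_k)\|^2\le\tfrac{2\|x_0-x_*\|^2}{\gamma_{\min}(\gamma_{\min}/2-2\rho)}$, where $\gamma_{\min}$ is the uniform lower bound identified above; converting this to a $\min$-type bound and plugging in $\gamma_{\min}$ produces the stated rate. The main obstacle is the induction step: because $\gamma_k$ itself depends on $\|F(x_k)\|$, one must simultaneously ensure the iterates remain inside the ball of radius $\|x_0-x_*\|$ and that the step size does not shrink below $4\rho$, and these two requirements couple exactly through~\eqref{eq:restriction_rho_alpha}. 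The rest of the argument is book-keeping with the constants $K_0,K_1,K_2$ from Proposition~\ref{prop:equiv_formulation}.
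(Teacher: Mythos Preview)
Your approach is the same as the paper's: expand with $\omega_k=\gamma_k/2$, apply weak Minty at $\hat x_k$, rewrite $\langle F(\hat x_k),F(x_k)\rangle$ via polarization, control $\|F(x_k)-F(\hat x_k)\|$ through Proposition~\ref{prop:equiv_formulation} so that the step-size choice kills the $\|F(x_k)\|^2$ coefficient, then lower-bound $\gamma_k$ via the induction $\|x_k-x_*\|\le\|x_0-x_*\|$ and telescope. One clean-up: after polarization the term $-\tfrac{\gamma_k^2}{4}\|F(\hat x_k)\|^2$ is already present (since $\omega_k^2-\tfrac{\gamma_k^2}{2}=-\tfrac{\gamma_k^2}{4}$), so the descent $\|x_{k+1}-x_*\|^2\le\|x_k-x_*\|^2-\tfrac{\gamma_k}{4}(\gamma_k-4\rho)\|F(\hat x_k)\|^2$ follows by simply discarding the leftover $-\tfrac{\gamma_k^2}{4}\|F(x_k)\|^2$; your ``further application'' of $\|F(\hat x_k)\|^2\le 2\|F(x_k)\|^2+2\|F(x_k)-F(\hat x_k)\|^2$ is unnecessary (and that inequality would go the wrong direction for a term carrying a negative coefficient).
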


\section{Numerical Experiments}\label{sec:experiments}

In this section, we conduct experiments to validate the efficiency of our proposed step size strategy $\gamma_k = \frac{1}{c_0 + c_1 \|F(x_k)\|^{\alpha}}$ with $\alpha = 1$. In the first experiment, we compare our step size choice with that of \citet{pmlr-v235-vankov24a} on a strongly monotone problem, and in the second experiment, we make a comparison with the constant step size strategy for solving a monotone problem. Finally, we evaluate our scheme for solving the GlobalForsaken problem from \citet{pethick2023escaping}. All experiments in this work were conducted using a personal MacBook with an Apple M3 chip and 16GB of RAM. We provide the code for all
of our experiments at \href{https://github.com/isayantan/L0L1extragradient}{https://github.com/isayantan/L0L1extragradient}.

\begin{figure}[h]
\centering
\begin{subfigure}[b]{0.46\textwidth}
    \centering
    \includegraphics[width=.9\textwidth]{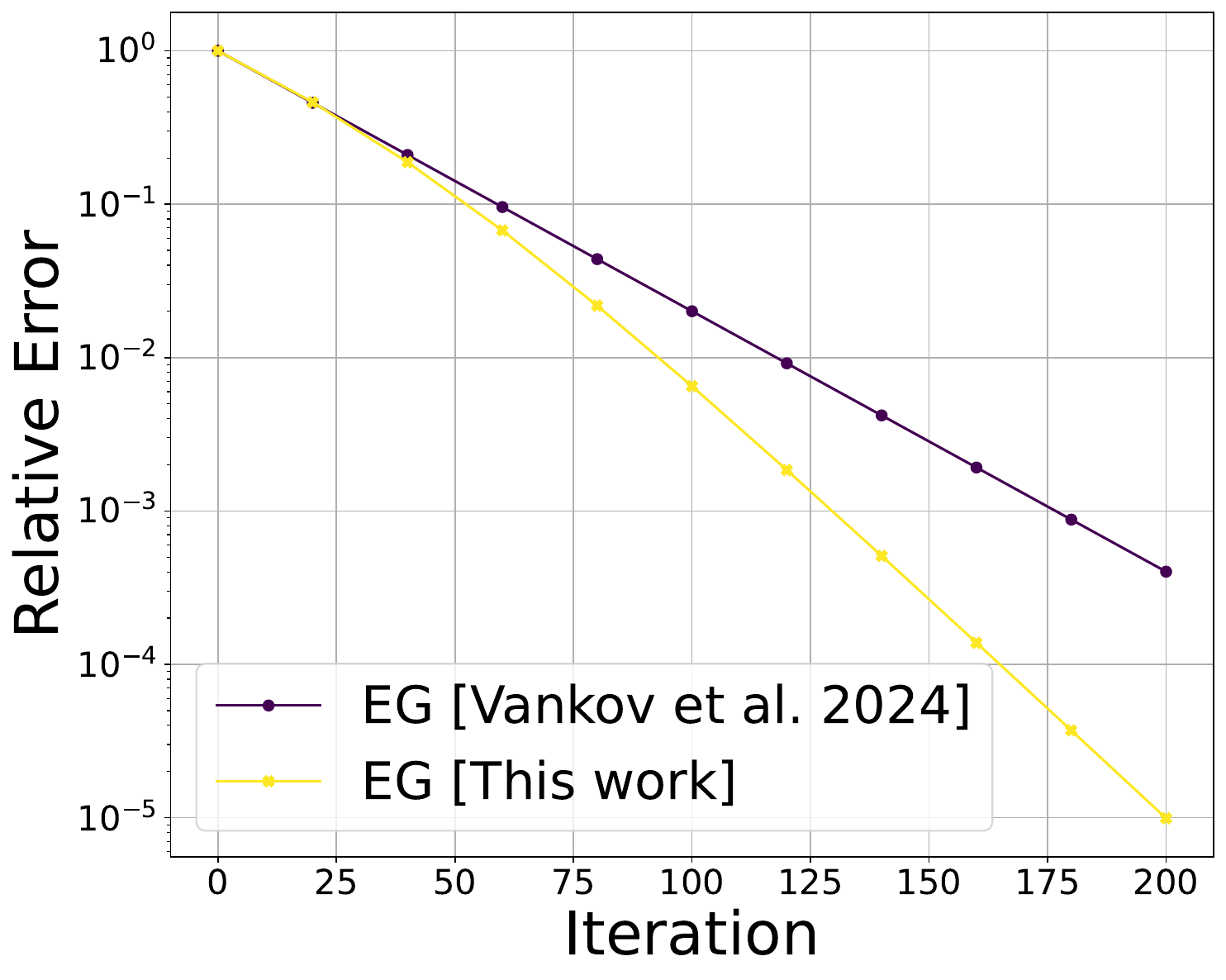}
    \caption{}\label{fig:L0L1comparison_opt_dist}
\end{subfigure}
\hfill
\begin{subfigure}[b]{0.46\textwidth}
    \centering
    \includegraphics[width=.9\textwidth]{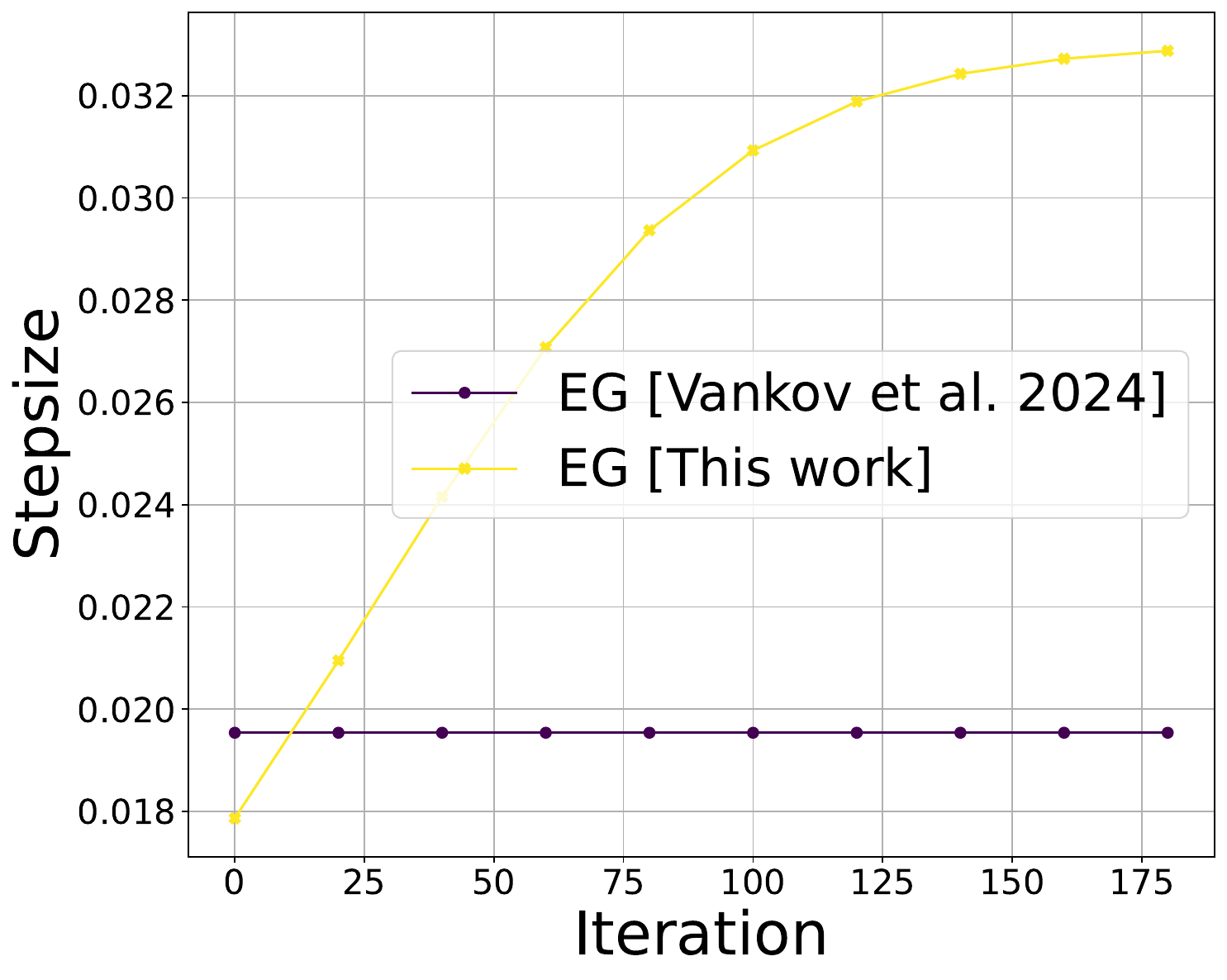}
    \caption{}\label{fig:L0L1comparison_stepsize}
\end{subfigure}

    \caption{\small In Figures~\ref{fig:L0L1comparison_opt_dist} and~\ref{fig:L0L1comparison_stepsize}, we compare our proposed adaptive step size strategy with that of \citet{pmlr-v235-vankov24a}. We report the relative error and the magnitude of the step size over iterations.
    }\label{fig:monotone_and_vankov}
\end{figure}

\begin{figure}[h]

\begin{subfigure}[b]{0.46\textwidth}
    \centering
    \includegraphics[width=.9\textwidth]{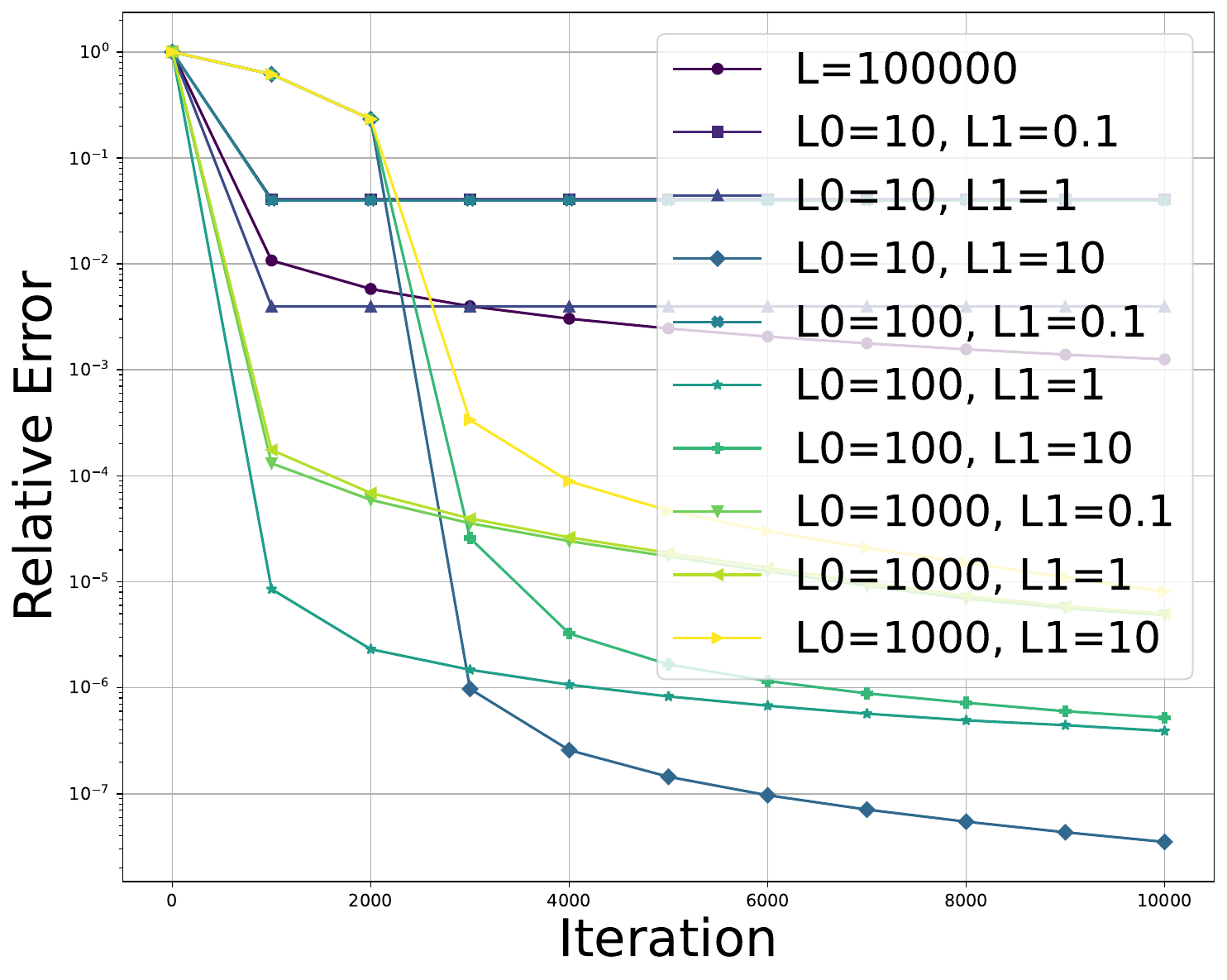}
    \caption{}\label{fig:monotone_cubic_relative_error}
\end{subfigure}
\hfill
\begin{subfigure}[b]{0.46\textwidth}
    \centering
    \includegraphics[width=.9\textwidth]{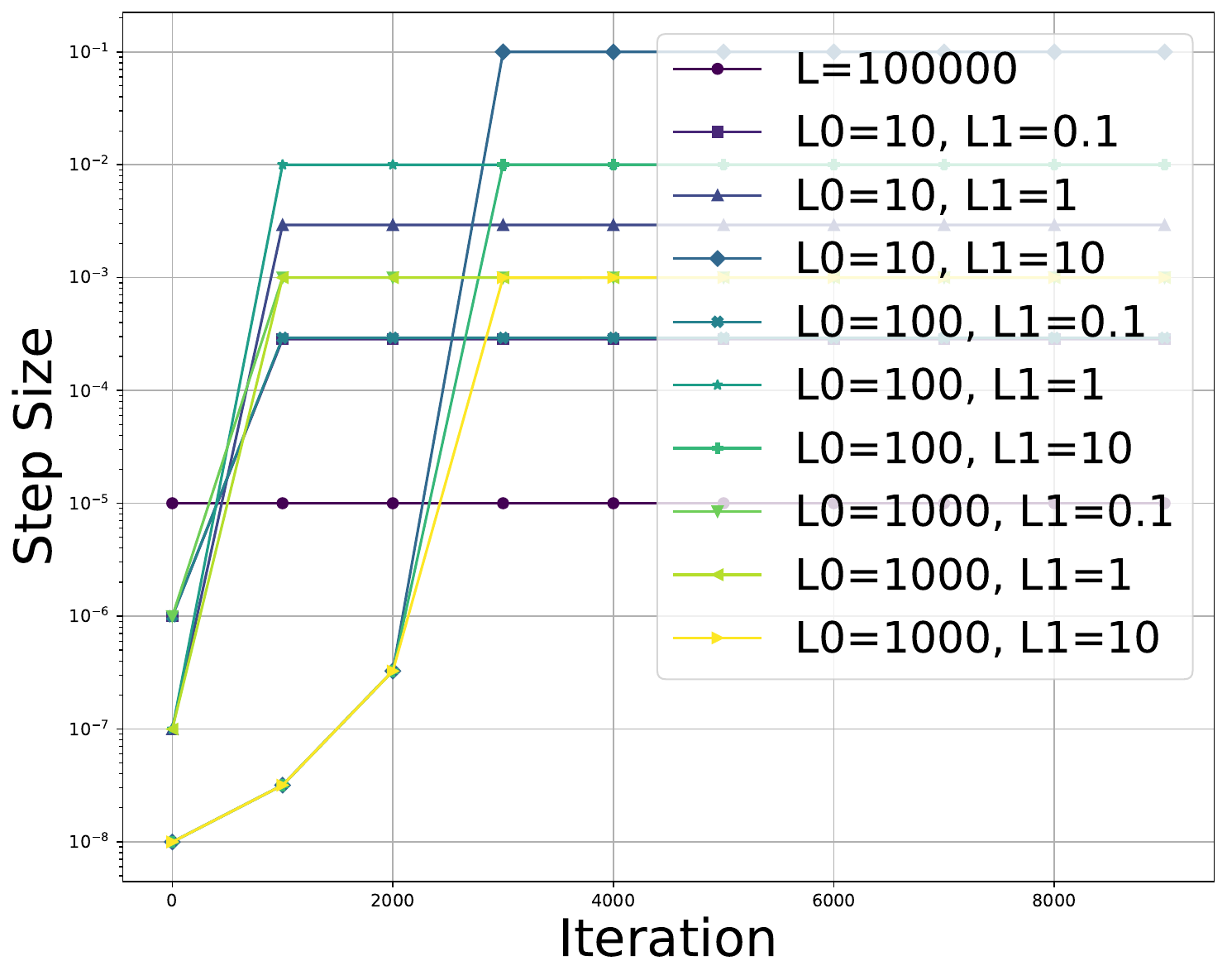}
    \caption{}\label{fig:monotone_cubic_step_size}
\end{subfigure}
    \caption{\small In Figures~\ref{fig:monotone_cubic_relative_error} and~\ref{fig:monotone_cubic_step_size}, we evaluate the performance of the \algname{EG} method on the problem in~\eqref{eq:min_max_cubic_Rd}, using both a constant step size and the $(L_0, L_1)$-adaptive step size. We report the relative error and the magnitude of the step size over iterations.
    }\label{fig:monotone_and_vankov}
\end{figure}

\textbf{Performance on a Strongly Monotone Problem.}
In this experiment, we compare our theoretical step sizes with those from \cite{pmlr-v235-vankov24a}. Here, we implement \algname{EG} for solving the operator $F(x) = (\text{sign}\left(u_1 \right) \left|u_1 \right| + u_2, \text{sign}\left(u_2 \right) \left|u_2 \right| - u_1)$.
This problem has constants $L_0 = 1 + 2 \sqrt{2}$ and $L_1 = 2\sqrt{2}$. For our method, we use $\gamma_k = \omega_k =  \frac{\nu}{L_0 + L_1 \|F(x_k)\|}$ while for \algname{EG}~\citep{pmlr-v235-vankov24a} we use stepsize $\gamma_k = \omega_k = \min \left\{ \frac{1}{4 \mu}, \frac{1}{2 \sqrt{2}e L_0}, \frac{1}{2 \sqrt{2}e L_1 \|F(x_k)\|} \right\}$. In Figure \ref{fig:L0L1comparison_opt_dist}, we plot the relative error $\frac{\|x_k - x_*\|^2}{\|x_0 - x_*\|^2}$ on the $y$-axis while number of iterations on the $x$-axis. We find that our proposed step size outperforms that of \cite{pmlr-v235-vankov24a}. Moreover, in Figure \ref{fig:L0L1comparison_stepsize}, we compare the magnitude of the step size and how it evolves over the iterations. We find that the step size of \cite{pmlr-v235-vankov24a} remains constant at approximately $0.02$, whereas our proposed step size increases to a value larger than $0.032$. These experiments highlight the efficiency of our proposed step size. 

\textbf{Performance on a Monotone Problem.} Here we consider the following min-max optimization problem
\begin{eqnarray}\label{eq:min_max_cubic_Rd}
\textstyle
    \min_{w_1 \in \R^d} \max_{w_2 \in \R^d} \mathcal{L}(w_1, w_2) = \frac{1}{3} \left( w_1^{\top} \A w_1 \right)^{\nicefrac{3}{2}} + w_1^{\top} \B w_2 - \frac{1}{3} \left(w_2^{\top} \C w_2 \right)^{\nicefrac{3}{2}}.
\end{eqnarray}
where $\A, \B, \C \in \R^{d \times d}$ are positive definite matrices. Note that, when $d = 1$, and $\A, \B, \C$ are just scalars equal to $1$, this problem reduces to \eqref{eq:min_max_cubic}.
The corresponding operator of this problem is given by 
\begin{equation*}
\textstyle
    F(x)  
    = \begin{bmatrix}
        \nabla_{w_1} \mathcal{L}(w_1, w_2) \\
        - \nabla_{w_2} \mathcal{L}(w_1, w_2)
    \end{bmatrix}
    = \begin{bmatrix}
        \left( w_1^{\top} \A w_1\right)^{\nicefrac{1}{2}} \A w_1 + \B w_2 \\
        \left( w_2^{\top} \C w_2\right)^{\nicefrac{1}{2}} \C w_2 - \B^{\top} w_1
    \end{bmatrix}.
\end{equation*} 
Furthermore, we show that $\mathcal{L}$ is convex-concave and has an equilibrium only at $w_1, w_2 = 0 \in \R^d$ (check Appendix \ref{appendix:num_exp}). To solve~\eqref{eq:min_max_cubic_Rd}, we implement the \algname{EG} method using two types of step size strategies: (1) a constant step size $\gamma_k = \omega_k = \nicefrac{1}{c}$, and (2) an adaptive step size $\gamma_k = \omega_k = \nicefrac{1}{\left(c_0 + c_1 \|F(x_k)\| \right)}$. For the constant step size \algname{EG}, we perform a grid search over $c \in \{10^2, 10^3, 10^4, 10^5, 10^6, 10^7\}$. We find that $c = 10^5$ yields the best performance: larger values lead to slower convergence, while smaller values cause divergence. Figures~\ref{fig:monotone_cubic_relative_error} and~\ref{fig:monotone_cubic_step_size} present the relative error and step size for the case $c = 10^5$. For our adaptive \algname{EG} method, we perform a grid search over $c_0 \in \{10, 100, 1000\}$ and $c_1 \in \{0.1, 1, 10\}$, evaluating all $9$ possible combinations. The performance of all adaptive variants is plotted in Figures~\ref{fig:monotone_cubic_relative_error} and~\ref{fig:monotone_cubic_step_size}. We observe that most combinations outperform the constant step size \algname{EG}, with $(c_0, c_1) = (10, 10)$ achieving the best results (see Figure \ref{fig:monotone_cubic_relative_error}).

Interestingly, while the adaptive \algname{EG} starts with smaller step sizes compared to the constant step size \algname{EG}, its step sizes increase over time and eventually surpass those of the constant step size approach (see Figure \ref{fig:monotone_cubic_step_size}). This highlights the practical effectiveness of our proposed method in handling non-Lipschitz operators such as the one in~\eqref{eq:min_max_cubic_Rd}.

\textbf{Performance on a Weak Minty Problem.} Here we consider the unconstrained GlobalForsaken problem from \citet{pethick2023escaping} given by
\begin{equation}\label{eq:globalforsaken}
    \min_{w_1 \in \R} \max_{w_2 \in \R} \mathcal{L}(w_1, w_2) \eqdef w_1 w_2 + \psi(w_1) - \psi(w_2), 
\end{equation}
where $\psi(w) = \frac{2w^6}{21} - \frac{w^4}{3} + \frac{w^2}{3}$. As shown in~\citet{pethick2023escaping}, the saddle-point problem in~\eqref{eq:globalforsaken} admits a global Nash equilibrium at $(w_1, w_2) = (0, 0)$ and satisfies the weak Minty condition~\eqref{eq:weak_minty} 
\begin{figure}[h]
    \vspace{-3mm}
    \begin{minipage}[t]{0.56\textwidth}
    \vspace{0pt}
          with parameter $\rho \approx 0.119732$. We implement \algname{AdaptiveEG+}, \algname{EG+} and \algname{EG} with our step size strategy to solve this problem. For each algorithm, we perform step size tuning on a a grid of $\{10^{-5}, 10^{-4}, \cdots, 10^2 \}$. We observe that both \algname{AdaptiveEG+} and \algname{EG+} perform best with a fixed step size of $\gamma_k = 0.1$. For our method, we set the step size parameters as $(c_0, c_1) = (1, 1)$. In \Cref{fig:globalforsaken}, we present the trajectory plots of these algorithms, all initialized at $(w_1, w_2) = (1, 1)$. Our findings indicate that all algorithms eventually converge to the equilibrium $(0, 0)$, but the convergence of our method is significantly faster. This demonstrates the advantage of our step size strategy in solving challenging problems that satisfy only weak Minty conditions.
    \end{minipage}    \hfill
    \begin{minipage}[t]{0.4\textwidth}
    \vspace{0pt}
        \centering
        \includegraphics[width=\linewidth]{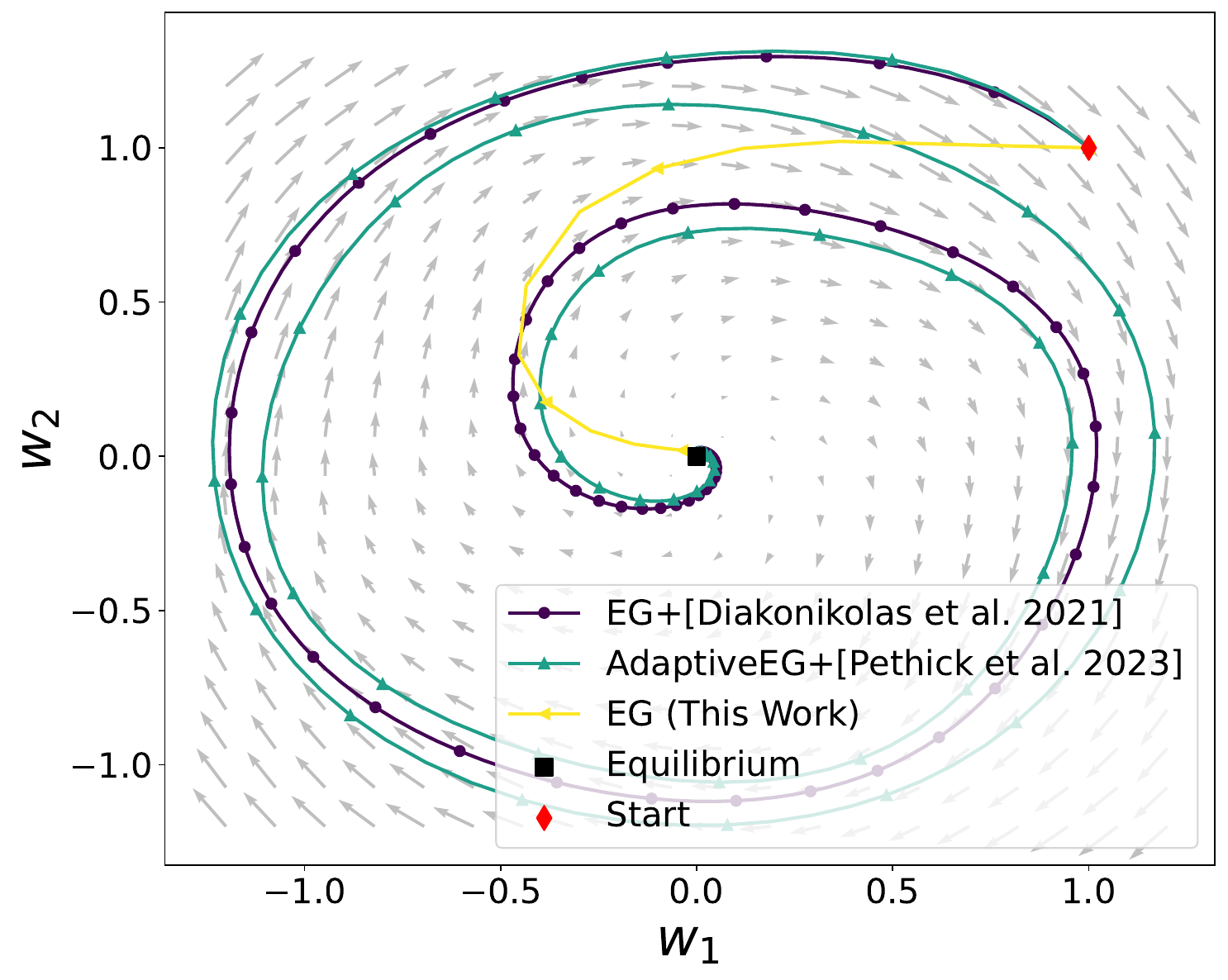}
        \caption{\small Trajectories of algorithms for solving problem~\eqref{eq:globalforsaken}.}\label{fig:globalforsaken}
    \end{minipage}
\end{figure}

\section{Conclusion} 
This work extends the analysis of the \algname{EG} method to a broader class of $\alpha$-symmetric $(L_0, L_1)$-Lipschitz operators. We establish new convergence guarantees for strongly monotone, monotone, and weak Minty settings, supported by a novel adaptive step size rule.

A limitation of our current analysis is that it focuses solely on root-finding problems and does not handle the constrained setup. However, the results included in this work advance the theoretical understanding of \algname{EG} and open several promising directions, including extensions to constrained and stochastic settings, as well as the analysis of optimistic gradient methods under this relaxed assumption. Furthermore, another important research direction is the estimation of the unknown constants $\alpha, L_0, L_1$, which would pave the way for fully adaptive algorithms.

\newpage
\small{\bibliography{bibfile}}

\begin{thebibliography}{51}
\providecommand{\natexlab}[1]{#1}
\providecommand{\url}[1]{\texttt{#1}}
\expandafter\ifx\csname urlstyle\endcsname\relax
  \providecommand{\doi}[1]{doi: #1}\else
  \providecommand{\doi}{doi: \begingroup \urlstyle{rm}\Url}\fi

\bibitem[Ahn et~al.(2024)Ahn, Cheng, Song, Yun, Jadbabaie, and Sra]{ahn2023linear}
K.~Ahn, X.~Cheng, M.~Song, C.~Yun, A.~Jadbabaie, and S.~Sra.
\newblock Linear attention is (maybe) all you need (to understand transformer optimization).
\newblock \emph{ICLR}, 2024.

\bibitem[Antonakopoulos et~al.(2021)Antonakopoulos, Belmega, and Mertikopoulos]{antonakopoulos2020adaptive}
K.~Antonakopoulos, E.~V. Belmega, and P.~Mertikopoulos.
\newblock Adaptive extra-gradient methods for min-max optimization and games.
\newblock \emph{ICLR}, 2021.

\bibitem[Balduzzi et~al.(2018)Balduzzi, Racaniere, Martens, Foerster, Tuyls, and Graepel]{balduzzi2018mechanics}
D.~Balduzzi, S.~Racaniere, J.~Martens, J.~Foerster, K.~Tuyls, and T.~Graepel.
\newblock The mechanics of n-player differentiable games.
\newblock In \emph{ICML}, 2018.

\bibitem[Bauschke et~al.(2017)Bauschke, Bolte, and Teboulle]{bauschke2017descent}
H.~H. Bauschke, J.~Bolte, and M.~Teboulle.
\newblock A descent lemma beyond lipschitz gradient continuity: first-order methods revisited and applications.
\newblock \emph{Mathematics of Operations Research}, 2017.

\bibitem[Beznosikov et~al.(2022)Beznosikov, Dvurechenskii, Koloskova, Samokhin, Stich, and Gasnikov]{beznosikov2022decentralized}
A.~Beznosikov, P.~Dvurechenskii, A.~Koloskova, V.~Samokhin, S.~U. Stich, and A.~Gasnikov.
\newblock Decentralized local stochastic extra-gradient for variational inequalities.
\newblock \emph{NeurIPS}, 2022.

\bibitem[Beznosikov et~al.(2023)Beznosikov, Gorbunov, Berard, and Loizou]{beznosikov2023stochastic}
A.~Beznosikov, E.~Gorbunov, H.~Berard, and N.~Loizou.
\newblock Stochastic gradient descent-ascent: Unified theory and new efficient methods.
\newblock In \emph{AISTATS}, 2023.

\bibitem[B{\"o}hm(2023)]{bohm2022solving}
A.~B{\"o}hm.
\newblock Solving nonconvex-nonconcave min-max problems exhibiting weak minty solutions.
\newblock \emph{TMLR}, 2023.

\bibitem[Brown et~al.(2020)Brown, Bakhtin, Lerer, and Gong]{brown2020combining}
N.~Brown, A.~Bakhtin, A.~Lerer, and Q.~Gong.
\newblock Combining deep reinforcement learning and search for imperfect-information games.
\newblock \emph{NeurIPS}, 2020.

\bibitem[Chambolle and Pock(2016)]{chambolle2016ergodic}
A.~Chambolle and T.~Pock.
\newblock On the ergodic convergence rates of a first-order primal--dual algorithm.
\newblock \emph{Mathematical Programming}, 2016.

\bibitem[Chen et~al.(2023)Chen, Zhou, Liang, and Lu]{chen2023generalized}
Z.~Chen, Y.~Zhou, Y.~Liang, and Z.~Lu.
\newblock Generalized-smooth nonconvex optimization is as efficient as smooth nonconvex optimization.
\newblock In \emph{ICML}, 2023.

\bibitem[Choudhury et~al.(2024)Choudhury, Gorbunov, and Loizou]{choudhury2024single}
S.~Choudhury, E.~Gorbunov, and N.~Loizou.
\newblock Single-call stochastic extragradient methods for structured non-monotone variational inequalities: Improved analysis under weaker conditions.
\newblock \emph{NeurIPS}, 2024.

\bibitem[Daskalakis et~al.(2017)Daskalakis, Ilyas, Syrgkanis, and Zeng]{daskalakis2017training}
C.~Daskalakis, A.~Ilyas, V.~Syrgkanis, and H.~Zeng.
\newblock Training gans with optimism.
\newblock \emph{arXiv preprint arXiv:1711.00141}, 2017.

\bibitem[Diakonikolas(2020)]{diakonikolas2020halpern}
J.~Diakonikolas.
\newblock Halpern iteration for near-optimal and parameter-free monotone inclusion and strong solutions to variational inequalities.
\newblock In \emph{COLT}, 2020.

\bibitem[Diakonikolas et~al.(2021)Diakonikolas, Daskalakis, and Jordan]{diakonikolas2021efficient}
J.~Diakonikolas, C.~Daskalakis, and M.~I. Jordan.
\newblock Efficient methods for structured nonconvex-nonconcave min-max optimization.
\newblock In \emph{AISTATS}, 2021.

\bibitem[Fan et~al.(2023)Fan, Li, and Chen]{fan2023weaker}
Y.~Fan, Y.~Li, and B.~Chen.
\newblock Weaker mvi condition: Extragradient methods with multi-step exploration.
\newblock In \emph{ICLR}, 2023.

\bibitem[Fox et~al.(2025)Fox, Mishkin, Vaswani, and Schmidt]{fox2025glocal}
C.~Fox, A.~Mishkin, S.~Vaswani, and M.~Schmidt.
\newblock Glocal smoothness: Line search can really help!
\newblock \emph{arXiv preprint arXiv:2506.12648}, 2025.

\bibitem[Gidel et~al.(2019)Gidel, Berard, Vignoud, Vincent, and Lacoste-Julien]{gidel2018variational}
G.~Gidel, H.~Berard, G.~Vignoud, P.~Vincent, and S.~Lacoste-Julien.
\newblock A variational inequality perspective on generative adversarial networks.
\newblock \emph{ICLR}, 2019.

\bibitem[Goodfellow et~al.(2020)Goodfellow, Pouget-Abadie, Mirza, Xu, Warde-Farley, Ozair, Courville, and Bengio]{goodfellow2020generative}
I.~Goodfellow, J.~Pouget-Abadie, M.~Mirza, B.~Xu, D.~Warde-Farley, S.~Ozair, A.~Courville, and Y.~Bengio.
\newblock Generative adversarial networks.
\newblock \emph{Communications of the ACM}, 2020.

\bibitem[Gorbunov et~al.(2022{\natexlab{a}})Gorbunov, Berard, Gidel, and Loizou]{gorbunov2022stochastic}
E.~Gorbunov, H.~Berard, G.~Gidel, and N.~Loizou.
\newblock Stochastic extragradient: General analysis and improved rates.
\newblock In \emph{AISTATS}, 2022{\natexlab{a}}.

\bibitem[Gorbunov et~al.(2022{\natexlab{b}})Gorbunov, Loizou, and Gidel]{gorbunov2022extragradient}
E.~Gorbunov, N.~Loizou, and G.~Gidel.
\newblock Extragradient method: O (1/k) last-iterate convergence for monotone variational inequalities and connections with cocoercivity.
\newblock In \emph{AISTATS}, 2022{\natexlab{b}}.

\bibitem[Gorbunov et~al.(2024)Gorbunov, Danilova, Shibaev, Dvurechensky, and Gasnikov]{gorbunov2021high}
E.~Gorbunov, M.~Danilova, I.~Shibaev, P.~Dvurechensky, and A.~Gasnikov.
\newblock High probability complexity bounds for non-smooth stochastic optimization with heavy-tailed noise.
\newblock \emph{JOTA}, 2024.

\bibitem[Gorbunov et~al.(2025)Gorbunov, Tupitsa, Choudhury, Aliev, Richt{\'a}rik, Horv{\'a}th, and Tak{\'a}{\v{c}}]{gorbunov2024methods}
E.~Gorbunov, N.~Tupitsa, S.~Choudhury, A.~Aliev, P.~Richt{\'a}rik, S.~Horv{\'a}th, and M.~Tak{\'a}{\v{c}}.
\newblock Methods for convex $(l\_0, l\_1) $-smooth optimization: Clipping, acceleration, and adaptivity.
\newblock \emph{ICLR}, 2025.

\bibitem[Hochreiter and Schmidhuber(1997)]{hochreiter1997long}
S.~Hochreiter and J.~Schmidhuber.
\newblock Long short-term memory.
\newblock \emph{Neural computation}, 1997.

\bibitem[Hsieh et~al.(2019)Hsieh, Iutzeler, Malick, and Mertikopoulos]{hsieh2019convergence}
Y.-G. Hsieh, F.~Iutzeler, J.~Malick, and P.~Mertikopoulos.
\newblock On the convergence of single-call stochastic extra-gradient methods.
\newblock \emph{NeurIPS}, 2019.

\bibitem[Korpelevich(1977)]{korpelevich1977extragradient}
G.~Korpelevich.
\newblock Extragradient method for finding saddle points and other problems.
\newblock \emph{Matekon}, 1977.

\bibitem[Lee and Kim(2021)]{lee2021fast}
S.~Lee and D.~Kim.
\newblock Fast extra gradient methods for smooth structured nonconvex-nonconcave minimax problems.
\newblock \emph{NeurIPS}, 2021.

\bibitem[Li et~al.(2022)Li, Yu, Loizou, Gidel, Ma, Le~Roux, and Jordan]{li2022convergence}
C.~J. Li, Y.~Yu, N.~Loizou, G.~Gidel, Y.~Ma, N.~Le~Roux, and M.~Jordan.
\newblock On the convergence of stochastic extragradient for bilinear games using restarted iteration averaging.
\newblock In \emph{AISTATS}, 2022.

\bibitem[Loizou et~al.(2020)Loizou, Berard, Jolicoeur-Martineau, Vincent, Lacoste-Julien, and Mitliagkas]{loizou2020stochastic}
N.~Loizou, H.~Berard, A.~Jolicoeur-Martineau, P.~Vincent, S.~Lacoste-Julien, and I.~Mitliagkas.
\newblock Stochastic hamiltonian gradient methods for smooth games.
\newblock In \emph{ICML}, 2020.

\bibitem[Loizou et~al.(2021)Loizou, Berard, Gidel, Mitliagkas, and Lacoste-Julien]{loizou2021stochastic}
N.~Loizou, H.~Berard, G.~Gidel, I.~Mitliagkas, and S.~Lacoste-Julien.
\newblock Stochastic gradient descent-ascent and consensus optimization for smooth games: Convergence analysis under expected co-coercivity.
\newblock \emph{NeurIPS}, 2021.

\bibitem[Luo and O’Neill(2025)]{luo2025adaptive}
Y.~Luo and M.~J. O’Neill.
\newblock Adaptive extragradient methods for root-finding problems under relaxed assumptions.
\newblock In \emph{AISTATS}, 2025.

\bibitem[Luo and Tran-Dinh(2022)]{luo2022extragradient}
Y.~Luo and Q.~Tran-Dinh.
\newblock Extragradient-type methods for co-monotone root-finding problems.
\newblock \emph{UNC-STOR Technical Report}, 2022.

\bibitem[Mishchenko et~al.(2020)Mishchenko, Kovalev, Shulgin, Richt{\'a}rik, and Malitsky]{mishchenko2020revisiting}
K.~Mishchenko, D.~Kovalev, E.~Shulgin, P.~Richt{\'a}rik, and Y.~Malitsky.
\newblock Revisiting stochastic extragradient.
\newblock In \emph{AISTATS}, 2020.

\bibitem[Mokhtari et~al.(2020)Mokhtari, Ozdaglar, and Pattathil]{mokhtari2020unified}
A.~Mokhtari, A.~Ozdaglar, and S.~Pattathil.
\newblock A unified analysis of extra-gradient and optimistic gradient methods for saddle point problems: Proximal point approach.
\newblock In \emph{AISTATS}, 2020.

\bibitem[Namkoong and Duchi(2016)]{namkoong2016stochastic}
H.~Namkoong and J.~C. Duchi.
\newblock Stochastic gradient methods for distributionally robust optimization with f-divergences.
\newblock \emph{NeurIPS}, 2016.

\bibitem[Pethick et~al.(2022)Pethick, Latafat, Patrinos, Fercoq, and Cevher]{pethick2023escaping}
T.~Pethick, P.~Latafat, P.~Patrinos, O.~Fercoq, and V.~Cevher.
\newblock Escaping limit cycles: Global convergence for constrained nonconvex-nonconcave minimax problems.
\newblock \emph{ICLR}, 2022.

\bibitem[Popov(1980)]{popov1980modification}
L.~D. Popov.
\newblock A modification of the arrow-hurwicz method for search of saddle points.
\newblock \emph{Mathematical notes of the Academy of Sciences of the USSR}, 1980.

\bibitem[Riabinin et~al.(2025)Riabinin, Shulgin, Gruntkowska, and Richt{\'a}rik]{riabinin2025gluon}
A.~Riabinin, E.~Shulgin, K.~Gruntkowska, and P.~Richt{\'a}rik.
\newblock Gluon: Making muon \& scion great again!(bridging theory and practice of lmo-based optimizers for llms).
\newblock \emph{arXiv preprint arXiv:2505.13416}, 2025.

\bibitem[Ryu and Yin(2022)]{ryu2022large}
E.~K. Ryu and W.~Yin.
\newblock \emph{Large-scale convex optimization: algorithms \& analyses via monotone operators}.
\newblock Cambridge University Press, 2022.

\bibitem[Sokota et~al.(2023)Sokota, D'Orazio, Kolter, Loizou, Lanctot, Mitliagkas, Brown, and Kroer]{sokotaunified}
S.~Sokota, R.~D'Orazio, J.~Z. Kolter, N.~Loizou, M.~Lanctot, I.~Mitliagkas, N.~Brown, and C.~Kroer.
\newblock A unified approach to reinforcement learning, quantal response equilibria, and two-player zero-sum games.
\newblock In \emph{ICLR}, 2023.

\bibitem[Tran-Dinh(2024)]{tran2024variance}
Q.~Tran-Dinh.
\newblock Variance-reduced forward-reflected-backward splitting methods for nonmonotone generalized equations.
\newblock In \emph{ICML}, 2024.

\bibitem[Tseng(1995)]{tseng1995linear}
P.~Tseng.
\newblock On linear convergence of iterative methods for the variational inequality problem.
\newblock \emph{Journal of Computational and Applied Mathematics}, 1995.

\bibitem[Vankov et~al.(2024)Vankov, Nedich, and Sankar]{pmlr-v235-vankov24a}
D.~Vankov, A.~Nedich, and L.~Sankar.
\newblock Generalized smooth variational inequalities: Methods with adaptive stepsizes.
\newblock In \emph{ICML}, 2024.

\bibitem[Vankov et~al.(2025)Vankov, Rodomanov, Nedich, Sankar, and Stich]{vankov2024optimizing}
D.~Vankov, A.~Rodomanov, A.~Nedich, L.~Sankar, and S.~U. Stich.
\newblock Optimizing $(l\_0, l\_1) $-smooth functions by gradient methods.
\newblock \emph{ICLR}, 2025.

\bibitem[Vaswani et~al.(2017)Vaswani, Shazeer, Parmar, Uszkoreit, Jones, Gomez, Kaiser, and Polosukhin]{vaswani2017attention}
A.~Vaswani, N.~Shazeer, N.~Parmar, J.~Uszkoreit, L.~Jones, A.~N. Gomez, {\L}.~Kaiser, and I.~Polosukhin.
\newblock Attention is all you need.
\newblock \emph{NeurIPS}, 2017.

\bibitem[Xian et~al.(2024)Xian, Chen, and Huang]{xian2024delving}
W.~Xian, Z.~Chen, and H.~Huang.
\newblock Delving into the convergence of generalized smooth minimax optimization.
\newblock In \emph{ICML}, 2024.

\bibitem[Yoon and Ryu(2021)]{yoon2021accelerated}
T.~Yoon and E.~K. Ryu.
\newblock Accelerated algorithms for smooth convex-concave minimax problems with rate on squared gradient norm.
\newblock \emph{ICML}, 2021.

\bibitem[Yoon et~al.(2025)Yoon, Choudhury, and Loizou]{yoon2025multiplayer}
T.~Yoon, S.~Choudhury, and N.~Loizou.
\newblock Multiplayer federated learning: Reaching equilibrium with less communication.
\newblock \emph{NeurIPS}, 2025.

\bibitem[Zhang et~al.(2020{\natexlab{a}})Zhang, Jin, Fang, and Wang]{zhang2020improved}
B.~Zhang, J.~Jin, C.~Fang, and L.~Wang.
\newblock Improved analysis of clipping algorithms for non-convex optimization.
\newblock \emph{NeurIPS}, 2020{\natexlab{a}}.

\bibitem[Zhang et~al.(2020{\natexlab{b}})Zhang, He, Sra, and Jadbabaie]{zhang2019gradient}
J.~Zhang, T.~He, S.~Sra, and A.~Jadbabaie.
\newblock Why gradient clipping accelerates training: A theoretical justification for adaptivity.
\newblock \emph{ICLR}, 2020{\natexlab{b}}.

\bibitem[Zhang et~al.(2024)Zhang, Choudhury, Stich, and Loizou]{zhangcommunication}
S.~Zhang, S.~Choudhury, S.~U. Stich, and N.~Loizou.
\newblock Communication-efficient gradient descent-accent methods for distributed variational inequalities: Unified analysis and local updates.
\newblock In \emph{ICLR}, 2024.

\bibitem[Zheng et~al.(2024)Zheng, Loizou, You, and Mallada]{zheng2024dissipative}
T.~Zheng, N.~Loizou, P.~You, and E.~Mallada.
\newblock Dissipative gradient descent ascent method: A control theory inspired algorithm for min-max optimization.
\newblock \emph{IEEE Control Systems Letters}, 2024.

\end{thebibliography}

\newpage
\section*{NeurIPS Paper Checklist}

\begin{enumerate}

\item {\bf Claims}
    \item[] Question: Do the main claims made in the abstract and introduction accurately reflect the paper's contributions and scope?
    \item[] Answer: \answerYes{} % Replace by \answerYes{}, \answerNo{}, or \answerNA{}.
    \item[] Justification: see Section \ref{sec:convergence_analysis} and \ref{sec:experiments}.
    \item[] Guidelines:
    \begin{itemize}
        \item The answer NA means that the abstract and introduction do not include the claims made in the paper.
        \item The abstract and/or introduction should clearly state the claims made, including the contributions made in the paper and important assumptions and limitations. A No or NA answer to this question will not be perceived well by the reviewers. 
        \item The claims made should match theoretical and experimental results, and reflect how much the results can be expected to generalize to other settings. 
        \item It is fine to include aspirational goals as motivation as long as it is clear that these goals are not attained by the paper. 
    \end{itemize}

\item {\bf Limitations}
    \item[] Question: Does the paper discuss the limitations of the work performed by the authors?
    \item[] Answer: \answerYes{} % Replace by \answerYes{}, \answerNo{}, or \answerNA{}.
    \item[] Justification: see Section \ref{sec:convergence_analysis}.
    \item[] Guidelines:
    \begin{itemize}
        \item The answer NA means that the paper has no limitation while the answer No means that the paper has limitations, but those are not discussed in the paper. 
        \item The authors are encouraged to create a separate "Limitations" section in their paper.
        \item The paper should point out any strong assumptions and how robust the results are to violations of these assumptions (e.g., independence assumptions, noiseless settings, model well-specification, asymptotic approximations only holding locally). The authors should reflect on how these assumptions might be violated in practice and what the implications would be.
        \item The authors should reflect on the scope of the claims made, e.g., if the approach was only tested on a few datasets or with a few runs. In general, empirical results often depend on implicit assumptions, which should be articulated.
        \item The authors should reflect on the factors that influence the performance of the approach. For example, a facial recognition algorithm may perform poorly when image resolution is low or images are taken in low lighting. Or a speech-to-text system might not be used reliably to provide closed captions for online lectures because it fails to handle technical jargon.
        \item The authors should discuss the computational efficiency of the proposed algorithms and how they scale with dataset size.
        \item If applicable, the authors should discuss possible limitations of their approach to address problems of privacy and fairness.
        \item While the authors might fear that complete honesty about limitations might be used by reviewers as grounds for rejection, a worse outcome might be that reviewers discover limitations that aren't acknowledged in the paper. The authors should use their best judgment and recognize that individual actions in favor of transparency play an important role in developing norms that preserve the integrity of the community. Reviewers will be specifically instructed to not penalize honesty concerning limitations.
    \end{itemize}

\item {\bf Theory assumptions and proofs}
    \item[] Question: For each theoretical result, does the paper provide the full set of assumptions and a complete (and correct) proof?
    \item[] Answer: \answerYes{} % Replace by \answerYes{}, \answerNo{}, or \answerNA{}.
    \item[] Justification: see Section \ref{sec:intro} and \ref{sec:convergence_analysis}.
    \item[] Guidelines:
    \begin{itemize}
        \item The answer NA means that the paper does not include theoretical results. 
        \item All the theorems, formulas, and proofs in the paper should be numbered and cross-referenced.
        \item All assumptions should be clearly stated or referenced in the statement of any theorems.
        \item The proofs can either appear in the main paper or the supplemental material, but if they appear in the supplemental material, the authors are encouraged to provide a short proof sketch to provide intuition. 
        \item Inversely, any informal proof provided in the core of the paper should be complemented by formal proofs provided in appendix or supplemental material.
        \item Theorems and Lemmas that the proof relies upon should be properly referenced. 
    \end{itemize}

    \item {\bf Experimental result reproducibility}
    \item[] Question: Does the paper fully disclose all the information needed to reproduce the main experimental results of the paper to the extent that it affects the main claims and/or conclusions of the paper (regardless of whether the code and data are provided or not)?
    \item[] Answer: \answerYes{} % Replace by \answerYes{}, \answerNo{}, or \answerNA{}.
    \item[] Justification: see Section \ref{sec:experiments}.
    \item[] Guidelines:
    \begin{itemize}
        \item The answer NA means that the paper does not include experiments.
        \item If the paper includes experiments, a No answer to this question will not be perceived well by the reviewers: Making the paper reproducible is important, regardless of whether the code and data are provided or not.
        \item If the contribution is a dataset and/or model, the authors should describe the steps taken to make their results reproducible or verifiable. 
        \item Depending on the contribution, reproducibility can be accomplished in various ways. For example, if the contribution is a novel architecture, describing the architecture fully might suffice, or if the contribution is a specific model and empirical evaluation, it may be necessary to either make it possible for others to replicate the model with the same dataset, or provide access to the model. In general. releasing code and data is often one good way to accomplish this, but reproducibility can also be provided via detailed instructions for how to replicate the results, access to a hosted model (e.g., in the case of a large language model), releasing of a model checkpoint, or other means that are appropriate to the research performed.
        \item While NeurIPS does not require releasing code, the conference does require all submissions to provide some reasonable avenue for reproducibility, which may depend on the nature of the contribution. For example
        \begin{enumerate}
            \item If the contribution is primarily a new algorithm, the paper should make it clear how to reproduce that algorithm.
            \item If the contribution is primarily a new model architecture, the paper should describe the architecture clearly and fully.
            \item If the contribution is a new model (e.g., a large language model), then there should either be a way to access this model for reproducing the results or a way to reproduce the model (e.g., with an open-source dataset or instructions for how to construct the dataset).
            \item We recognize that reproducibility may be tricky in some cases, in which case authors are welcome to describe the particular way they provide for reproducibility. In the case of closed-source models, it may be that access to the model is limited in some way (e.g., to registered users), but it should be possible for other researchers to have some path to reproducing or verifying the results.
        \end{enumerate}
    \end{itemize}

\item {\bf Open access to data and code}
    \item[] Question: Does the paper provide open access to the data and code, with sufficient instructions to faithfully reproduce the main experimental results, as described in supplemental material?
    \item[] Answer: \answerYes{} % Replace by \answerYes{}, \answerNo{}, or \answerNA{}.
    \item[] Justification: see Section \ref{sec:experiments}.
    \item[] Guidelines:
    \begin{itemize}
        \item The answer NA means that paper does not include experiments requiring code.
        \item Please see the NeurIPS code and data submission guidelines (\url{https://nips.cc/public/guides/CodeSubmissionPolicy}) for more details.
        \item While we encourage the release of code and data, we understand that this might not be possible, so “No” is an acceptable answer. Papers cannot be rejected simply for not including code, unless this is central to the contribution (e.g., for a new open-source benchmark).
        \item The instructions should contain the exact command and environment needed to run to reproduce the results. See the NeurIPS code and data submission guidelines (\url{https://nips.cc/public/guides/CodeSubmissionPolicy}) for more details.
        \item The authors should provide instructions on data access and preparation, including how to access the raw data, preprocessed data, intermediate data, and generated data, etc.
        \item The authors should provide scripts to reproduce all experimental results for the new proposed method and baselines. If only a subset of experiments are reproducible, they should state which ones are omitted from the script and why.
        \item At submission time, to preserve anonymity, the authors should release anonymized versions (if applicable).
        \item Providing as much information as possible in supplemental material (appended to the paper) is recommended, but including URLs to data and code is permitted.
    \end{itemize}

\item {\bf Experimental setting/details}
    \item[] Question: Does the paper specify all the training and test details (e.g., data splits, hyperparameters, how they were chosen, type of optimizer, etc.) necessary to understand the results?
    \item[] Answer: \answerYes{} % Replace by \answerYes{}, \answerNo{}, or \answerNA{}.
    \item[] Justification: see Section \ref{sec:experiments}.
    \item[] Guidelines:
    \begin{itemize}
        \item The answer NA means that the paper does not include experiments.
        \item The experimental setting should be presented in the core of the paper to a level of detail that is necessary to appreciate the results and make sense of them.
        \item The full details can be provided either with the code, in appendix, or as supplemental material.
    \end{itemize}

\item {\bf Experiment statistical significance}
    \item[] Question: Does the paper report error bars suitably and correctly defined or other appropriate information about the statistical significance of the experiments?
    \item[] Answer: \answerNA{} % Replace by \answerYes{}, \answerNo{}, or \answerNA{}.
    \item[] Justification: all experiments are deterministic.
    \item[] Guidelines:
    \begin{itemize}
        \item The answer NA means that the paper does not include experiments.
        \item The authors should answer "Yes" if the results are accompanied by error bars, confidence intervals, or statistical significance tests, at least for the experiments that support the main claims of the paper.
        \item The factors of variability that the error bars are capturing should be clearly stated (for example, train/test split, initialization, random drawing of some parameter, or overall run with given experimental conditions).
        \item The method for calculating the error bars should be explained (closed form formula, call to a library function, bootstrap, etc.)
        \item The assumptions made should be given (e.g., Normally distributed errors).
        \item It should be clear whether the error bar is the standard deviation or the standard error of the mean.
        \item It is OK to report 1-sigma error bars, but one should state it. The authors should preferably report a 2-sigma error bar than state that they have a 96\% CI, if the hypothesis of Normality of errors is not verified.
        \item For asymmetric distributions, the authors should be careful not to show in tables or figures symmetric error bars that would yield results that are out of range (e.g. negative error rates).
        \item If error bars are reported in tables or plots, The authors should explain in the text how they were calculated and reference the corresponding figures or tables in the text.
    \end{itemize}

\item {\bf Experiments compute resources}
    \item[] Question: For each experiment, does the paper provide sufficient information on the computer resources (type of compute workers, memory, time of execution) needed to reproduce the experiments?
    \item[] Answer: \answerYes{} % Replace by \answerYes{}, \answerNo{}, or \answerNA{}.
    \item[] Justification: see Section \ref{sec:experiments}.
    \item[] Guidelines:
    \begin{itemize}
        \item The answer NA means that the paper does not include experiments.
        \item The paper should indicate the type of compute workers CPU or GPU, internal cluster, or cloud provider, including relevant memory and storage.
        \item The paper should provide the amount of compute required for each of the individual experimental runs as well as estimate the total compute. 
        \item The paper should disclose whether the full research project required more compute than the experiments reported in the paper (e.g., preliminary or failed experiments that didn't make it into the paper). 
    \end{itemize}
    
\item {\bf Code of ethics}
    \item[] Question: Does the research conducted in the paper conform, in every respect, with the NeurIPS Code of Ethics \url{https://neurips.cc/public/EthicsGuidelines}?
    \item[] Answer: \answerYes{} % Replace by \answerYes{}, \answerNo{}, or \answerNA{}.
    \item[] Justification: the paper follows NeurIPS Code of Ethics.
    \item[] Guidelines:
    \begin{itemize}
        \item The answer NA means that the authors have not reviewed the NeurIPS Code of Ethics.
        \item If the authors answer No, they should explain the special circumstances that require a deviation from the Code of Ethics.
        \item The authors should make sure to preserve anonymity (e.g., if there is a special consideration due to laws or regulations in their jurisdiction).
    \end{itemize}

\item {\bf Broader impacts}
    \item[] Question: Does the paper discuss both potential positive societal impacts and negative societal impacts of the work performed?
    \item[] Answer: \answerNA{} % Replace by \answerYes{}, \answerNo{}, or \answerNA{}.
    \item[] Justification: the paper is mostly theoretical and does not have a direct societal impact.
    \item[] Guidelines:
    \begin{itemize}
        \item The answer NA means that there is no societal impact of the work performed.
        \item If the authors answer NA or No, they should explain why their work has no societal impact or why the paper does not address societal impact.
        \item Examples of negative societal impacts include potential malicious or unintended uses (e.g., disinformation, generating fake profiles, surveillance), fairness considerations (e.g., deployment of technologies that could make decisions that unfairly impact specific groups), privacy considerations, and security considerations.
        \item The conference expects that many papers will be foundational research and not tied to particular applications, let alone deployments. However, if there is a direct path to any negative applications, the authors should point it out. For example, it is legitimate to point out that an improvement in the quality of generative models could be used to generate deepfakes for disinformation. On the other hand, it is not needed to point out that a generic algorithm for optimizing neural networks could enable people to train models that generate Deepfakes faster.
        \item The authors should consider possible harms that could arise when the technology is being used as intended and functioning correctly, harms that could arise when the technology is being used as intended but gives incorrect results, and harms following from (intentional or unintentional) misuse of the technology.
        \item If there are negative societal impacts, the authors could also discuss possible mitigation strategies (e.g., gated release of models, providing defenses in addition to attacks, mechanisms for monitoring misuse, mechanisms to monitor how a system learns from feedback over time, improving the efficiency and accessibility of ML).
    \end{itemize}
    
\item {\bf Safeguards}
    \item[] Question: Does the paper describe safeguards that have been put in place for responsible release of data or models that have a high risk for misuse (e.g., pretrained language models, image generators, or scraped datasets)?
    \item[] Answer: \answerNA{} % Replace by \answerYes{}, \answerNo{}, or \answerNA{}.
    \item[] Justification: we do not release data or models.
    \item[] Guidelines:
    \begin{itemize}
        \item The answer NA means that the paper poses no such risks.
        \item Released models that have a high risk for misuse or dual-use should be released with necessary safeguards to allow for controlled use of the model, for example by requiring that users adhere to usage guidelines or restrictions to access the model or implementing safety filters. 
        \item Datasets that have been scraped from the Internet could pose safety risks. The authors should describe how they avoided releasing unsafe images.
        \item We recognize that providing effective safeguards is challenging, and many papers do not require this, but we encourage authors to take this into account and make a best faith effort.
    \end{itemize}

\item {\bf Licenses for existing assets}
    \item[] Question: Are the creators or original owners of assets (e.g., code, data, models), used in the paper, properly credited and are the license and terms of use explicitly mentioned and properly respected?
    \item[] Answer: \answerYes{}{} % Replace by \answerYes{}, \answerNo{}, or \answerNA{}.
    \item[] Justification: see Section \ref{sec:experiments}.
    \item[] Guidelines:
    \begin{itemize}
        \item The answer NA means that the paper does not use existing assets.
        \item The authors should cite the original paper that produced the code package or dataset.
        \item The authors should state which version of the asset is used and, if possible, include a URL.
        \item The name of the license (e.g., CC-BY 4.0) should be included for each asset.
        \item For scraped data from a particular source (e.g., website), the copyright and terms of service of that source should be provided.
        \item If assets are released, the license, copyright information, and terms of use in the package should be provided. For popular datasets, \url{paperswithcode.com/datasets} has curated licenses for some datasets. Their licensing guide can help determine the license of a dataset.
        \item For existing datasets that are re-packaged, both the original license and the license of the derived asset (if it has changed) should be provided.
        \item If this information is not available online, the authors are encouraged to reach out to the asset's creators.
    \end{itemize}

\item {\bf New assets}
    \item[] Question: Are new assets introduced in the paper well documented and is the documentation provided alongside the assets?
    \item[] Answer: \answerNA{} % Replace by \answerYes{}, \answerNo{}, or \answerNA{}.
    \item[] Justification: the paper does not release new assets.
    \item[] Guidelines:
    \begin{itemize}
        \item The answer NA means that the paper does not release new assets.
        \item Researchers should communicate the details of the dataset/code/model as part of their submissions via structured templates. This includes details about training, license, limitations, etc. 
        \item The paper should discuss whether and how consent was obtained from people whose asset is used.
        \item At submission time, remember to anonymize your assets (if applicable). You can either create an anonymized URL or include an anonymized zip file.
    \end{itemize}

\item {\bf Crowdsourcing and research with human subjects}
    \item[] Question: For crowdsourcing experiments and research with human subjects, does the paper include the full text of instructions given to participants and screenshots, if applicable, as well as details about compensation (if any)? 
    \item[] Answer: \answerNA{} % Replace by \answerYes{}, \answerNo{}, or \answerNA{}.
    \item[] Justification: not applicable.
    \item[] Guidelines:
    \begin{itemize}
        \item The answer NA means that the paper does not involve crowdsourcing nor research with human subjects.
        \item Including this information in the supplemental material is fine, but if the main contribution of the paper involves human subjects, then as much detail as possible should be included in the main paper. 
        \item According to the NeurIPS Code of Ethics, workers involved in data collection, curation, or other labor should be paid at least the minimum wage in the country of the data collector. 
    \end{itemize}

\item {\bf Institutional review board (IRB) approvals or equivalent for research with human subjects}
    \item[] Question: Does the paper describe potential risks incurred by study participants, whether such risks were disclosed to the subjects, and whether Institutional Review Board (IRB) approvals (or an equivalent approval/review based on the requirements of your country or institution) were obtained?
    \item[] Answer: \answerNA{} % Replace by \answerYes{}, \answerNo{}, or \answerNA{}.
    \item[] Justification: not applicable.
    \item[] Guidelines:
    \begin{itemize}
        \item The answer NA means that the paper does not involve crowdsourcing nor research with human subjects.
        \item Depending on the country in which research is conducted, IRB approval (or equivalent) may be required for any human subjects research. If you obtained IRB approval, you should clearly state this in the paper. 
        \item We recognize that the procedures for this may vary significantly between institutions and locations, and we expect authors to adhere to the NeurIPS Code of Ethics and the guidelines for their institution. 
        \item For initial submissions, do not include any information that would break anonymity (if applicable), such as the institution conducting the review.
    \end{itemize}

\item {\bf Declaration of LLM usage}
    \item[] Question: Does the paper describe the usage of LLMs if it is an important, original, or non-standard component of the core methods in this research? Note that if the LLM is used only for writing, editing, or formatting purposes and does not impact the core methodology, scientific rigorousness, or originality of the research, declaration is not required.
    %this research? 
    \item[] Answer: \answerNA{} % Replace by \answerYes{}, \answerNo{}, or \answerNA{}.
    \item[] Justification: LLMs were not used.
    \item[] Guidelines:
    \begin{itemize}
        \item The answer NA means that the core method development in this research does not involve LLMs as any important, original, or non-standard components.
        \item Please refer to our LLM policy (\url{https://neurips.cc/Conferences/2025/LLM}) for what should or should not be described.
    \end{itemize}

\end{enumerate}

%%%%%%%%%%%%%%%%%%%%%%%%%%%%%%%%%%%%%%%%%%%%%%%%%%%%%%%%%%%%

\newpage
\appendix

\part*{Supplementary Material}
The supplementary material is organized as follows. In Section \ref{sec:related_work}, we discuss papers that are closely related to our work. Section~\ref{appendix:tech_lemma} presents several technical lemmas used in our theoretical analysis. Section~\ref{appendix:examples} provides illustrative examples of $\alpha$-symmetric $(L_0, L_1)$-Lipschitz operators, while Section~\ref{appendix:convergence_analysis} contains the detailed proofs of the main convergence theorems presented in the paper. Next, in Section~\ref{appendix:equiv_formulation}, we discuss an equivalent formulation of the $\alpha$-symmetric $(L_0, L_1)$-Lipschitz condition. Finally, Section~\ref{appendix:num_exp} offers additional details on the experimental setup and results.

\tableofcontents

\newpage

% %%%%%%%%%%%%%%%%%%%%%%%%%%%%%%%%%%%%%%%%%%%%%%%%%%%%%%%%%%%%

\section{Further Related Work}\label{sec:related_work}

\textbf{Relaxing Lipschitzness.}
The $(L_0, L_1)$-smoothness assumption was first introduced by \citet{zhang2019gradient} and further developed in \citet{zhang2020improved, chen2023generalized}. More recently, improved convergence guarantees for optimization under this relaxed smoothness were obtained by \citet{gorbunov2024methods, vankov2024optimizing}.
This assumption has also been used in the analysis of modern large-scale optimization algorithms, for example, Gluon for LLM training~\citep{riabinin2025gluon}.

Beyond $(L_0, L_1)$-smoothness, several alternative notions or relaxations of smoothness have been proposed in the literature, including relative smoothness~\citep{bauschke2017descent},
glocal smoothness~\citep{fox2025glocal}, and H\"older smoothness~\citep{gorbunov2021high},
among others. The $(L_0, L_1)$-smoothness framework has recently been extended to the
variational inequality setting by \citet{pmlr-v235-vankov24a}, while \citet{xian2024delving}
analyzed min--max problems under a similarly general smoothness perspective. A recent line of work by \citet{loizou2021stochastic,gorbunov2022extragradient,beznosikov2023stochastic} studies star-cocoercive operators, which could hold even when the operator is not Lipschitz.

\textbf{On Methods for Solving VIs.}
A variety of methods have been developed to solve root-finding problems and, more generally, variational inequalities. The most basic approach is the gradient method and its stochastic counterpart~\citep{loizou2021stochastic,beznosikov2023stochastic}. More variants of the gradient method have been proposed in different settings, like the dissipative gradient method~\citep{zheng2024dissipative} and distributed methods for VIs~\citep{zhangcommunication}. However the gradient method fails to converge in simple monotone problems~\citep{gidel2018variational}.To obtain convergence guarantees in more relaxed settings (including monotone problems), \citet{korpelevich1977extragradient} introduced the \algname{EG} method, which has since been extended to adaptive~\citep{antonakopoulos2020adaptive}, stochastic~\citep{gorbunov2022stochastic, gidel2018variational, mishchenko2020revisiting, diakonikolas2021efficient, li2022convergence}, and decentralized variants~\citep{beznosikov2022decentralized}. A key drawback of \algname{EG} is that it requires two oracle calls per iteration. To address this, \citet{popov1980modification} proposed the Optimistic Gradient (\algname{OG}) method, which uses only a single oracle call. Similar to \algname{EG}, \algname{OG} has been analyzed in stochastic settings~\citep{hsieh2019convergence, gidel2018variational, daskalakis2017training, choudhury2024single, bohm2022solving}. Beyond these methods, accelerated variants have also been developed, achieving faster convergence rates~\citep{yoon2021accelerated, diakonikolas2020halpern, lee2021fast}.

\newpage
\section{Technical Lemmas}\label{appendix:tech_lemma}
In this section, we present some technical lemmas, which will be used to prove the main results of the work in subsequent sections.

\begin{lemma}
    For $a, b \in \R^d$, we have
    \begin{equation}\label{eq:inner_prod}
        2 \la a, b\ra = \|a\|^2 + \|b\|^2 - \|a - b\|^2.
    \end{equation}
\end{lemma}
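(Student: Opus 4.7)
The statement is the standard polarization identity, so the proof is essentially a one-line expansion of a squared norm. The plan is to expand $\|a-b\|^2$ using the bilinearity and symmetry of the Euclidean inner product and then rearrange.

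Concretely, I would start from the definition $\|a-b\|^2 = \la a-b, a-b\ra$ and apply linearity in each slot to obtain
\begin{equation*}
    \|a-b\|^2 = \la a, a\ra - \la a, b\ra - \la b, a\ra + \la b, b\ra = \|a\|^2 - 2\la a, b\ra + \|b\|^2,
\end{equation*}
where I used symmetry $\la a, b\ra = \la b, a\ra$ of the inner product. Rearranging this identity for $2\la a, b\ra$ immediately yields the desired equation \eqref{eq:inner_prod}.

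There is no real obstacle here: the lemma is a bookkeeping identity that will be invoked repeatedly throughout the convergence analysis (for instance, to split cross terms of the form $\la x_{k+1} - x_k, x_k - x_*\ra$ in the descent inequalities of \algname{EG}). The only thing worth being careful about is to state explicitly that $\la \cdot, \cdot\ra$ denotes the standard Euclidean inner product on $\R^d$ so that the symmetry step is justified; everything else follows from the definition of $\|\cdot\|$ as the norm induced by $\la \cdot, \cdot\ra$.
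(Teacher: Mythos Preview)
Your proof is correct and is exactly the standard expansion-and-rearrangement argument one would expect; the paper in fact states this lemma without proof, so there is nothing further to compare.
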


\begin{lemma}
    For $a, b \in \R^d$, we have
    \begin{equation}\label{eq:inequality1}
        -\|a\|^2 \leq - \frac{1}{2} \|a + b\|^2 + \|b\|^2.  
    \end{equation}
\end{lemma}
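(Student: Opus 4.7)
The plan is to reduce the inequality to a standard quadratic identity. First I would rearrange the claim by moving $-\|a\|^2$ to the right and $-\frac{1}{2}\|a+b\|^2$ to the left, yielding the equivalent statement $\tfrac{1}{2}\|a+b\|^2 \leq \|a\|^2 + \|b\|^2$. This reformulation turns the bound into a familiar two-term expansion.

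Next, I would expand $\|a+b\|^2 = \|a\|^2 + 2\langle a, b\rangle + \|b\|^2$ and apply Young's inequality $2\langle a, b\rangle \leq \|a\|^2 + \|b\|^2$ (which itself follows from $0 \leq \|a - b\|^2$, i.e., from the preceding identity \eqref{eq:inner_prod}). Substituting gives $\|a+b\|^2 \leq 2\|a\|^2 + 2\|b\|^2$, and dividing by $2$ delivers exactly the rearranged inequality. Multiplying through by $-1$ and reorganizing recovers the statement as displayed.

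There is no substantive obstacle here: the only minor point to be careful about is the direction of the inequality when multiplying by a negative constant, but the algebra is otherwise immediate. An equally short alternative would be to invoke \eqref{eq:inner_prod} directly with the substitution $a \mapsto a, b \mapsto b$ to write $\tfrac{1}{2}\|a+b\|^2 = \|a\|^2 + \|b\|^2 - \tfrac{1}{2}\|a-b\|^2 \leq \|a\|^2 + \|b\|^2$, which again yields the claim after rearrangement.
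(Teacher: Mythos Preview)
Your proof is correct. The paper itself does not supply a proof for this lemma (it is stated without proof as an elementary inequality), so there is no approach to compare against; your argument via the rearrangement $\tfrac{1}{2}\|a+b\|^2 \leq \|a\|^2 + \|b\|^2$ and the expansion $\|a+b\|^2 = 2\|a\|^2 + 2\|b\|^2 - \|a-b\|^2$ is exactly what one would expect.
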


\begin{lemma}
    For positive numbers $a, b > 0$ we have
    \begin{equation}\label{eq:AMGM_corollary}
        a + b \leq \sqrt{2} \sqrt{a^2 + b^2}
    \end{equation}
\end{lemma}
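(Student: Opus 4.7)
The plan is to reduce this to the elementary fact $(a-b)^2 \geq 0$. Since both sides of \eqref{eq:AMGM_corollary} are nonnegative for $a, b > 0$, the inequality is equivalent to its squared form, so I would first square both sides and then rearrange.

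Concretely, squaring the claim gives $(a+b)^2 \leq 2(a^2+b^2)$. Expanding the left-hand side yields $a^2 + 2ab + b^2$, so the inequality is equivalent to $0 \leq a^2 - 2ab + b^2$, i.e., $0 \leq (a-b)^2$, which holds trivially. Taking square roots (permissible since both sides are nonnegative) recovers \eqref{eq:AMGM_corollary}.

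Alternatively, one can invoke Cauchy--Schwarz directly: viewing $a+b$ as the inner product $\langle (1,1), (a,b) \rangle$ gives $a + b \leq \sqrt{1^2 + 1^2}\,\sqrt{a^2 + b^2} = \sqrt{2}\sqrt{a^2 + b^2}$, which is the desired bound. Either route is a one-line argument, so there is no genuine obstacle here; the only minor care is to note that squaring preserves the inequality because both sides are nonnegative (in fact positive, since $a, b > 0$). The positivity assumption on $a, b$ is not actually needed — the statement holds for all real $a, b$ — but it is stated in the lemma and does no harm.
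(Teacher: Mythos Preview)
Your proof is correct and essentially identical to the paper's: the paper starts from the AM--GM inequality $2ab \leq a^2 + b^2$ (which is exactly your $(a-b)^2 \geq 0$), adds $a^2+b^2$ to both sides to obtain $(a+b)^2 \leq 2(a^2+b^2)$, and takes square roots. The Cauchy--Schwarz alternative you mention is not used in the paper but is equally valid.
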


\begin{proof}
    From AM-GM inequality on $a, b > 0$ we have
    \begin{equation*}
        2ab \leq a^2 + b^2 .
    \end{equation*}
    Adding $a^2 + b^2$ on both sides we have 
    \begin{equation*}
        (a + b)^2 \leq 2(a^2 + b^2).
    \end{equation*}
    Then the result follows by taking square root on both sides.
\end{proof}

\begin{lemma}[Cauchy-Schwarz Inequality]
    For vectors $a, b \in \R^d$, we have
    \begin{equation}\label{eq:cauchy}
        \la a, b\ra \leq \|a\| \|b\|.
    \end{equation}
\end{lemma}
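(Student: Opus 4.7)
The statement to prove is the classical Cauchy--Schwarz inequality, so the plan is to give a short, self-contained argument using the non-negativity of a suitably chosen squared norm. The key observation is that for any scalar $t \in \R$ and any vectors $a, b \in \R^d$, expanding $\|a - tb\|^2 \geq 0$ yields the quadratic-in-$t$ inequality $\|a\|^2 - 2 t \la a, b\ra + t^2 \|b\|^2 \geq 0$. Since this quadratic in $t$ is non-negative everywhere, one can read off $\la a, b\ra^2 \leq \|a\|^2 \|b\|^2$ and then take square roots.

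First I would dispose of the degenerate case $b = 0$ separately: both sides of the stated inequality equal $0$, so the bound holds trivially. For the main case $b \neq 0$, I would substitute the minimizer $t^{*} = \la a, b\ra / \|b\|^2$ into the quadratic above, which immediately gives $\|a\|^2 - \la a, b\ra^2 / \|b\|^2 \geq 0$, i.e., $\la a, b \ra^2 \leq \|a\|^2 \|b\|^2$. Taking square roots yields $|\la a, b\ra| \leq \|a\| \|b\|$, and in particular $\la a, b \ra \leq \|a\| \|b\|$, which is exactly the inequality in the lemma.

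There is no real obstacle in this argument; the only mildly non-obvious step is the choice of $t^{*}$ that minimizes $\|a - t b\|^2$. An equally clean alternative would be to define the auxiliary vector $c \eqdef \|b\|^2 a - \la a, b\ra b$, verify by direct inner product that $\la c, b\ra = 0$, expand $\|c\|^2 \geq 0$, and rearrange to obtain $\la a, b \ra^2 \leq \|a\|^2 \|b\|^2$. Either route delivers the result in a handful of lines, so I would pick the $t^{*}$-optimization route for brevity.
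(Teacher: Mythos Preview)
Your proof is correct. The paper itself does not provide a proof of this lemma; it simply states the Cauchy--Schwarz inequality as a standard technical fact and uses it later without justification. Your quadratic-in-$t$ argument (and the alternative orthogonal-projection variant you mention) is the standard textbook proof and is perfectly adequate here.
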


\begin{lemma}\label{lemma:reform_integration}\cite{chen2023generalized}
    Operator $F$ is $\alpha$-symmetric $(L_0, L_1)$-Lipschitz if and only if
    \begin{equation}\label{eq:reform_integration}
        \|F(x) - F(y)\| \leq \left( L_0 + L_1 \int_{0}^1 \left\| F(\theta x + (1 - \theta)y)\right\|^{\alpha} d\theta \right) \|x - y\| \qquad \forall x, y \in \R^d.
    \end{equation}
\end{lemma}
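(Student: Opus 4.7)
The plan is to prove the biconditional by handling the two implications separately, with one direction being nearly immediate and the other requiring a partition/Riemann sum argument. Observe that for any non-negative integrable function $g:[0,1]\to\R$, we have $\int_0^1 g(\theta)\,d\theta \leq \max_{\theta\in[0,1]} g(\theta)$. Applied to $g(\theta)=\|F(\theta x+(1-\theta)y)\|^{\alpha}$, this shows that the right-hand side of the integral formulation \eqref{eq:reform_integration} is never larger than the right-hand side of \eqref{eq:(L0,L1)-Lipschitz}. Consequently, the integral formulation is the \emph{stronger} of the two bounds and immediately implies the max formulation, i.e.\ the $\alpha$-symmetric $(L_0,L_1)$-Lipschitz condition; this gives one direction of the equivalence.

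For the reverse direction, assume $F$ is $\alpha$-symmetric $(L_0,L_1)$-Lipschitz and fix arbitrary $x,y\in\R^d$. Set $z(t)=y+t(x-y)$ for $t\in[0,1]$ and consider a uniform partition $t_i=i/n$ of $[0,1]$. Telescoping,
\begin{equation*}
    F(x)-F(y)=\sum_{i=0}^{n-1}\bigl[F(z(t_{i+1}))-F(z(t_i))\bigr],
\end{equation*}
and the triangle inequality combined with \eqref{eq:(L0,L1)-Lipschitz} applied on each sub-segment $[z(t_i),z(t_{i+1})]$ (using $\|z(t_{i+1})-z(t_i)\|=\|x-y\|/n$) yields
\begin{equation*}
    \|F(x)-F(y)\|\leq L_0\|x-y\|+\frac{L_1\|x-y\|}{n}\sum_{i=0}^{n-1}\max_{s\in[t_i,t_{i+1}]}\|F(z(s))\|^{\alpha}.
\end{equation*}
Letting $n\to\infty$, the Riemann sum on the right converges to $\int_0^1\|F(z(s))\|^{\alpha}\,ds$, producing exactly \eqref{eq:reform_integration}.

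The main obstacle is justifying the passage to the limit, which requires the function $s\mapsto\|F(z(s))\|^{\alpha}$ to be (Riemann) integrable on $[0,1]$; continuity is the natural sufficient condition. I would establish this continuity by a bootstrap argument from the max formulation itself: applying \eqref{eq:(L0,L1)-Lipschitz} to pairs $z(s),z(s')$ gives $\|F(z(s))-F(z(s'))\|\leq (L_0+L_1 M^{\alpha})\|x-y\||s-s'|$, where $M=\max_{\theta\in[0,1]}\|F(z(\theta))\|$. One must first verify $M<\infty$, which follows since the max formulation applied to $y$ and any $z(\theta)$ bounds $\|F(z(\theta))\|$ in terms of $\max_{\tau\in[0,\theta]}\|F(z(\tau))\|$; a standard continuity/compactness argument on $[0,1]$ upgrades this to a uniform bound. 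With continuity in hand, the convergence of the upper Riemann sums to the Riemann integral is classical, completing the proof.
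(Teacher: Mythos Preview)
The paper does not supply its own proof of this lemma; it is stated with a citation to \cite{chen2023generalized} and used as a black box in the proofs of Theorem~\ref{thm:equiv_formulation} and Proposition~\ref{prop:equiv_formulation}. So there is nothing in the paper to compare your argument against.

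Your argument is correct and is essentially the natural one. The integral-to-max direction is immediate from $\int_0^1 g \le \max_{[0,1]} g$, and for the max-to-integral direction your partition/telescoping step is exactly right: applying \eqref{eq:(L0,L1)-Lipschitz} on each subsegment produces the upper Darboux sum $\frac{1}{n}\sum_{i}\max_{[t_i,t_{i+1}]}\|F(z(\cdot))\|^{\alpha}$, which converges to the integral once the integrand is continuous.

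One small remark on the bootstrap for $M<\infty$: the self-referential bound you extract, $M\le \|F(y)\|+(L_0+L_1 M^{\alpha})\|x-y\|$, resolves cleanly when $\alpha<1$ (sublinear growth in $M$), but for $\alpha=1$ it only gives a finite bound when $L_1\|x-y\|<1$; on longer segments you would need to subdivide first and iterate. In the paper's setting $F$ is always continuous (it is a gradient/saddle operator of a smooth objective), so $s\mapsto\|F(z(s))\|^{\alpha}$ is continuous on the compact interval $[0,1]$ directly, and both boundedness and Riemann integrability come for free without the bootstrap.
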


\begin{lemma}
    For a $2 \times 2$ symmetric matrix, the maximum eigenvalue is given by
    \begin{equation}\label{eq:eigen_2dmatrix}
        \lambda_{\max} \left( 
        \begin{bmatrix}
            a & b \\
            b & d
        \end{bmatrix}
        \right) = \frac{(a + d) + \sqrt{(a - d)^2 + 4b^2}}{2}
    \end{equation}
    where $a, b, d \in \R$.
\end{lemma}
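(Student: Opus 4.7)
The plan is to compute the eigenvalues directly from the characteristic polynomial of the matrix $M = \begin{bmatrix} a & b \\ b & d \end{bmatrix}$. First I would write down $\det(M - \lambda I) = 0$, which expands to $(a - \lambda)(d - \lambda) - b^2 = 0$, i.e., the quadratic $\lambda^2 - (a+d)\lambda + (ad - b^2) = 0$ in the variable $\lambda$.

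Next, I would solve this quadratic using the standard quadratic formula, yielding $\lambda = \frac{(a+d) \pm \sqrt{(a+d)^2 - 4(ad - b^2)}}{2}$. A short simplification of the discriminant then gives $(a+d)^2 - 4(ad - b^2) = a^2 + 2ad + d^2 - 4ad + 4b^2 = (a-d)^2 + 4b^2$, so the two eigenvalues are $\lambda_{\pm} = \frac{(a+d) \pm \sqrt{(a-d)^2 + 4b^2}}{2}$. Since the discriminant is nonnegative (ensuring both eigenvalues are real, consistent with the fact that a real symmetric matrix has real eigenvalues), the maximum eigenvalue is obtained by taking the $+$ sign, matching the claimed identity \eqref{eq:eigen_2dmatrix}.

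There is no substantive obstacle in this proof; it is a direct textbook calculation, with the only minor step being the simplification of the discriminant into the form $(a-d)^2 + 4b^2$. I would present it concisely in one short display of calculations.
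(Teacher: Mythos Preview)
Your proposal is correct and follows essentially the same approach as the paper: compute the characteristic polynomial $\lambda^2 - (a+d)\lambda + (ad-b^2)=0$, apply the quadratic formula, and take the larger root. The only cosmetic difference is that you spell out the discriminant simplification $(a+d)^2 - 4(ad-b^2) = (a-d)^2 + 4b^2$ explicitly, whereas the paper states the simplified form directly.
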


\begin{proof}
        Let $A$ be a symmetric $2 \times 2$ matrix given by
        \[
        A = \begin{bmatrix}
        a & b \\
        b & d
        \end{bmatrix}
        \]
        where $a, b, d \in \R$. Since $A$ is symmetric, it has real eigenvalues. The eigenvalues of $A$ are the roots of its characteristic polynomial:
        \[
        \det(A - \lambda I) = 
        \det\left(
        \begin{bmatrix}
        a - \lambda & b \\
        b & d - \lambda
        \end{bmatrix}
        \right)
        = (a - \lambda)(d - \lambda) - b^2.
        \]
        
        Expanding the determinant, we obtain the characteristic equation:
        \[
        \lambda^2 - (a + d)\lambda + (ad - b^2) = 0.
        \]
        
        This is a quadratic equation in $\lambda$, and its solutions are:
        \[
        \lambda = \frac{(a + d) \pm \sqrt{(a - d)^2 + 4b^2}}{2}.
        \]
        
        Thus, the maximum eigenvalue is the larger of the two roots:
        \[
        \lambda_{\max}(A) = \frac{(a + d) + \sqrt{(a - d)^2 + 4b^2}}{2}.
        \]
        
        This completes the proof.
\end{proof}

\begin{lemma}\label{lemma:jacobian_norm_quad}
    For the quadratic problem $\min_{w_1} \max_{w_2} \mathcal{L}(w_1, w_2) = \frac{1}{2} w_1^2 + w_1 w_2 - \frac{1}{2}w_2^2$, the Jacobian $\mathbf{J}(x)$ is given by
    \begin{eqnarray*}
        \mathbf{J}(x) = \begin{bmatrix}
            1 & 1 \\
            -1 & 1
        \end{bmatrix}.
    \end{eqnarray*}
    In this case we get $\| \mathbf{J}(x)\| = \sigma_{\max} \left(\mathbf{J}(x) \right) = \sqrt{2}$.
\end{lemma}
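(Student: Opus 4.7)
The approach is a direct computation in three short steps, directly applying the definitions in \eqref{eq:jacobian}. First, I would compute the first-order partials of $\mathcal{L}(w_1,w_2) = \tfrac{1}{2} w_1^2 + w_1 w_2 - \tfrac{1}{2} w_2^2$, obtaining $\nabla_{w_1}\mathcal{L} = w_1 + w_2$ and $\nabla_{w_2}\mathcal{L} = w_1 - w_2$. These are used only to confirm that the operator $F$ is well-defined; the lemma itself only asserts the form of $\mathbf{J}(x)$ and its norm.

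Next, I would compute the four second-order partials: $\nabla^2_{w_1 w_1}\mathcal{L} = 1$, $\nabla^2_{w_2 w_2}\mathcal{L} = -1$, and $\nabla^2_{w_1 w_2}\mathcal{L} = \nabla^2_{w_2 w_1}\mathcal{L} = 1$. Plugging these into the block-matrix formula for $\mathbf{J}(x)$ given in \eqref{eq:jacobian} yields the claimed constant matrix $\mathbf{J}(x) = \begin{bmatrix} 1 & 1 \\ -1 & 1 \end{bmatrix}$. Note that the Jacobian is independent of $x$, as expected since $\mathcal{L}$ is a quadratic.

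Finally, to obtain the spectral norm, I would form $\mathbf{J}(x)^{\top}\mathbf{J}(x) = \begin{bmatrix} 1 & -1 \\ 1 & 1 \end{bmatrix}\begin{bmatrix} 1 & 1 \\ -1 & 1 \end{bmatrix} = 2 I_2$, so both singular values of $\mathbf{J}(x)$ equal $\sqrt{2}$, giving $\|\mathbf{J}(x)\| = \sigma_{\max}(\mathbf{J}(x)) = \sqrt{2}$. Alternatively, one may apply \eqref{eq:eigen_2dmatrix} to $\mathbf{J}(x)^{\top}\mathbf{J}(x)$ (with $a=d=2$, $b=0$) to recover the same maximum eigenvalue 2, and then take the square root. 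There is no real obstacle here; the lemma is a verification that the horizontal-line behavior observed in the scatter plot for this quadratic example is consistent with Theorem~\ref{thm:equiv_formulation} (with $L_0 = \sqrt{2}$ and $L_1 = 0$).
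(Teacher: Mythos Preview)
Your proposal is correct and follows essentially the same route as the paper: both compute $\mathbf{J}(x)^{\top}\mathbf{J}(x) = 2\mathbf{I}$ and read off $\sigma_{\max}(\mathbf{J}(x)) = \sqrt{2}$. You add the explicit verification of the second-order partials filling in the entries of $\mathbf{J}(x)$, which the paper omits, but the core argument is identical.
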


\begin{proof}
    Note that 
    \begin{eqnarray*}
        \mathbf{J}(x)^{\top} \mathbf{J}(x) = \begin{bmatrix}
            2 & 0 \\
            0 & 2
        \end{bmatrix} = 2 \mathbf{I}.
    \end{eqnarray*}
    which has maximum eigenvalue $\sqrt{2}$. Hence, $\|\mathbf{J}(x)\| = \sigma_{\max}(\mathbf{J}(x)) = \sqrt{\lambda_{\max}(\mathbf{J}(x)^{\top} \mathbf{J}(x))}= \sqrt{2}.$
\end{proof}

\begin{lemma}[\cite{vankov2024optimizing} Example 2.2]\label{lemma:L0L1_power}
    The function $f(x) = \frac{1}{p+1} \|x\|^{p+1}$, where $p > 1$ is $(\tau_0, \tau_1)$-smooth with arbitary $\tau_1 > 0$ and $\tau_0 = \left( \frac{p-1}{\tau_1}\right)^{p-1}$ i.e.
    \begin{equation}\label{eq:L0L1_power}
        \|\nabla^2 f(x)\| \leq \left( \frac{p-1}{\tau_1}\right)^{p-1} + \tau_1 \| \nabla f(x)\|
    \end{equation}
    for any $\tau_1 > 0$.
\end{lemma}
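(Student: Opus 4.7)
The plan is to prove the claim by direct computation of the gradient and Hessian of $f(x) = \frac{1}{p+1}\|x\|^{p+1}$ followed by a one-variable minimization. First I would write $r = \|x\|$ and use the chain rule to get $\nabla f(x) = \|x\|^{p-1} x$, hence $\|\nabla f(x)\| = \|x\|^p$. Differentiating once more via the product rule yields
\begin{equation*}
\nabla^2 f(x) = \|x\|^{p-1} I + (p-1) \|x\|^{p-3} \, x x^\top,
\end{equation*}
which is a sum of two positive semidefinite matrices (since $p > 1$). The first has all eigenvalues equal to $\|x\|^{p-1}$; the second is rank one with nonzero eigenvalue $(p-1)\|x\|^{p-3}\cdot \|x\|^2 = (p-1)\|x\|^{p-1}$. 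Since they are simultaneously diagonalizable in a basis containing $x/\|x\|$, their operator norms add on the common top eigenvector, giving $\|\nabla^2 f(x)\| = p\,\|x\|^{p-1}$.

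The second step reduces \eqref{eq:L0L1_power} to the scalar inequality
\begin{equation*}
p\, r^{p-1} \;\leq\; \left(\tfrac{p-1}{\tau_1}\right)^{p-1} + \tau_1\, r^p \qquad \forall\, r \geq 0,
\end{equation*}
where I substituted $r = \|x\|$ and used the explicit formula for $\|\nabla f(x)\|$. I would prove this by defining $g(r) \eqdef \tau_1 r^p + \bigl((p-1)/\tau_1\bigr)^{p-1} - p\, r^{p-1}$ and computing $g'(r) = p\, r^{p-2}\bigl(\tau_1 r - (p-1)\bigr)$. This shows that $g$ has a unique interior critical point at $r^\star = (p-1)/\tau_1$, which is a global minimum since $g$ is decreasing on $(0, r^\star)$ and increasing on $(r^\star, \infty)$. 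A quick evaluation shows $g(r^\star) = \bigl((p-1)/\tau_1\bigr)^{p-1}\bigl[(p-1) - p + 1\bigr] = 0$, so $g(r) \geq 0$ on $[0,\infty)$, which yields the claim.

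Alternatively, one could obtain the inequality in a single line by applying Young's inequality with conjugate exponents $p/(p-1)$ and $p$ to the product $p\, r^{p-1} = \bigl(\tau_1^{(p-1)/p} r^{p-1}\bigr)\cdot\bigl(p\,\tau_1^{-(p-1)/p}\bigr)$, but this produces slightly different constants, so I prefer the calculus argument because it matches the constants in the statement exactly and also reveals that the inequality is tight at $r = (p-1)/\tau_1$. There is no real obstacle here: the only mildly delicate point is justifying the operator-norm computation of the Hessian, which is immediate once one notes that $\|x\|^{p-1} I$ and $(p-1)\|x\|^{p-3} xx^\top$ share $x/\|x\|$ as an eigenvector.
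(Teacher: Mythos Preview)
Your proposal is correct. The paper itself does not supply a proof of this lemma: it is stated in the technical-lemmas appendix with a citation to \cite{vankov2024optimizing}, Example~2.2, and then invoked as a black box in the verification of Examples~3 and~4. So there is nothing in the paper to compare against beyond the bare statement.

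Your argument is a clean, self-contained derivation. The Hessian computation and its operator norm $p\|x\|^{p-1}$ are correct (the two summands share $x/\|x\|$ as their top eigenvector, as you note), and the scalar reduction followed by the one-variable minimization at $r^\star = (p-1)/\tau_1$ is exactly right; your check that $g(r^\star)=0$ also shows the constant $\tau_0 = ((p-1)/\tau_1)^{p-1}$ is sharp for this bound. The only cosmetic point is that at $x=0$ the Hessian formula $\|x\|^{p-1}I + (p-1)\|x\|^{p-3}xx^\top$ is indeterminate as written, but since $p>1$ the Hessian extends continuously to $0$ there and the inequality holds trivially.
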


\begin{lemma}[Equivalence of $L_q$ Norm]
    For any vector $x \in \R^d$, and $0 < r < q$, we have
    \begin{equation}\label{eq:holder_1}
        \|x\|_q \leq \|x\|_r \leq d^{\frac{1}{r} - \frac{1}{q}}\|x\|_q.
    \end{equation}
    In particular, for any $a, b \in \R$ and $0 < r < q$ we have
    \begin{equation}\label{eq:holder_2}
        \left( a^q + b^q \right)^{\frac{1}{q}} \leq \left( a^r + b^r \right)^{\frac{1}{r}}
    \end{equation}
    and 
    \begin{equation}\label{eq:holder_3}
        \left( a^r + b^r \right)^{\frac{1}{r}} \leq 2^{\frac{1}{r} - \frac{1}{q}}\left( a^q + b^q \right)^{\frac{1}{q}}
    \end{equation}
\end{lemma}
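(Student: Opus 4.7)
The plan is to establish \eqref{eq:holder_1} first by proving the two inequalities separately with standard $\ell_p$-norm techniques, then obtain \eqref{eq:holder_2} and \eqref{eq:holder_3} as immediate corollaries by specializing to $d = 2$ and $x = (a, b)$ (implicitly with $a, b \geq 0$, since otherwise $a^r$ need not be well-defined for non-integer $r$). Nothing in the statement depends on any earlier result of the paper, so the argument is purely combinatorial.

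For the left inequality $\|x\|_q \leq \|x\|_r$, the idea is to normalize. Assuming $x \neq 0$, set $y = x/\|x\|_r$, so that $\|y\|_r = 1$. This forces $|y_i| \leq 1$ for every coordinate $i$, and since $q > r$ we get $|y_i|^q \leq |y_i|^r$ pointwise. Summing over $i$ yields $\|y\|_q^q \leq \|y\|_r^r = 1$, hence $\|y\|_q \leq 1$, which after rescaling gives $\|x\|_q \leq \|x\|_r$.

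For the right inequality $\|x\|_r \leq d^{1/r - 1/q}\|x\|_q$, the plan is to invoke Hölder's inequality with conjugate exponents $p = q/r > 1$ and $p' = q/(q-r)$. Writing $|x_i|^r = |x_i|^r \cdot 1$ and applying Hölder coordinate-wise gives
\[
\|x\|_r^r \;=\; \sum_{i=1}^d |x_i|^r \cdot 1 \;\leq\; \left( \sum_{i=1}^d |x_i|^{r \cdot q/r} \right)^{r/q} \left( \sum_{i=1}^d 1^{q/(q-r)} \right)^{(q-r)/q} \;=\; \|x\|_q^r \cdot d^{(q-r)/q}.
\]
Taking $r$-th roots completes \eqref{eq:holder_1}, and then setting $d = 2$, $x = (a, b)$ immediately yields \eqref{eq:holder_2} and \eqref{eq:holder_3}. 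There are no real technical obstacles; the only delicate point is the implicit sign convention on $a, b$, which I would interpret as $a, b \geq 0$ for the $r$-th powers to be meaningful.
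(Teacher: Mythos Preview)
Your proof is correct and essentially matches the paper's approach: the paper's own proof is a one-line sketch stating that \eqref{eq:holder_1} follows from H\"older's inequality and that \eqref{eq:holder_2}, \eqref{eq:holder_3} follow by taking $d=2$, $x=(a,b)$. You simply supply the standard details the paper omits; the only minor difference is that you prove the left inequality by normalization rather than appealing to H\"older directly, but this is an equivalent and equally standard route.
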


\begin{proof}
    The proof of \eqref{eq:holder_1} follows from Holder's Inequality. \eqref{eq:holder_2} and \eqref{eq:holder_3} follows from \eqref{eq:holder_1} with $d = 2$ and $x = (a, b)$.
\end{proof}

\newpage
\section{Further Examples of $\alpha$-Symmetric $(L_0, L_1)$-Lipschitz Operators}\label{appendix:examples}
In this section, we provide further examples of operators that satisfy the $\alpha$-symmetric $(L_0, L_1)$-Lipschitz assumption. We first start with the details of Examples 2 and 3 from Section \ref{sec:examples}, then we provide three more examples for the min-max optimization problem and $N$-player game that satisfy the assumption.\\
\newline
\textbf{Example 2:}\label{subsec:example2} Here we consider the operator $F(x) = (u_1^2, u_2^2)$ for $x = (u_1, u_2)$. Then for $y = (v_1, v_2)$, we have
\begin{eqnarray*}
    \|F(x) - F(y)\| & = & \left\|\left(u_1^2 - v_1^2, u_2^2 - v_2^2 \right) \right\| \\
    & = & \left( \left( u_1^2 - v_1^2\right)^2 + \left(u_2^2 - v_2^2 \right)^2 \right)^{\nicefrac{1}{2}} \\
    & = & \left( \left( u_1 - v_1\right)^2 \left( u_1 + v_1\right)^2 + \left(u_2 - v_2 \right)^2 \left(u_2 + v_2 \right)^2 \right)^{\nicefrac{1}{2}} \\
    & \leq & \left( (u_1 - v_1)^4 + (u_2 - v_2)^4 \right)^{\nicefrac{1}{4}} \left( (u_1 + v_1)^4 + (u_2 + v_2)^4 \right)^{\nicefrac{1}{4}} \\
    & \leq & \left( (u_1 - v_1)^2 + (u_2 - v_2)^2 \right)^{\nicefrac{1}{2}} \left( (u_1 + v_1)^4 + (u_2 + v_2)^4 \right)^{\nicefrac{1}{4}} \\
    & = &  \left( (u_1 + v_1)^4 + (u_2 + v_2)^4 \right)^{\nicefrac{1}{4}} \|x - y\| \\
    & = & 2 \left(\left(\frac{u_1 + v_1}{2} \right)^4 + \left(\frac{u_2 + v_2}{2} \right)^4 \right)^{\nicefrac{1}{4}} \|x - y\| \\
    & = & 2 \left\| \left( \frac{u_1 + v_1}{2}\right)^2, \left( \frac{u_2 + v_2}{2}\right)^2 \right\|^{\nicefrac{1}{2}} \|x - y\| \\
    & = & 2 \left\| F \left(\frac{x+y}{2} \right)\right\|^{\nicefrac{1}{2}} \|x - y\| \\
    & \leq & 2 \max_{\theta \in [0, 1]} \left\|  F \left( \theta x + (1 - \theta) y \right)\right\|^{\nicefrac{1}{2}} \|x - y\|.
\end{eqnarray*}
Here, the first inequality follows from the Cauchy-Schwarz inequality. This completes the proof of $\frac{1}{2}$-symmetric-$(0, 2)$ Lipschitz property of $F$. Now, we consider the vectors $x = \alpha \mathbf{1}_2$ and $y = \mathbf{1}_2$ where $\mathbf{1}_2 = (1, 1)$. Then we have 
\begin{eqnarray*}
    \frac{\| F(x) - F(y)\|}{\| x - y\|} & = & \frac{\sqrt{(\alpha^2 - 1)^2 + (\alpha^2 - 1)^2}}{\sqrt{(\alpha - 1)^2 + (\alpha - 1)^2}} \\
    & = & \frac{\sqrt{2(\alpha^2 - 1)^2}}{\sqrt{2(\alpha - 1)^2}} \\
    & = & \sqrt{\frac{(\alpha - 1)^2 (\alpha + 1)^2}{(\alpha - 1)^2}} \\ 
    & = & \lvert \alpha + 1 \rvert.
\end{eqnarray*}
Therefore, 
\begin{eqnarray*}
    \lim_{\alpha \to \infty} \frac{\| F(x) - F(y)\|}{\| x - y\|} & = & \lim_{\alpha \to \infty} \lvert \alpha + 1 \rvert \\
    & = & \infty
\end{eqnarray*}
This proves that $F$ is not Lipschitz.

\newpage
\textbf{Example 3.} Consider the min-max optimization problem 
\begin{equation}
    \min_{w_1} \max_{w_2} \mathcal{L}(w_1, w_2) = \frac{1}{p+1} \|w_1\|^{p+1} + w_1^\top \B w_2 - \frac{1}{p+1} \|w_2\|^{p+1}.
    \label{bjaskxa}
\end{equation}
    Then the corresponding operator $F$ defined in \eqref{eq:minmax_operator} is $1$-symmetric $\left(2\tau_0 + \|\M\|, 2^{\frac{2p^2 - 1}{p^2}}\tau_1 \right)$-Lipschitz with any arbitary $\tau_1 > 0$ and $\tau_0 = \left( \frac{p-1}{\tau_1}\right)^{p-1}$
    where $\M = \begin{bmatrix}
            0 & \B \\
            -\B^\top & 0
        \end{bmatrix}$. Moreover, $F$ is not Lipschitz for any finite $L$.

\begin{proof}
    For the simplicity of the proof, we define the function $f(w) \eqdef \frac{1}{p+1} \|w\|^{p+1}$. The following properties of this function will be used to prove our result.
    
    \textbf{1. Gradient of $f(w)$:} The gradient of $f(w) = \frac{1}{p+1}\|w\|^{p+1} = \frac{1}{p+1} \left(\|w\|^2 \right)^{\frac{p+1}{2}}$ is given by
    \begin{equation}\label{eq:minmax_example6_1by16}
        \nabla f(w) = \frac{1}{p+1} \cdot \frac{p+1}{2} \left(\|w\|^2 \right)^{\frac{p+1}{2} - 1} \cdot \nabla \left( \|w\|^2 \right) = \frac{1}{2} \left( \|w\|^2 \right)^{\frac{p-1}{2}} \cdot 2w = \|w\|^{p-1} w.
    \end{equation}
    Here, the first equality follows from chain rule.

    \textbf{2. $f$ is $(\tau_0, \tau_1)$-smooth:} From Lemma \ref{lemma:L0L1_power}, we know $f$ is $(\tau_0, \tau_1)$-smooth. Choose arbitary $\tau_1 > 0$, then from Lemma \ref{lemma:L0L1_power}, $f$ satisfies 
    \begin{equation}\label{eq:minmax_example6_1by8}
        \|\nabla^2 f(w)\| \overset{\eqref{eq:L0L1_power}}{\leq} \tau_0 + \tau_1 \| \nabla f(w)\|
    \end{equation}
    where $\tau_0 = \left( \frac{p-1}{\tau_1}\right)^{p-1}$. 

    Now, we will get back to the original min-max problem. Note that, using the definition of $f(w)$, we can rewrite the objective function in \eqref{bjaskxa} as 
    \begin{equation}\label{eq:minmax_example6_1by4}
        \mathcal{L}(w_1, w_2) = f(w_1) + w_1^\top \B w_2 - f(w_2).    
    \end{equation}
    Then the operator $F$ corresponding to the min-max problem \eqref{bjaskxa} is given by
    \begin{equation}\label{eq:minmax_example6_1by2}
        F(x) = \begin{bmatrix}
            \nabla_{w_1} \mathcal{L}(w_1, w_2)\\
            -\nabla_{w_2} \mathcal{L}(w_1, w_2)
        \end{bmatrix} \overset{\eqref{eq:minmax_example6_1by4}}{=} \begin{bmatrix}
            \nabla f(w_1) + \B w_2 \\
            \nabla f(w_2) - \B^\top w_1
        \end{bmatrix} \overset{\eqref{eq:minmax_example6_1by16}}{=} \begin{bmatrix}
            \|w_1\|^{p-1}w_1 + \B w_1\\
            \|w_2\|^{p-1}w_2 - \B^\top w_2
        \end{bmatrix}.
    \end{equation}
    Moreover, the Jacobian matrix corresponding to this min-max problem~\eqref{bjaskxa} is 
    \begin{equation}\label{eq:minmax_example6_0}
        \mathbf{J}(x) = \begin{bmatrix}
            \nabla^2_{w_1 w_1} \mathcal{L}(w_1, w_2) & \nabla^2_{w_2 w_1} \mathcal{L}(w_1, w_2) \\
            -\nabla^2_{w_1 w_2} \mathcal{L}(w_1, w_2) & -\nabla^2_{w_2 w_2} \mathcal{L}(w_1, w_2)
            \end{bmatrix}
            \overset{\eqref{eq:minmax_example6_1by4}}{=} \begin{bmatrix}
                \nabla^2 f(w_1) & \B \\
                -\B^\top & \nabla^2 f(w_2)
            \end{bmatrix}.
    \end{equation}
    Furthermore, note that, we can break the operator $F$ from \eqref{eq:minmax_example6_1by2} as $F(x) = H(x) + \M x$ with 
    \begin{eqnarray}\label{eq:minmax_example6_1}
        H(x) = \begin{bmatrix}
            \nabla f(w_1)\\
            \nabla f(w_2)
        \end{bmatrix} = \begin{bmatrix}
            \|w_1\|^{p-1}w_1 \\
            \|w_2\|^{p-1}w_2
        \end{bmatrix}
    \end{eqnarray}
    and $\M = \begin{bmatrix}
            0 & \B \\
            -\B^\top & 0
        \end{bmatrix}.$
    Then the Jacobian $\mathbf{J}(x)$ of this min-max problem satisfies
    \begin{eqnarray}\label{eq:minmax_example6_2}
        \| \mathbf{J}(x) \| & \overset{\eqref{eq:minmax_example6_0}}{=} & \left\| \begin{bmatrix}
            \nabla^2 f(w_1) & \B \\
            -\B^\top & \nabla^2 f(w_2)
        \end{bmatrix}\right\| \notag\\
        & = & \left\| \begin{bmatrix}
            \nabla^2 f(w_1) & 0 \\
            0 & \nabla^2 f(w_2)
        \end{bmatrix} + \M \right\| \notag\\ 
        & \leq & \left\| \begin{bmatrix} 
            \nabla^2 f(w_1) & 0 \\
            0 & \nabla^2 f(w_2)
        \end{bmatrix}\right\| + \left\| \M \right\| \notag \\
        &\leq& \max \left\{ \|\nabla^2 f(w_1)\|, \|\nabla^2 f(w_2)\| \right\} + \left\| \M \right\| \notag\\
        & \leq & \|\nabla^2 f(w_1)\| + \|\nabla^2 f(w_2)\| + \left\| \M \right\|  \notag\\
        & \overset{\eqref{eq:minmax_example6_1by8}}{\leq} & \tau_0 + \tau_1 \|\nabla f(w_1)\|  +  \tau_0 + \tau_1 \| \nabla f(w_2)\| + \left\| \M \right\|  \notag\\
        & = & 2 \tau_0 + \left\| \M \right\| + \tau_1 \left( \|\nabla f(w_1)\| + \|\nabla f(w_2)\|\right)  \notag\\
        & \overset{\eqref{eq:AMGM_corollary}}{\leq} & (2 \tau_0 + \left\| \M \right\|) + \sqrt{2} \tau_1 \sqrt{\|\nabla f(w_1)\|^2 + \|\nabla f(w_2)\|^2} \notag\\
        & \overset{\eqref{eq:minmax_example6_1}}{=} & (2\tau_0 + \|\M \|) + \sqrt{2} \tau_1 \|H(x)\|.
    \end{eqnarray}
    where $\tau_1 > 0$ is arbitary and $\tau_0 = \left( \frac{p-1}{\tau_1}\right)^{p-1}$. Above, the second inequality follows from the fact that the maximum eigenvalue of a block diagonal matrix is less than or equal to the maximum eigenvalues of it's diagonal matrices. Now, we want to find a bound on $\|H(x)\|$ in terms of $\|F(x)\|$. Towards that end, we will first find the relation between $\|H(x)\|$ and $\la F(x), x\ra$. Note that 
    \begin{eqnarray}\label{eq:minmax_example6_3}
        \la F(x), x\ra & = & \la H(x), x\ra + \la \M x, x\ra \notag\\ 
        & = & \la H(x), x\ra + \la \begin{bmatrix}
            \B w_2\\
            -\B^\top w_1
        \end{bmatrix}, \begin{bmatrix}
            w_1\\
            w_2
        \end{bmatrix}\ra \notag\\
        & = & \la H(x), x\ra \notag\\
        & \overset{\eqref{eq:minmax_example6_1}}{=} & \la \begin{bmatrix}
            \|w_1\|^{p-1}w_1 \\
            \|w_2\|^{p-1}w_2
        \end{bmatrix}, \begin{bmatrix}
            w_1 \\
            w_2
        \end{bmatrix}\ra \notag\\
        & = & \|w_1\|^{p+1} + \|w_2\|^{p+1}
    \end{eqnarray}
    and 
    \begin{eqnarray}\label{eq:minmax_example6_4}
        \| H(x)\| & \overset{\eqref{eq:minmax_example6_1}}{=} & \left\| \begin{bmatrix}
            \|w_1\|^{p-1}w_1 \\
            \|w_2\|^{p-1}w_2
        \end{bmatrix} \right\| \notag\\
        & = & \left( \left(\|w_1\|^{p-1} \right)^2 \|w_1\|^2 + \left(\|w_2\|^{p-1} \right)^2 \|w_2\|^2 \right)^{\frac{1}{2}} \notag \\
        & = & \left(\|w_1\|^{2p} + \|w_2\|^{2p} \right)^{\frac{1}{2}}
    \end{eqnarray}
    Then note that
    \begin{eqnarray*}
        \| H(x)\| 
        & \overset{\eqref{eq:minmax_example6_4}}{=} &\left(\|w_1\|^{2p} + \|w_2\|^{2p} \right)^{\frac{1}{2}} \\
        & = & \left( \left(\|w_1\|^{2p} + \|w_2\|^{2p} \right)^{\frac{1}{2p}} \right)^{p} \\
        & \overset{\eqref{eq:holder_2}}{\leq} & \left( \left(\|w_1\|^{p+1} + \|w_2\|^{p+1} \right)^{\frac{1}{p+1}} \right)^{p} \\
        & = & \left(\|w_1\|^{p+1} + \|w_2\|^{p+1} \right)^{\frac{p}{p+1}} \\
        & \overset{\eqref{eq:minmax_example6_3}}{=} & \la F(x), x\ra^{\frac{p}{p+1}} \\
        & \overset{\eqref{eq:cauchy}}{\leq} & \|F(x)\|^{\frac{p}{p+1}} \|x\|^{\frac{p}{p+1}} \\
        & = & \|F(x)\|^{\frac{p}{p+1}} \left( \left( \|w_1\|^2 + \|w_2\|^2 \right)^{\frac{1}{2}} \right)^{\frac{p}{p+1}} \\
        & \overset{\eqref{eq:holder_3}}{\leq} & \|F(x)\|^{\frac{p}{p+1}} \left( 2^{\frac{1}{2} - \frac{1}{2p}} \left( \|w_1\|^{2p} + \|w_2\|^{2p} \right)^{\frac{1}{2p}} \right)^{\frac{p}{p+1}} \\
        & = & 2^{\frac{p-1}{2p}}\|F(x)\|^{\frac{p}{p+1}} \left(  \left( \|w_1\|^{2p} + \|w_2\|^{2p} \right)^{\frac{1}{2p}} \right)^{\frac{p}{p+1}} \\
        & = & 2^{\frac{p-1}{2p}}\|F(x)\|^{\frac{p}{p+1}} \left(  \left( \|w_1\|^{2p} + \|w_2\|^{2p} \right)^{\frac{1}{2}} \right)^{\frac{1}{p+1}} \\
        & \overset{\eqref{eq:minmax_example6_4}}{=} & 2^{\frac{p-1}{2p}}\|F(x)\|^{\frac{p}{p+1}} \|H(x)\|^{\frac{1}{p+1}}
    \end{eqnarray*}
    Therefore, dividing both sides by $\|H(x)\|^{\frac{1}{p+1}}$ we get
    \begin{equation*}
        \|H(x)\|^{\frac{p}{p+1}} \leq  2^{\frac{p-1}{2p}}\|F(x)\|^{\frac{p}{p+1}}.
    \end{equation*}
    Thus raising both sides to the power $\frac{p+1}{p}$ we get
    \begin{equation}\label{eq:minmax_example6_5}
        \|H(x)\| \leq 2^{\frac{p^2-1}{2p^2}} \|F(x)\|.
    \end{equation}
    Combining this bound on $\|H(x)\|$ with \eqref{eq:minmax_example6_2} we get
    \begin{eqnarray*}
        \|\mathbf{J}(x)\| & \overset{\eqref{eq:minmax_example6_2}}{\leq} & (2\tau_0 + \|\M \|) + 2^{\frac{1}{2}} \tau_1 \|H(x)\| \\ 
        & \overset{\eqref{eq:minmax_example6_5}}{\leq} & (2\tau_0 + \|\M\|) + 2^{\frac{1}{2} + \frac{p^2-1}{2p^2}} \tau_1 \| F(x)\| \\
        & = & (2\tau_0 + \|\M\|) + 2^{\frac{2p^2-1}{2p^2}} \tau_1 \| F(x)\|.
    \end{eqnarray*}
    Thus, the result follows using Theorem \ref{thm:equiv_formulation}.  
    
    Next we show $F$ is not Lipschitz. Consider, $x = (tw_1, 0)$ and $y = (0, 0)$ with $\|w_1\| = 1$. Then we get
    \begin{eqnarray*}
        \frac{\|F(x) - F(y)\|}{\|x - y\|} = \frac{1}{p+1} \frac{t^{p+1}}{t} = \frac{t^{p}}{p+1}.
    \end{eqnarray*}
    As we take $t \to \infty$, we get $\frac{\|F(x) - F(y)\|}{\|x - y\|} \to \infty$. Hence, there doesn't exist any finite bound on $\frac{\|F(x) - F(y)\|}{\|x - y\|} $ or $F$ is not $L$-Lipschitz.
\end{proof}

\textbf{Example 4:} Consider the min-max problem 
$$\min_{w_1} \max_{w_2} \mathcal{L}(w_1, w_2) = \frac{1}{p+1}\|w_1\|^{p+1} - \frac{1}{p+1}\|w_2\|^{p+1}.$$ Then the corresponding operator $F$ defined in \eqref{eq:minmax_operator} is $1$-symmetric $(L_0, L_1)$-Lipschitz where $L_1 = \sqrt{2}\tau_1$ and $L_0 = 2 \left( \frac{p-1}{\tau_1}\right)^{p-1}$ for any $\tau_1 > 0$. Moreover, $F$ is not $L$-Lipschitz for any finite $L$.

\begin{proof}
    Define $f(w) = \frac{1}{p+1}\|w\|^{p+1}$. From Lemma \ref{lemma:L0L1_power}, we know $f$ is $(\tau_0, \tau_1)$-smooth. Choose arbitary $\tau_1 > 0$, then from Lemma \ref{lemma:L0L1_power}, $f$ satisfies 
    \begin{equation}\label{eq:minmax_example4_123}
        \|\nabla^2 f(w)\| \overset{\eqref{eq:L0L1_power}}{\leq} \tau_0 + \tau_1 \| \nabla f(w)\|
    \end{equation}
    where $\tau_0 = \left( \frac{p-1}{\tau_1}\right)^{p-1}$. Then we have $\mathcal{L}(w_1, w_2) = f(w_1) - f(w_2)$ and the corresponding operator and Jacobian are given by
    \begin{eqnarray*}
    F(x) = \begin{bmatrix}
        \nabla f(w_1) \\
        \nabla f(w_2)
    \end{bmatrix}
    \quad \text{and} \quad 
    \mathbf{J}(x) = \begin{bmatrix}
        \nabla^2 f(w_1) & 0 \\
        0 & \nabla^2 f(w_2)
    \end{bmatrix},
    \end{eqnarray*}
    respectively. Note that the Jacobian matrix $\mathbf{J}(x)$ is block diagonal. Hence, the maximum eigenvalue of $\mathbf{J}(x)$ is less than the maxiumum eigenvalue of both $\nabla^2 f(w_1)$ and $\nabla^2 f(w_2)$. Therefore,we get 
    \begin{eqnarray*}
        \left\| \mathbf{J}(x) \right\| & \leq & \max \left\{ \|\nabla^2 f(w_1)\|, \|\nabla^2 f(w_2)\| \right\} \\
        & \leq & \|\nabla^2 f(w_1)\| + \|\nabla^2 f(w_2)\| \\
        & \overset{\eqref{eq:minmax_example4_123}}{\leq} & \tau_0 + \tau_1 \|\nabla f(w_1)\|  +  \tau_0 + \tau_1 \| \nabla f(w_2)\| \\
        & = & 2 \tau_0 + \tau_1 \left( \|\nabla f(w_1)\| + \|\nabla f(w_2)\|\right) \\
        & \overset{\eqref{eq:AMGM_corollary}}{\leq} & 2 \tau_0 + \sqrt{2} \tau_1 \sqrt{\|\nabla f(w_1)\|^2 + \|\nabla f(w_2)\|^2} \\
        & = & 2\tau_0 + \sqrt{2} \tau_1 \| F(x)\|.
    \end{eqnarray*}
    Then $F$ is $1$-symmetric $(L_0, L_1)$-Lipschitz from Theorem \ref{thm:equiv_formulation} with $L_1 = \sqrt{2}\tau_1$ and $L_0 = 2 \tau_0 = 2 \left( \frac{p-1}{\tau_1}\right)^{p-1}$ for any $\tau_1 > 0$. 
    
    Next we show $F$ is not Lipschitz. Consider, $x = (tw_1, 0)$ and $y = (0, 0)$ with $\|w_1\| = 1$. Then we get
    \begin{eqnarray*}
        \frac{\|F(x) - F(y)\|}{\|x - y\|} = \frac{1}{p+1} \frac{t^{p+1}}{t} = \frac{t^{p}}{p+1}.
    \end{eqnarray*}
    As we take $t \to \infty$, we get $\frac{\|F(x) - F(y)\|}{\|x - y\|} \to \infty$. Hence, there doesn't exist any finite bound on $\frac{\|F(x) - F(y)\|}{\|x - y\|} $. Therefore, $F$ is not $L$-Lipschitz. 

\end{proof}

\textbf{Example 5:}
Min-max optimization problems can be studied using operators. Here we consider one such example, the bilinearly coupled min-max optimization~\cite{chambolle2016ergodic} problem
\begin{equation*}
    \min_{\|w_1 \| \leq R} \max_{\|w_2 \| \leq R} \mathcal{L}(w_1, w_2) \eqdef f(w_1) + w_1^{\top} \B w_2 - g(w_2)
\end{equation*}
for matrix $\B \in \R^{d \times d}$ and functions $f, g: \R^d \to \R$. The associated operator for this problem is given by $F(x) = H(x) + \M x$ where
\begin{eqnarray*}
\textstyle
    H(x) = \begin{bmatrix}
        \nabla f(w_1) \\
        \nabla g(w_2)
    \end{bmatrix} \quad \text{ and } \quad \M = \begin{bmatrix}
        0 & \B \\
        - \B^{\top} & 0
    \end{bmatrix}.
\end{eqnarray*}
If $f, g$ are individually $(L_0, L_1)$-smooth~\cite{zhang2019gradient}, we show that
\begin{eqnarray*}
\textstyle
    \|F(x) - F(y)\| & \leq & \left( 2 L_0 + (1 + 2 L_1 R) \| \M \|+ \sqrt{2} L_1 \|F(x)\| \right) \|x - y\|.
\end{eqnarray*}
Thus, $F$ is $1$-symmetric $ \left(2 L_0 + (1 + 2 L_1 R) \| \M \|, \sqrt{2} L_1 \right)$-Lipschitz. Consider the min-max problem
\begin{equation*}
    \min_{\|w_1\| \leq R} \max_{\|w_2\| \leq R} \mathcal{L}(w_1, w_2) \eqdef f(w_1) + w_1^{\top} \B w_2 - g(w_2).
\end{equation*}
where $f, g$ are $(L_0, L_1)$-smooth. Then for $x = (w_1, w_2)$ we have
\begin{eqnarray*}
    F(x) = \begin{pmatrix}
        \nabla f(w_1) \\
        \nabla g(w_2)
    \end{pmatrix} + \begin{pmatrix}
        0 & \B \\
        - \B^{\top} & 0
    \end{pmatrix} \begin{pmatrix}
        w_1 \\
        w_2
    \end{pmatrix} = H(x) + \M x
\end{eqnarray*}
where $\M$ is a matrix and $H$ is an operator. Now note that for $x = (w_1, w_2)$ and $y = (v_1, v_2)$ we get
\begin{eqnarray*}
    \|H(x) - H(y)\| & = &\left\| \begin{bmatrix}
        \nabla f(w_1) - \nabla f(v_1) \\
        \nabla g(w_2) - \nabla g(v_2)
    \end{bmatrix}\right\| \\
    & = & \left( \| \nabla f(w_1) - \nabla f(v_1)\|^2 + \| g(w_2) - g(v_2)\|^2 \right)^{\nicefrac{1}{2}} \\
    & \leq & \left( \left(L_0 + L_1 \| \nabla f(w_1)\| \right)^2 \| w_1 - v_1\|^2 + \left(L_0 + L_1 \| \nabla g(w_2)\| \right)^2\| w_2 - v_2 \|^2 \right)^{\nicefrac{1}{2}} \\
    & \leq & \left( \left(L_0 + L_1 \| \nabla f(w_1)\| \right)^4 + \left(L_0 + L_1 \| \nabla g(w_2)\| \right)^4 \right)^{\nicefrac{1}{4}} \left( \|w_1 - v_1\|^4 + \|w_2 - v_2\|^4 \right)^{\nicefrac{1}{4}} \\
    & \leq & \left( \left(L_0 + L_1 \| \nabla f(w_1)\| \right)^2 + \left(L_0 + L_1 \| \nabla g(w_2)\| \right)^2 \right)^{\nicefrac{1}{2}} \left( \|w_1 - v_1\|^2 + \|w_2 - v_2\|^2 \right)^{\nicefrac{1}{2}} \\
    & \leq & \left( 4 L_0^2 + 2L_1^2 \left(\| \nabla f(w_1) \|^2 + \| \nabla g(w_2)\|^2 \right) \right)^{\nicefrac{1}{2}} \left( \|w_1 - v_1\|^2 + \|w_2 - v_2\|^2 \right)^{\nicefrac{1}{2}} \\
    & = & \left( 4 L_0^2 + 2L_1^2 \|H(x)\|^2 \right)^{\nicefrac{1}{2}} \|x - y\| \\
    & \leq & \left( 2 L_0 + \sqrt{2} L_1 \|H(x)\| \right) \|x - y\|
\end{eqnarray*}
Therefore, using the above inequality, we derive
\begin{eqnarray*}
    \|F(x) - F(y)\| & \leq & \|H(x) - H(y)\| + \|\M x - \M y\| \\
    & \leq & \left( 2 L_0 + \sqrt{2} L_1 \|H(x)\| \right) \|x - y\|+ \|\M\| \|x - y\| \\
    & \leq & \left( 2 L_0 + \|\M\| + \sqrt{2} L_1 \|H(x)\| \right) \|x - y\| 
\end{eqnarray*}
Now using $\|H(x)\| = \|H(x) + \M x - \M x\| \leq \|F(x)\| + \| \M \| \|x\| \leq \|F(x)\| + \sqrt{2}R \|\M \|$ we get
\begin{eqnarray*}
    \|F(x) - F(y)\| & \leq & \left( 2 L_0 + (1 + 2L_1 R)\|\M\| + \sqrt{2} L_1 \|F(x)\| \right) \|x - y\|.
\end{eqnarray*}

\textbf{Example 6:}
% \label{subsec:example4}
In this example, we consider an $N$-player game~\cite{balduzzi2018mechanics, loizou2021stochastic, yoon2025multiplayer}, where each player $i \in [N]$ selects an action $w_i \in \R^{d_i}$, and the joint action vector of all players is denoted as $x = (w_1, w_2, \cdots, w_N) \in \R^{d_1 + \cdots d_N}$. Each player $i$ aims to minimize their loss function $f_i$ for their action $w_i$. The objective is to find an equilibrium $x_* = (w_{1*}, w_{2*}, \cdots, w_{N*})$ such that  
\begin{eqnarray*}
    w_{i*} = \argmin_{w_i \in \R^{d_i}} f_i (w_i, w_{-i*}).
\end{eqnarray*}
Here we abuse the notation to denote $w_{-i} = (w_1, \cdots, w_{i - 1}, w_{i+1}, \cdots, w_N)$ and $f_i(w_i, w_{-i}) = f_i (w_1, \cdots, w_N)$. When the functions $f_i$ are convex, this equilibrium corresponds to solving $F(x_*) = 0$, where the operator $F$ is defined as  
\begin{eqnarray*}
    F(x) = \left( \nabla_1 f_1 (x), \nabla_2 f_2 (x), \dots, \nabla_N f_N(x) \right).
\end{eqnarray*}

In case each of these partial gradients $\nabla_i f_i$ are $(L_0, L_1)$-Lipschitz i.e.
\begin{eqnarray*}
    \| \nabla_i f_i(x) - \nabla_i f_i (y)\| & \leq & (L_0 + L_1 \| \nabla_i f_i (x)\|) \| x - y\|
\end{eqnarray*}
then we obtain
\begin{eqnarray*}
    \| F(x) - F(y) \|^2 & = & \sum_{i = 1}^N \| \nabla_i f_i(x) - \nabla_i f_i(y) \|^2 \\
    & \leq & \sum_{i = 1}^N \left( L_0 + L_1 \left\| \nabla_i f_i(x) \right\| \right)^2 \|x - y\|^2 \\
    & \leq & \|x - y\|^2 \sum_{i = 1}^N (2L_0^2 + 2 L_1^2 \| \nabla_i f_i (x)\|^2) \\
    & = & \|x - y\|^2 (2N L_0^2 + 2 L_1^2 \| F(x)\|^2) \\
    & \leq & \|x - y\|^2 ( \sqrt{2N}L_0 + \sqrt{2} L_1 \| F(x)\|)^2.
\end{eqnarray*}
This completes the proof.

\newpage
\section{Convergence Analysis}\label{appendix:convergence_analysis}
In this section, we present the missing proofs from Section \ref{sec:convergence_analysis}. We start with the proof of Proposition \ref{prop:equiv_formulation} and then provide the results related to strongly monotone, monotone and weak Minty problems. 

\subsection{Proof of Proposition \ref{prop:equiv_formulation}}

\begin{proposition}
    Suppose $F$ is $\alpha$-symmetric $(L_0, L_1)$-Lipschitz operator. Then, for $\alpha = 1$
    \begin{equation*}
    \textstyle
        \| F(x) - F(y) \| \leq (L_0 + L_1 \| F(x)\|) \exp{(L_1 \|x - y\|)} \| x- y \|,
    \end{equation*}
    and for $\alpha \in (0, 1)$ we have
    \begin{equation*}
    \textstyle
        \| F(x) - F(y) \| \leq \left(K_0 + K_1 \|F(x)\|^{\alpha} + K_2 \|x - y\|^{\nicefrac{\alpha}{1 - \alpha}} \right) \|x - y\|
    \end{equation*}
    where $K_0 = L_0 (2^{\nicefrac{\alpha^2}{1 - \alpha}} + 1)$, $K_1 = L_1 \cdot 2^{\nicefrac{\alpha^2}{1 - \alpha}}$ and $K_2 = L_1^{\nicefrac{1}{1 - \alpha}} \cdot 2^{\nicefrac{\alpha^2}{1 - \alpha}} \cdot 3^{\alpha} (1 - \alpha)^{\nicefrac{\alpha}{1 - \alpha}}$.
\end{proposition}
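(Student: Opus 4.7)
The idea is to apply the $\alpha$-symmetric $(L_0,L_1)$-Lipschitz condition \emph{locally} along the segment joining $x$ and $y$, derive a self-bounding (Gronwall-type) inequality on the maximum of $\|F\|$ over that segment, and then substitute back into the integral form from Lemma~\ref{lemma:reform_integration}, namely $\|F(x)-F(y)\| \le \bigl(L_0 + L_1\int_0^1\|F(z(t))\|^\alpha\,dt\bigr)\|x-y\|$, where $z(t) = x + t(y-x)$. Writing $\psi(t)\eqdef\|F(z(t))\|$ and $M(t)\eqdef\sup_{\tau\in[0,t]}\psi(\tau)$, applying Assumption~\ref{asdioa} between $x=z(0)$ and $z(\tau)$ and then taking the supremum gives the key functional inequality
\[
    M(t)\;\le\;\psi(0)+\bigl(L_0+L_1M(t)^\alpha\bigr)\,t\,\|y-x\|.
\]

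\textbf{Case $\alpha=1$.} Here the self-bounding inequality is linear in $M$, and for $t\|y-x\|<1/L_1$ it rearranges to $M(t)\le(\psi(0)+L_0t\|y-x\|)/(1-L_1t\|y-x\|)$. Subdividing $[0,1]$ into short sub-intervals (so each satisfies $L_1\cdot\text{length}<1$), iterating the bound, and passing to the continuous limit yields the differential inequality $\psi'(t)\le(L_0+L_1\psi(t))\|y-x\|$, from which Gronwall gives $L_0+L_1\psi(t)\le(L_0+L_1\|F(x)\|)e^{L_1 t\|y-x\|}$. Plugging this into the integral form from Lemma~\ref{lemma:reform_integration} and using the elementary bound $e^{u}-1\le u\,e^{u}$ produces exactly $\|F(x)-F(y)\|\le(L_0+L_1\|F(x)\|)e^{L_1\|x-y\|}\|x-y\|$, as claimed.

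\textbf{Case $\alpha\in(0,1)$.} Here the inequality $M\le\psi(0)+(L_0+L_1M^{\alpha})\|y-x\|$ is nonlinear and is resolved by a dichotomy. Either the nonlinear term is dominated, $L_1M^\alpha\|y-x\|\le M/2$, giving the linear bound $M\le 2(\|F(x)\|+L_0\|y-x\|)$; or it dominates, giving $M^{1-\alpha}<2L_1\|y-x\|$, i.e., $M<(2L_1\|y-x\|)^{1/(1-\alpha)}$. In either case $M\lesssim\|F(x)\|+L_0\|y-x\|+L_1^{1/(1-\alpha)}\|y-x\|^{1/(1-\alpha)}$. Using subadditivity $(a+b+c)^\alpha\le a^\alpha+b^\alpha+c^\alpha$ on $M^\alpha$ and substituting into the integral form yields the three terms $K_0$, $K_1\|F(x)\|^\alpha$, $K_2\|x-y\|^{\alpha/(1-\alpha)}$ (each multiplied by $\|x-y\|$). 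A term $L_0^\alpha L_1\|x-y\|^{1+\alpha}$ appears as a cross product and is absorbed into $K_0\|x-y\|$ using Young's inequality together with $\|x-y\|^{\alpha}\le\|x-y\|^{\alpha/(1-\alpha)}+1$-type estimates; this is what produces the $2^{\alpha^2/(1-\alpha)}$ factor inside $K_0,K_1$ and the $3^\alpha(1-\alpha)^{\alpha/(1-\alpha)}$ factor in $K_2$.

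\textbf{Main obstacle.} The conceptual steps (local-to-global via Gronwall; dichotomy plus subadditivity) are routine; the real difficulty is the \emph{constant bookkeeping}, particularly recovering the exact exponents $2^{\alpha^2/(1-\alpha)}$ and $3^\alpha(1-\alpha)^{\alpha/(1-\alpha)}$ in $K_0,K_1,K_2$. Those exponents suggest that the naive dichotomy threshold $1/2$ is replaced by an optimized split parameter $c(\alpha)\in(0,1)$ (most likely $c=\alpha$ or $c=(1-\alpha)$), and that Young's inequality is applied with conjugate exponents tuned to $\alpha$. Carrying out this optimization, while bookkeeping how each factor of $2$ propagates through subadditivity and through the absorption of the $L_0^\alpha L_1\|x-y\|^{1+\alpha}$ cross term, is the delicate step; the rest of the argument is a direct translation of the Chen--Mei--Yu analysis of $(L_0,L_1)$-smoothness from $F=\nabla f$ to general operators, enabled by Lemma~\ref{lemma:reform_integration}.
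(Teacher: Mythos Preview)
Your proposal is essentially correct and follows the same conceptual route as the paper (Gronwall for $\alpha=1$; dichotomy/Young for $\alpha\in(0,1)$, with both ultimately deferring to \cite{chen2023generalized}). The one noteworthy difference is in the execution of the $\alpha=1$ case: you work with the supremum $M(t)=\sup_{\tau\le t}\psi(\tau)$, obtain the implicit inequality $M(t)\le\psi(0)+(L_0+L_1M(t))t\|y-x\|$, and then propose to subdivide and pass to a limit to recover a differential inequality for $\psi$. The paper instead works directly with the \emph{integral} $H(\theta)\eqdef L_0\theta+L_1\int_0^\theta\|F(x_u)\|\,du$, which is automatically differentiable with $H'(\theta)=L_0+L_1\|F(x_\theta)\|$; combining this with $\|F(x_\theta)-F(y)\|\le H(\theta)\|x-y\|$ (which is exactly the integral form from Lemma~\ref{lemma:reform_integration} applied to the subsegment $[y,x_\theta]$) gives the clean linear ODE inequality $H'(\theta)\le aH(\theta)+b$ with $a=L_1\|x-y\|$, $b=L_0+L_1\|F(y)\|$, and Gronwall finishes. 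This avoids your subdivision step and the delicate issue that $\psi$ need not be differentiable. For $\alpha\in(0,1)$ the paper gives no details beyond the citation, so your dichotomy sketch is as explicit as anything in the paper; your honest flagging of the constant bookkeeping as the main obstacle is accurate.
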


\begin{proof}
For proving this theorem, we follow the proof technique similar to \cite{chen2023generalized}. We start with $\alpha = 1$ case. Let $x, y \in \mathbb{R}^d$ and define $x_\theta := \theta x + (1 - \theta) y$. Since $F$ is $1$-symmetric $(L_0, L_1)$-Lipschitz , we have for all $\theta \in [0, 1]$,
$$
\| F(x_\theta) - F(y) \| \overset{\eqref{eq:reform_integration}}{\leq} \left(L_0 + L_1 \int_0^1 \| F(x_{\theta \tau}) \| d\tau \right) \| x_\theta - y \|.
$$
Note that
$$
x_{\theta \tau} = \tau x_\theta + (1 - \tau) y = \tau (\theta x + (1 - \theta) y) + (1 - \tau) y = \theta \tau x + (1 - \theta \tau) y.
$$
Let us define a function
$$
H(\theta) := L_0 \theta + L_1 \int_0^\theta \| F(x_u) \| du.
$$

Then, note that $H'(\theta) = L_0 + L_1 \| F(x_\theta) \|$. Moreover, we have
\begin{eqnarray*}
    \| F(x_\theta) - F(y) \| & \leq & \left(L_0 + L_1 \int_0^1 \| F(x_{\theta \tau}) \| d\tau \right) \| x_\theta - y \| \\
    & = & \left(L_0 + L_1 \int_0^1 \| F(x_{\theta \tau}) \| d\tau \right) \| \theta x + (1 - \theta) y - y \| \\
    & = & \left(L_0 + L_1 \int_0^1 \| F(x_{\theta \tau}) \| d\tau \right) \| \theta x - \theta y \| \\
    & = & \left(L_0 \theta + L_1 \int_0^1 \| F(x_{\theta \tau}) \| \theta d\tau \right) \| x -  y \| \\
    & = & \left(L_0 \theta + L_1 \int_0^1 \| F(\theta \tau x + (1 - \theta \tau) y) \| \theta d\tau \right) \| x -  y \| \\
    & = & \left(L_0 \theta + L_1 \int_0^{\theta} \| F(u x + (1 - u) y) \| du \right) \| x -  y \| \\
    & = & \left(L_0 \theta + L_1 \int_0^{\theta} \| F(x_u) \| du \right) \| x -  y \| \\
    & = & H(\theta) \| x -  y \|.
\end{eqnarray*}
Therefore we obtain
\begin{eqnarray*}
    H'(\theta) & = & L_0 + L_1 \| F(x_\theta) \|\\ 
    & \leq & L_0 + L_1 \left(\|F(x_\theta) - F(y)\| + \|F(y)\|\right)\\
    & \le & L_0 + L_1 \left(H(\theta) \|x - y\| + \|F(y)\|\right) \\
    & = & a H(\theta) + b,
\end{eqnarray*}
where  $a = L_1 \|x - y\|, b = L_0 + L_1 \|F(y)\|$.
Then we integrate both sides for $\theta \in [0, \theta']$ to get
$$
H(\theta') \le \frac{b}{a} (e^{a \theta'} - 1) .
$$
Here, we set $\theta' = 1$ to obtain
\begin{eqnarray*}
    H(1) &\le & \frac{b}{a} (e^{a} - 1) \\
    & = & \frac{L_0 + L_1 \|F(y)\|}{L_1 \|x - y\|} (e^{L_1 \|x - y\|} - 1).    
\end{eqnarray*}
Now, put this back into the original inequality
\begin{eqnarray*}
    \|F(x) - F(y)\| & \le & H(1) \|x - y\| \\
    & \le &  \left( L_0 + L_1 \|F(y)\| \right) \cdot \frac{e^{L_1 \|x - y\|} - 1}{L_1} \\
    & = & \left( \frac{L_0}{L_1} + \|F(y)\| \right) (e^{L_1 \|x - y\|} - 1).    
\end{eqnarray*}

Finally, using the inequality $e^z - 1 \le z e^z$ for $z \ge 0$, we get
\begin{eqnarray*}
    \|F(x) - F(y)\| & \le & \left( \frac{L_0}{L_1} + \|F(y)\| \right) L_1 \|x - y\| e^{L_1 \|x - y\|} \\
    & \leq & (L_0 + L_1 \|F(y)\|) e^{L_1 \|x - y\|} \|x - y\|
\end{eqnarray*}
This completes the proof for $\alpha = 1$. The proof for $\alpha \in (0, 1)$ follows similarly from \cite{chen2023generalized}. 
\end{proof}

% \newpage
\subsection{Convergence Guarantees for Strongly Monotone Operators}
\begin{lemma}
    Suppose $F$ is $\mu$-strongly monotone operator. Then Extragradient method with step size $\gamma_k = \omega_k$ satisfy
    \begin{equation}\label{eq:strong_monotone_EG}
        \|x_{k+1} - x_*\|^2 \leq (1 - \gamma_k \mu) \left\| x_k - x_* \right\|^2 -\gamma_k^2(1 - 2 \gamma_k \mu) \|F(x_k)\|^2 + \gamma_k^2 \|F(x_k) - F(\hx_k)\|^2.
    \end{equation}
\end{lemma}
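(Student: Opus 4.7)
\textbf{Proof plan for the descent inequality \eqref{eq:strong_monotone_EG}.}

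The plan is to follow the classical ``cross–term'' manipulation for \algname{EG}, expanding $\|x_{k+1}-x_*\|^2$ around $\hx_k-x_*$ rather than around $x_k-x_*$, which is the usual way to surface a $\mu$-strong-monotonicity term at $\hx_k$. First I would write the update as $x_{k+1} = x_k - \gamma_k F(\hx_k)$ (using $\omega_k=\gamma_k$) and expand
\[
\|x_{k+1}-x_*\|^2 = \|x_k-x_*\|^2 - 2\gamma_k\langle F(\hx_k), x_k-x_*\rangle + \gamma_k^2\|F(\hx_k)\|^2.
\]
Then I decompose $x_k-x_* = (\hx_k-x_*) + \gamma_k F(x_k)$ so that the inner product splits into $\langle F(\hx_k),\hx_k-x_*\rangle + \gamma_k\langle F(\hx_k),F(x_k)\rangle$. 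Strong monotonicity \eqref{eq:strong_monotone} combined with $F(x_*)=0$ gives $\langle F(\hx_k),\hx_k-x_*\rangle\geq \mu\|\hx_k-x_*\|^2$.

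Next I would apply the polarization identity \eqref{eq:inner_prod} to the remaining cross term: $-2\langle F(\hx_k),F(x_k)\rangle = \|F(x_k)-F(\hx_k)\|^2 - \|F(x_k)\|^2 - \|F(\hx_k)\|^2$. The $-\gamma_k^2\|F(\hx_k)\|^2$ produced here cancels the $+\gamma_k^2\|F(\hx_k)\|^2$ from the expansion, leaving
\[
\|x_{k+1}-x_*\|^2 \leq \|x_k-x_*\|^2 - 2\gamma_k\mu\|\hx_k-x_*\|^2 - \gamma_k^2\|F(x_k)\|^2 + \gamma_k^2\|F(x_k)-F(\hx_k)\|^2.
\]
This is already close to \eqref{eq:strong_monotone_EG}; the only remaining task is to convert the $\|\hx_k-x_*\|^2$ term into a $\|x_k-x_*\|^2$ term.

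For this final step I would invoke inequality \eqref{eq:inequality1} with $a=\hx_k-x_*$ and $b=x_k-\hx_k=\gamma_k F(x_k)$ so that $a+b = x_k-x_*$, yielding $-\|\hx_k-x_*\|^2 \leq -\tfrac{1}{2}\|x_k-x_*\|^2 + \gamma_k^2\|F(x_k)\|^2$. Multiplying by $2\gamma_k\mu$ and substituting produces $-\gamma_k\mu\|x_k-x_*\|^2 + 2\gamma_k^3\mu\|F(x_k)\|^2$, which combines with the $-\gamma_k^2\|F(x_k)\|^2$ term to give exactly $-\gamma_k^2(1-2\gamma_k\mu)\|F(x_k)\|^2$, matching the claim.

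I do not anticipate a real obstacle: the proof is a sequence of algebraic identities and two applications of strong monotonicity/the quadratic inequality, all of which are already recorded as Lemmas in the paper. The only slightly non-obvious choice is using \eqref{eq:inequality1} rather than a direct strong-monotonicity estimate on $\|\hx_k-x_*\|^2$; the direct estimate would give an upper rather than a lower bound on $\|\hx_k-x_*\|^2$ and hence would not upper-bound the negative term $-2\gamma_k\mu\|\hx_k-x_*\|^2$.
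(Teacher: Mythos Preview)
Your proposal is correct and follows essentially the same route as the paper's proof: expand $\|x_{k+1}-x_*\|^2$, split $x_k-x_*=(\hx_k-x_*)+(x_k-\hx_k)$, apply strong monotonicity at $\hx_k$, use the polarization identity \eqref{eq:inner_prod} on $\langle F(\hx_k),F(x_k)\rangle$, and then invoke \eqref{eq:inequality1} with $a=\hx_k-x_*$, $b=\gamma_kF(x_k)$. The only difference is the order in which \eqref{eq:inner_prod} and \eqref{eq:inequality1} are applied, which is immaterial.
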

\begin{proof}
    From the update step of the Extragradient method, we obtain
    \begin{eqnarray*}
        \|x_{k+1} - x_*\|^2 & = & \| x_k - \gamma_k F(\hx_k) - x_* \|^2 \notag\\
        & = & \| x_k - x_* \|^2 - 2 \gamma_k \la F(\hx_k), x_k - x_* \ra + \gamma_k^2  \| F(\hx_k) \|^2 \notag\\
        & = & \| x_k - x_* \|^2 - 2 \gamma_k \la F(\hx_k), \hx_k - x_* \ra - 2 \gamma_k \la F(\hx_k), x_k - \hx_k \ra + \gamma_k^2  \| F(\hx_k) \|^2 \notag\\
        & \overset{\eqref{eq:strong_monotone}}{\leq} & \| x_k - x_* \|^2 - 2 \gamma_k \mu \| \hx_k - x_*\|^2 - 2 \gamma_k \la F(\hx_k), x_k - \hx_k \ra  + \gamma_k^2  \| F(\hx_k) \|^2 \notag\\
        & \overset{\eqref{eq:inequality1}}{\leq} & \| x_k - x_* \|^2 - \gamma_k \mu \| x_k - x_*\|^2 + 2 \gamma_k \mu \|x_k - \hx_k\|^2 - 2 \gamma_k \la F(\hx_k), x_k - \hx_k \ra \notag \\ 
        && + \gamma_k^2  \| F(\hx_k) \|^2 \notag\\
        & = & (1 - \gamma_k \mu) \| x_k - x_* \|^2 + 2 \gamma_k \mu \|x_k - \hx_k\|^2 - 2 \gamma_k \la F(\hx_k), x_k - \hx_k \ra + \gamma_k^2  \| F(\hx_k) \|^2 \notag\\
        & = & (1 - \gamma_k \mu) \| x_k - x_* \|^2 + 2 \gamma_k^3 \mu \|F(x_k)\|^2 - 2 \gamma_k^2 \la F(\hx_k), F(x_k) \ra + \gamma_k^2  \| F(\hx_k) \|^2 \notag\\
        & \overset{\eqref{eq:inner_prod}}{=} & (1 - \gamma_k \mu) \left\| x_k - x_* \right\|^2 + 2 \gamma_k^3 \mu \|F(x_k)\|^2 \notag \\
        && - \gamma_k^2 \left(\|F(\hx_k)\|^2 + \|F(x_k)\|^2 - \|F(x_k) - F(\hx_k)\|^2 \right) + \gamma_k^2 \|F(\hx_k)\|^2 \notag\\
        & = & (1 - \gamma_k \mu) \left\| x_k - x_* \right\|^2 -\gamma_k^2(1 - 2 \gamma_k \mu) \|F(x_k)\|^2 + \gamma_k^2 \|F(x_k) - F(\hx_k)\|^2.
    \end{eqnarray*}
\end{proof}

\subsubsection{Proof of Theorem \ref{theorem:1symm_strongmonotone}}
\begin{theorem}
    Suppose $F$ is $\mu$-strongly monotone and $1$-symmetric $(L_0, L_1)$-Lipschitz operator. Then Extragradient method with step size $\gamma_k = \omega_k =  \frac{\nu}{L_0 + L_1 \|F(x_k)\|}$ satisfy
    \begin{eqnarray*}
        \|x_{k+1} - x_*\|^2 & \leq & \left( 1 - \frac{\nu \mu}{L_0 \left( 1 + L_1 \exp{\left( L_1 \|x_0 - x_*\|\right)} \|x_0 - x_* \|\right)} \right)^{k+1} \|x_0 - x_*\|^2
    \end{eqnarray*}
    where $\nu > 0$ root of $ 1 - 2 \nu - \nu^2 \exp{2\nu} = 0$.
\end{theorem}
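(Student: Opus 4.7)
\textbf{Proof plan for Theorem \ref{theorem:1symm_strongmonotone}.}
The starting point is the one-step recursion~\eqref{eq:strong_monotone_EG}, which I would invoke without reproving it:
\[
\|x_{k+1} - x_*\|^2 \leq (1 - \gamma_k \mu) \|x_k - x_*\|^2 - \gamma_k^2(1 - 2 \gamma_k \mu)\|F(x_k)\|^2 + \gamma_k^2 \|F(x_k) - F(\hx_k)\|^2.
\]
The task is to control the cross term $\|F(x_k) - F(\hx_k)\|^2$ and then extract geometric contraction from a per-iteration lower bound on $\gamma_k$.

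\textbf{Step 1: bound the cross term.} Since $\hx_k = x_k - \gamma_k F(x_k)$, the choice $\gamma_k = \nu/(L_0 + L_1\|F(x_k)\|)$ gives
\[
L_1 \|\hx_k - x_k\| \;=\; L_1 \gamma_k \|F(x_k)\| \;\leq\; \gamma_k(L_0 + L_1\|F(x_k)\|) \;=\; \nu.
\]
Applying Proposition~\ref{prop:equiv_formulation} with $\alpha = 1$ at the pair $(x_k, \hx_k)$ and using the above yields
\[
\|F(x_k) - F(\hx_k)\| \;\leq\; (L_0 + L_1\|F(x_k)\|) \exp(L_1\|\hx_k - x_k\|)\, \|\hx_k - x_k\| \;\leq\; \nu \exp(\nu)\, \|F(x_k)\|.
\]
Plugging this in collapses the recursion to
\[
\|x_{k+1} - x_*\|^2 \leq (1 - \gamma_k \mu) \|x_k - x_*\|^2 - \gamma_k^2 \bigl(1 - 2\gamma_k \mu - \nu^2 e^{2\nu}\bigr) \|F(x_k)\|^2.
\]

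\textbf{Step 2: positivity of the bracket.} I want the bracket $1 - 2\gamma_k\mu - \nu^2 e^{2\nu}$ non-negative so the operator-norm term can be discarded. Since $\gamma_k \leq \nu/L_0$ and (without loss of generality) $\mu \leq L_0$, one has $\gamma_k \mu \leq \nu$, hence the bracket is at least $1 - 2\nu - \nu^2 e^{2\nu}$, which vanishes by the defining equation for $\nu$. This leaves the clean recursion $\|x_{k+1} - x_*\|^2 \leq (1 - \gamma_k\mu)\|x_k - x_*\|^2$.

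\textbf{Step 3: induction and a uniform lower bound on $\gamma_k$.} The recursion implies $\|x_{k+1} - x_*\| \leq \|x_k - x_*\|$, so by induction $\|x_k - x_*\| \leq \|x_0 - x_*\|$ for all $k$. Applying Proposition~\ref{prop:equiv_formulation} at $(x_k, x_*)$ (using $F(x_*) = 0$) then gives
\[
\|F(x_k)\| \leq L_0 \exp(L_1 \|x_k - x_*\|) \|x_k - x_*\| \leq L_0 \exp(L_1 \|x_0 - x_*\|) \|x_0 - x_*\|,
\]
which yields the uniform lower bound
\[
\gamma_k \geq \frac{\nu}{L_0\bigl(1 + L_1 \exp(L_1 \|x_0 - x_*\|) \|x_0 - x_*\|\bigr)}.
\]
Unrolling $\|x_{k+1} - x_*\|^2 \leq (1 - \gamma_k \mu)\|x_k - x_*\|^2$ with this lower bound produces the claimed geometric rate.

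\textbf{Main obstacle.} The subtlety is circular: the lower bound on $\gamma_k$ needs an a priori bound on $\|F(x_k)\|$, which in turn needs $\|x_k - x_*\| \leq \|x_0 - x_*\|$, which itself follows only from the contraction that $\gamma_k > 0$ is supposed to establish. I resolve this by first using only the weaker fact $\gamma_k > 0$ (together with Step 2) to get pure monotone decrease of $\|x_k - x_*\|$, and only afterwards invoking Proposition~\ref{prop:equiv_formulation} to upgrade to a uniform lower bound on $\gamma_k$ and a quantitative geometric rate. A minor point to check carefully is the positive root of $1 - 2\nu - \nu^2 e^{2\nu} = 0$ (approximately $0.363$), which must satisfy $\nu < 1/2$ for the scheme to make sense.
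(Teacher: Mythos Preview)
Your proposal is correct and follows essentially the same argument as the paper: invoke the one-step recursion~\eqref{eq:strong_monotone_EG}, bound $\|F(x_k)-F(\hx_k)\|$ via Proposition~\ref{prop:equiv_formulation} and the identity $\|x_k-\hx_k\|=\gamma_k\|F(x_k)\|$, use the defining equation for $\nu$ to drop the $\|F(x_k)\|^2$ term, and then first extract monotone decrease of $\|x_k-x_*\|$ before upgrading to a uniform lower bound on $\gamma_k$. One small remark: your claim $\mu\le L_0$ is not ``without loss of generality'' but a genuine consequence of strong monotonicity together with the $(L_0,L_1)$-Lipschitz property near $x_*$ (the paper instead uses the slightly weaker $\mu\le L_0+L_1\|F(x_k)\|$, which is immediate); either way the bracket in Step~2 is non-negative.
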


\begin{proof}
    From the update rule of Extragradient and using $\mu$-strong monotonicty, we obtain
    \begin{eqnarray}\label{eq:1symm_strongmonotone_eq}
        \|x_{k+1} - x_*\|^2 
        & \overset{\eqref{eq:strong_monotone_EG}}{\leq} & (1 - \gamma_k \mu) \left\| x_k - x_* \right\|^2 -\gamma_k^2(1 - 2 \gamma_k \mu) \|F(x_k)\|^2 + \gamma_k^2 \|F(x_k) - F(\hx_k)\|^2 \notag\\
        & \overset{\eqref{eq:alpha=1}}{\leq} & (1 - \gamma_k \mu) \left\| x_k - x_* \right\|^2 -\gamma_k^2(1 - 2 \gamma_k \mu) \|F(x_k)\|^2 \notag \\
        && + \gamma_k^2 (L_0 + L_1 \|F(x_k)\|)^2 \exp{(2L_1 \|x_k - \hx_k\|)} \|x_k -\hx_k\|^2 \notag\\
        & = & (1 - \gamma_k \mu) \|x_k - x_*\|^2 \notag \\
        && - \gamma_k^2 \left(1 - 2 \gamma_k \mu - \gamma_k^2 (L_0 + L_1 \|F(x_k)\|)^2 \exp{(2 \gamma_k L_1 \|F(x_k)\|)} \right) \|F(x_k)\|^2.
    \end{eqnarray}
    Now we set $\gamma_k = \frac{\nu}{L_0 + L_1 \|F(x_k)\|}$. Then we want to choose $\nu$ such that 
    \begin{eqnarray*}
        1 - \frac{2 \nu \mu}{L_0 + L_1 \|F(x_k)\|} - \nu^2 \exp{2 \nu} \geq 0.
    \end{eqnarray*}
    Note that $\mu \leq L_0 + L_1 \| F(x_k) \|$ for any $x_k$. Therefore, we get $1 - \frac{2 \nu \mu}{L_0 + L_1 \|F(x_k)\|} - \nu^2 \exp{2 \nu} \geq 1 - 2 \nu - \nu^2 \exp{2 \nu}$ and it is enough to choose $\nu$ such that 
    \begin{eqnarray*}
        1 - 2 \nu - \nu^2 \exp{2\nu} \geq 0.
    \end{eqnarray*}
    This inequality holds for any $\nu \leq 0.22$. Hence, for this choice of $\nu$ we get the following inequality from \eqref{eq:1symm_strongmonotone_eq}.
    \begin{eqnarray}\label{eq:1symm_strongmonotone_eq1}
        \|x_{k+1} - x_*\|^2 & \leq & (1 - \gamma_k \mu) \|x_k - x_*\|^2.
    \end{eqnarray}
    This proves that the distance of the iterates $x_k$ from $x_*$ are bounded by $\|x_0 - x_*\|$.
    Now note that using \eqref{eq:alpha=1} with $x = x_k, y = x_*$ with $\|x_k - x_*\| \leq \|x_0 - x_*\|$ we get
    \begin{eqnarray}\label{eq:1symm_strongmonotone_eq2}
        \|F(x_k)\| & \overset{\eqref{eq:alpha=1}}{\leq} & L_0 \exp{\left(L_1 \|x_k - x_*\| \right)} \|x_k - x_*\| \notag \\
        & \overset{\eqref{eq:1symm_strongmonotone_eq1}}{\leq} & L_0 \exp{\left(L_1 \|x_0 - x_*\| \right)} \|x_0 - x_*\|.
    \end{eqnarray}
    Therefore, we have the following lower bound on the step size 
    \begin{eqnarray}\label{eq:1symm_strongmonotone_eq3}
        \gamma_k & = & \frac{\nu}{L_0 + L_1 \|F(x_k)\|} \notag \\
        & \overset{\eqref{eq:1symm_strongmonotone_eq2}}{\geq} & \frac{\nu}{L_0 \left( 1 + L_1 \exp{\left( L_1 \|x_0 - x_*\|\right)} \|x_0 - x_* \|\right)}.
    \end{eqnarray}
    Hence, we get 
    \begin{eqnarray*}
        \|x_{k+1} - x_*\|^2 & \overset{\eqref{eq:1symm_strongmonotone_eq1},\eqref{eq:1symm_strongmonotone_eq3}}{\leq} & \left( 1 - \frac{\nu \mu}{L_0 \left[ 1 + L_1 \exp{\left( L_1 \|x_0 - x_*\|\right)} \|x_0 - x_* \|\right]} \right) \|x_k - x_*\|^2 \\
        & \leq & \left( 1 - \frac{\nu \mu}{L_0 \left[ 1 + L_1 \exp{\left( L_1 \|x_0 - x_*\|\right)} \|x_0 - x_* \|\right]} \right)^{k+1} \|x_0 - x_*\|^2.
    \end{eqnarray*}
\end{proof}

\subsubsection{Proof of Corollary \ref{corollary:1symm_strongmonotone}}
\begin{corollary}
    Suppose $F$ is $\mu$-strongly monotone and $1$-symmetric $(L_0, L_1)$-Lipschitz operator. Then Extragradient with step size $\gamma_k = \omega_k = \frac{\nu}{L_0 + L_1 \|F(x_k)\|}$ satisfy $\|x_{K+1} - x_*\|^2  \leq  \varepsilon$ after
    \begin{equation*}
        K = \frac{2 L_0}{\nu \mu} \log \left( \frac{\|x_0 - x_*\|^2}{\varepsilon}\right) + \frac{1}{\gamma \mu} \log \left( \frac{2L_1 \|x_0 - x_*\|^2}{\gamma^2 L_0} \right)
    \end{equation*}
    many iterations, where $\nu > 0$ satisfy $1 - 4 \nu - 2
    \nu^2 \exp{2\nu} = 0$ and 
    \begin{equation*}
        \gamma \eqdef \frac{\nu}{L_0 \left( 1 + L_1 \exp{\left( L_1 \|x_0 - x_*\|\right)} \|x_0 - x_* \|\right)}.
    \end{equation*}
\end{corollary}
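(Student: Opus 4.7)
The plan is a two-phase analysis that exploits the observation that the pessimistic step-size lower bound $\gamma$ used in Theorem~\ref{theorem:1symm_strongmonotone} becomes progressively looser as the iterates approach $x_*$. Once $\|F(x_k)\|$ drops below $L_0/L_1$, the denominator of $\gamma_k = \nu/(L_0 + L_1\|F(x_k)\|)$ is at most $2L_0$, so $\gamma_k \geq \nu/(2L_0)$, a lower bound that no longer depends on $\|x_0-x_*\|$ or on the exponential factor $\exp(L_1\|x_0-x_*\|)$. The total iteration count therefore splits naturally into a warm-up phase of length $K'$ and a fast phase of length $K-K'$, which will ultimately correspond to Term~II and Term~I of \eqref{corollary:1symm_strongmonotone_eq1}.

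For the warm-up phase I would first invoke Theorem~\ref{theorem:1symm_strongmonotone} to obtain the crude geometric decay $\|x_k - x_*\|^2 \leq (1-\gamma\mu)^k \|x_0 - x_*\|^2$, and then combine this with the inequality $\|F(x_k)\| = \|F(x_k)-F(x_*)\| \leq L_0 \exp(L_1\|x_k-x_*\|)\|x_k-x_*\|$, which follows from Proposition~\ref{prop:equiv_formulation} applied at $y=x_*$ together with $F(x_*)=0$. Using the monotone contraction $\|x_k-x_*\| \leq \|x_0-x_*\|$ to control the exponential, solving the condition $\|F(x_{K'})\| \leq L_0/L_1$ gives an explicit threshold of order $\tfrac{1}{\gamma\mu}\log\tfrac{L_1\|x_0-x_*\|^2}{\gamma^2 L_0}$, which matches Term~II once the constants ($\gamma$-defined exponential inside the log) are tracked carefully.

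Next, for $k \geq K'$, I would verify by induction that the property $\gamma_k \geq \nu/(2L_0)$ persists: the one-step descent continues to strictly contract $\|x_k - x_*\|$, which via the same Lipschitz-type bound keeps $\|F(x_k)\|$ below $L_0/L_1$ at the next iterate. Re-running the proof of Theorem~\ref{theorem:1symm_strongmonotone}, but now substituting the sharper lower bound $\gamma_k \geq \nu/(2L_0)$ into the contraction factor rather than $\gamma_k \geq \gamma$, yields $\|x_{k+1}-x_*\|^2 \leq (1 - \nu\mu/(2L_0))\|x_k-x_*\|^2$. Iterating this from $k=K'$ (where $\|x_{K'}-x_*\|^2 \leq \|x_0-x_*\|^2$) up to the target accuracy produces the additional $\tfrac{2L_0}{\nu\mu}\log\tfrac{\|x_0-x_*\|^2}{\varepsilon}$ iterations that form Term~I.

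The main obstacle is the delicate bookkeeping around $\nu$ and the transition between phases. The more restrictive equation $1 - 4\nu - 2\nu^2\exp(2\nu) = 0$, versus the $1 - 2\nu - \nu^2\exp(2\nu)=0$ of Theorem~\ref{theorem:1symm_strongmonotone}, is needed because in the fast phase the step size can grow as large as $\nu/L_0$, and a strict positive margin in $1 - 2\gamma_k\mu - \nu^2\exp(2\nu) > 0$ must be preserved so that the inductive claim $\|F(x_k)\| \leq L_0/L_1$ continues to hold; doubling the admissible constants provides exactly this slack without affecting the asymptotic rate. A secondary technical issue is ensuring the factor $2$ appearing inside the logarithm of Term~II, which emerges from replacing $\exp(L_1\|x_k-x_*\|)$ by the uniform bound $1 + L_1\exp(L_1\|x_0-x_*\|)\|x_0-x_*\|$ that defines $\gamma$, rather than by a simple $\exp(L_1\|x_0-x_*\|)$.
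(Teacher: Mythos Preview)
Your two-phase split is exactly the paper's structure, but the mechanism you propose for the warm-up phase and your reading of the $\nu$ condition both diverge from the actual proof.

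In the paper, the point of tightening $\nu$ from the root of $1-2\nu-\nu^2e^{2\nu}=0$ to the root of $1-4\nu-2\nu^2e^{2\nu}=0$ is \emph{not} to create slack in the fast phase. The new $\nu$ makes $1-2\nu-\nu^2e^{2\nu}=\tfrac12$, which upgrades the one-step inequality of Theorem~\ref{theorem:1symm_strongmonotone} to
\[
\|x_{k+1}-x_*\|^2 \;\le\; (1-\gamma_k\mu)\,\|x_k-x_*\|^2 \;-\; \tfrac{\gamma_k^2}{2}\,\|F(x_k)\|^2 .
\]
The paper then reads off $\tfrac{\gamma^2}{2}\|F(x_k)\|^2\le(1-\gamma\mu)^{k+1}\|x_0-x_*\|^2$ directly from this refined descent, and it is this estimate that yields exactly Term~II, i.e.\ $K'=\tfrac{1}{\gamma\mu}\log\bigl(\tfrac{2L_1\|x_0-x_*\|^2}{\gamma^2 L_0}\bigr)$. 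The factor $2$ and the $\gamma^2$ inside the logarithm come from the coefficient $\gamma^2/2$, not from any handling of $\exp(L_1\|x_k-x_*\|)$. No inductive persistence argument is needed either: the bound on $\|F(x_k)\|$ is monotone in $k$, so it holds for all $k\ge K'$ automatically.

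Your alternative---bounding $\|F(x_k)\|$ via Proposition~\ref{prop:equiv_formulation} applied at $y=x_*$ and then pushing the contraction of $\|x_k-x_*\|$ through---is a legitimate route and does give a warm-up length of the same order, but it produces $K'\approx\tfrac{2}{\gamma\mu}\log\bigl(L_1 e^{L_1\|x_0-x_*\|}\|x_0-x_*\|\bigr)$, which does not match the stated Term~II. Moreover, under your argument the stronger $\nu$ equation is never actually used: both the warm-up decay and the fast-phase contraction already hold with the $\nu$ of Theorem~\ref{theorem:1symm_strongmonotone}. So your diagnosis of where the tighter $\nu$ enters is the piece to revisit.
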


\begin{proof}
    We set $\gamma_k = \frac{\nu}{L_0 + L_1 \|F(x_k)\|}$ and we choose $\nu \in (0, 1)$ such that $1 - 4 \nu - 2
    \nu^2 \exp{2\nu} = 0$. Then we have
    \begin{eqnarray}\label{eq:1symm_strongmonotone_cor_eq1}
        1 - 2 \gamma_k \mu - \gamma_k^2 (L_0 + L_1 \|F(x_k)\|)^2 \exp{(2 \gamma_k L_1 \|F(x_k)\|)} & \geq & 1 - \frac{2 \nu \mu}{L_0 + L_1 \|F(x_k)\|} - \nu^2 \exp{2 \nu} \notag \\
        & \geq & 1 - 2 \nu - \nu^2 \exp{2 \nu} \notag \\
        & = & \frac{1}{2}. 
    \end{eqnarray}
    Therefore, from \eqref{eq:1symm_strongmonotone_eq} and \eqref{eq:1symm_strongmonotone_cor_eq1} we get 
    \begin{eqnarray}\label{eq:1symm_strongmonoton_noexp_eq1}
        \|x_{k+1} - x_*\|^2 & \leq & (1 - \gamma_k \mu) \|x_k - x_*\|^2 - \frac{\gamma_k^2}{2} \|F(x_k)\|^2.
    \end{eqnarray}
    In \eqref{eq:1symm_strongmonotone_eq3}, we found that a lower bound of $\gamma_k$ is 
    \begin{eqnarray*}
        \gamma_k \geq \gamma \eqdef \frac{\nu}{L_0 \left( 1 + L_1 \exp{\left( L_1 \|x_0 - x_*\|\right)} \|x_0 - x_* \|\right)}.
    \end{eqnarray*}
    Then from \eqref{eq:1symm_strongmonoton_noexp_eq1} we get
    \begin{equation}\label{eq:1symm_strongmonoton_noexp_eq2}
        \frac{\gamma^2}{2} \|F(x_k)\|^2 \leq (1 - \gamma \mu)\|x_k - x_*\|^2 \leq (1 - \gamma \mu)^{k+1} \|x_0 - x_*\|^2
    \end{equation}
    This can be rearranged to write
    \begin{equation*}
        \|F(x_k)\|^2 \leq \frac{2(1 - \gamma \mu)^{k+1}}{\gamma^2} \|x_0 - x_*\|^2.
    \end{equation*}
    This implies $\|F(x_k)\|^2 \leq \frac{L_0}{L_1}$ after 
    \begin{equation}
        K' = \frac{1}{\gamma \mu} \log \left( \frac{2L_1 \|x_0 - x_*\|^2}{\gamma^2 L_0} \right)
    \end{equation}
    many iterations. Hence for $k \geq K'$ we have
    \begin{eqnarray*}
        \gamma_k & = & \frac{\nu}{L_0 + L_1 \|F(x_k)\|} \\
        & \geq & \frac{\nu}{2 L_0}.
    \end{eqnarray*}
    In the last inequality we used $ \|F(x_k)\|^2 \leq \frac{L_0}{L_1}$ for $k \geq K'$. Therefore for $k \geq K'$ we obtain
    \begin{eqnarray*}
        \|x_{k+1} - x_*\|^2 & \leq & \left(1 - \frac{\nu \mu}{2 L_0} \right) \|x_k - x_*\|^2 \\
        & \leq & \left(1 - \frac{\nu \mu}{2 L_0} \right)^{k+1 - K'} \|x_{K'} - x_*\|^2 \\
        & \leq & \left(1 - \frac{\nu \mu}{2 L_0} \right)^{k+1 - K'} \|x_0 - x_*\|^2
    \end{eqnarray*}
    Thus we conclude that, $\|x_{K+1} - x_*\|^2 \leq \varepsilon$ after atmost 
    \begin{equation*}
        K = \frac{2 L_0}{\nu \mu} \log \left( \frac{\|x_0 - x_*\|^2}{\varepsilon}\right) + K'
    \end{equation*}
    many iterations.
\end{proof}

\subsubsection{Proof of Theorem \ref{theorem:1symm_strongmonotone_alhpha01}}
\begin{theorem}
    Suppose $F$ is $\mu$-strongly monotone and $\alpha$-symmetric $(L_0, L_1)$-Lipschitz operator with $\alpha \in (0, 1)$. Then Extragradient method with $\gamma_k = \omega_k = \frac{\nu}{2K_0 + \left( 2 K_1 + 2^{1 - \alpha} K_2^{1 - \alpha} \right) \| F(x_k)\|^{\alpha}}$ satisfy
    \begin{eqnarray*}
    \textstyle
        \|x_{k+1} - x_*\|^2 \leq \left( 1 - \frac{\nu \mu}{2 K_0 + (2 K_1 + 2^{1 - \alpha} K_2^{1 - \alpha})(K_0 + K_2 \|x_0 - x_*\|^{\nicefrac{\alpha}{1 - \alpha}})^{\alpha} \|x_0 - x_*\|^{\alpha}}\right)^{k+1} \|x_0 - x_*\|^2.
    \end{eqnarray*}
    where $\nu \in (0, 1)$ is a constant such that $1 - \nu - \nu^2 \geq 0$.
\end{theorem}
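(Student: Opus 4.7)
The plan is to mirror the proof of Theorem~\ref{theorem:1symm_strongmonotone} but substitute the $\alpha\in(0,1)$ bound~\eqref{eq:alpha(0,1)} from Proposition~\ref{prop:equiv_formulation} in place of the $\alpha=1$ bound. Starting from the strong-monotonicity identity~\eqref{eq:strong_monotone_EG}, the whole task reduces to choosing $\gamma_k$ so that the coefficient $1 - 2\gamma_k\mu - \gamma_k^2\|F(x_k)-F(\hat{x}_k)\|^2/\|F(x_k)\|^2$ is nonnegative, after which the recursion collapses to $\|x_{k+1}-x_*\|^2 \leq (1-\gamma_k\mu)\|x_k-x_*\|^2$.

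Using~\eqref{eq:alpha(0,1)} together with $\|x_k-\hat{x}_k\| = \gamma_k\|F(x_k)\|$, the quantity $\gamma_k\|F(x_k)-F(\hat{x}_k)\|/\|F(x_k)\|$ is at most $A+B+C$, where $A := \gamma_k K_0$, $B := \gamma_k K_1\|F(x_k)\|^\alpha$, and $C := K_2\gamma_k^{1/(1-\alpha)}\|F(x_k)\|^{\alpha/(1-\alpha)}$. The main obstacle is the third term, which carries $\gamma_k$ to the fractional power $1/(1-\alpha)$ and cannot be absorbed by the same piece of the step size that tames $A$ and $B$. The key trick is to split the denominator of $\gamma_k$ in two: pairing $A+B$ against $2K_0+2K_1\|F(x_k)\|^\alpha$ yields $A+B\leq\nu/2$, while pairing $C$ against $2^{1-\alpha}K_2^{1-\alpha}\|F(x_k)\|^\alpha$ (noting that $(2^{1-\alpha}K_2^{1-\alpha})^{1/(1-\alpha)} = 2K_2$) yields $C\leq\nu^{1/(1-\alpha)}/2\leq\nu/2$, where the final inequality uses $\nu\in(0,1)$. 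Hence $(A+B+C)^2 \leq \nu^2$.

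For the $2\gamma_k\mu$ term I will first observe that $\mu\leq K_0$. This follows by combining the strong-monotonicity consequence $\|F(x)\|\geq\mu\|x-x_*\|$ (via Cauchy-Schwarz) with the bound $\|F(x)\|\leq(K_0+K_2\|x-x_*\|^{\alpha/(1-\alpha)})\|x-x_*\|$, obtained from~\eqref{eq:alpha(0,1)} applied at $(x,y)=(x_*,x)$ and $F(x_*)=0$, and letting $\|x-x_*\|\to 0$. Since the denominator of $\gamma_k$ is at least $2K_0$, we get $2\gamma_k\mu\leq\nu\mu/K_0\leq\nu$. The desired nonnegativity therefore reduces to $1-\nu-\nu^2\geq 0$, which is exactly the hypothesis on $\nu$. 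This delivers the per-step contraction $\|x_{k+1}-x_*\|^2\leq(1-\gamma_k\mu)\|x_k-x_*\|^2$, and in particular $\|x_k-x_*\|$ is monotonically nonincreasing.

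The final step turns the per-step contraction into a uniform linear rate by producing a lower bound on $\gamma_k$ that does not depend on $k$. Applying~\eqref{eq:alpha(0,1)} once more with $(x,y)=(x_*,x_k)$, using $F(x_*)=0$ and $\|x_k-x_*\|\leq\|x_0-x_*\|$, yields $\|F(x_k)\|\leq(K_0+K_2\|x_0-x_*\|^{\alpha/(1-\alpha)})\|x_0-x_*\|$. Substituting the resulting bound on $\|F(x_k)\|^\alpha$ into the denominator of $\gamma_k$ produces exactly the constant appearing in the theorem, and iterating the one-step contraction $k+1$ times yields the stated $(1-\cdot)^{k+1}$ decay.
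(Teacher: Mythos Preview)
Your proposal is correct and mirrors the paper's proof essentially line by line: start from~\eqref{eq:strong_monotone_EG}, plug in~\eqref{eq:alpha(0,1)}, split the step-size denominator so that $A+B\le\nu/2$ and $C\le\nu^{1/(1-\alpha)}/2\le\nu/2$, use $\mu\le K_0$ to reduce the nonnegativity requirement to $1-\nu-\nu^2\ge0$, then lower-bound $\gamma_k$ via~\eqref{eq:alpha(0,1)} at $(x_*,x_k)$ and iterate. One harmless slip in your write-up: the coefficient that must be nonnegative is $1-2\gamma_k\mu-\|F(x_k)-F(\hat x_k)\|^2/\|F(x_k)\|^2$ (no extra $\gamma_k^2$), and accordingly your $A+B+C$ upper-bounds $\|F(x_k)-F(\hat x_k)\|/\|F(x_k)\|$ rather than $\gamma_k$ times it, but with your stated definitions of $A,B,C$ the computation goes through unchanged.
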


\begin{proof} 
    Using the update steps of the Extragradient method and using $\mu$-strong monotonicity, we have
    \begin{eqnarray*}
        \|x_{k+1} - x_*\|^2 
        & \overset{\eqref{eq:strong_monotone_EG}}{\leq} & (1 - \gamma_k \mu) \left\| x_k - x_* \right\|^2 -\gamma_k^2(1 - 2 \gamma_k \mu) \|F(x_k)\|^2 + \gamma_k^2 \|F(x_k) - F(\hx_k)\|^2 \\
        & \overset{\eqref{eq:alpha(0,1)}}{\leq} & (1 - \gamma_k \mu) \left\| x_k - x_* \right\|^2 -\gamma_k^2(1 - 2 \gamma_k \mu) \|F(x_k)\|^2 \\
        && + \gamma_k^2 \left(K_0 + K_1 \|F(x_k)\|^{\alpha} + K_2 \| x_k - \hx_k \|^{\nicefrac{\alpha}{1 - \alpha}} \right)^2 \|x_k - \hx_k\|^2 \\
        & = & (1 - \gamma_k \mu) \|x_k - x_*\|^2 \\
        && + \gamma_k^2 \left(1 - 2 \gamma_k \mu - \gamma_k^2 \left(K_0 + K_1 \|F(x_k)\|^{\alpha} + \gamma_k^{\nicefrac{\alpha}{1 - \alpha}} K_2 \| F(x_k) \|^{\nicefrac{\alpha}{1 - \alpha}} \right)^2 \right) \|F(x_k)\|^2.
    \end{eqnarray*}
    Here we will choose $\gamma_k > 0$ such that
    \begin{equation*}
        1 - 2 \gamma_k \mu  - \gamma_k^2 \left(K_0 + K_1 \|F(x_k)\|^{\alpha} + \gamma_k^{\nicefrac{\alpha}{1 - \alpha}} K_2 \| F(x_k) \|^{\nicefrac{\alpha}{1 - \alpha}} \right)^2 \geq 0
    \end{equation*} 
    Let us choose $\gamma_k = \frac{\nu}{2K_0 + \left( 2 K_1 + 2^{1 - \alpha} K_2^{1 - \alpha} \right) \| F(x_k)\|^{\alpha}}$ for some $\nu \in (0, 1)$. Then we observe that
    \begin{eqnarray*}
        \gamma_k \left( K_0 + K_1 \| F(x_k)\|^{\alpha} \right) + \gamma_k^{\nicefrac{1}{1 - \alpha}} K_2 \| F(x_k)\|^{\nicefrac{\alpha}{1 - \alpha}} & \leq &\frac{\nu \left( K_0 + K_1 \| F(x_k)\|^{\alpha} \right)}{2K_0 + \left( 2 K_1 + 2^{1 - \alpha} K_2^{1 - \alpha} \right) \| F(x_k)\|^{\alpha}} \\
        && + \frac{\nu^{\nicefrac{1}{1 - \alpha}} K_2 \|F(x_k)\|^{\nicefrac{\alpha}{1 - \alpha}}}{\left( 2K_0 + \left( 2K_1 + 2^{1 - \alpha} K_2^{1 - \alpha} \right) \| F(x_k)\|^{\alpha}\right)^{\nicefrac{1}{1 - \alpha}}} \\
        & \leq & \frac{\nu}{2} + \frac{\nu^{\nicefrac{1}{1 - \alpha}}}{2} \\
        & \leq & \nu.        
    \end{eqnarray*}
    The last inequality follows from $\nu \in (0, 1)$. Therefore it is enough to choose $\nu \in (0, 1)$ such that 
    \begin{equation*}
        1 - \frac{2 \nu \mu}{2K_0 + \left( 2 K_1 + 2^{1 - \alpha} K_2^{1 - \alpha} \right) \| F(x_k)\|^{\alpha}} - \nu^2 \geq 0
    \end{equation*}
    However, note that, we always have $\mu \leq K_0$, thus it is enough to choose $\nu \in (0, 1)$ such that 
    \begin{equation*}
        1 - \nu - \nu^2 \geq 0.
    \end{equation*}
    Hence, for this choice of $\nu$ we get
    \begin{eqnarray*}
        \|x_{k+1} - x_*\|^2 & \leq & (1 - \gamma_k \mu) \|x_0 - x_*\|^2.
    \end{eqnarray*}
    Here we lower bound the step size $\gamma_k$ with 
    \begin{eqnarray*}
        \gamma_k \geq \frac{\nu}{2 K_0 + (2 K_1 + 2^{1 - \alpha} K_2^{1 - \alpha})(K_0 + K_2 \|x_0 - x_*\|^{\nicefrac{\alpha}{1 - \alpha}})^{\alpha} \|x_0 - x_*\|^{\alpha}}. 
    \end{eqnarray*}
    Hence we obtain
    \begin{eqnarray*}
        \|x_{k+1} - x_*\|^2 \leq & \left( 1 - \frac{\nu \mu}{2 K_0 + (2 K_1 + 2^{1 - \alpha} K_2^{1 - \alpha})(K_0 + K_2 \|x_0 - x_*\|^{\nicefrac{\alpha}{1 - \alpha}})^{\alpha} \|x_0 - x_*\|^{\alpha}}\right) \|x_k - x_*\|^2 \\
        \leq & \left( 1 - \frac{\nu \mu}{2 K_0 + (2 K_1 + 2^{1 - \alpha} K_2^{1 - \alpha})(K_0 + K_2 \|x_0 - x_*\|^{\nicefrac{\alpha}{1 - \alpha}})^{\alpha} \|x_0 - x_*\|^{\alpha}}\right)^{k+1} \|x_0 - x_*\|^2.
    \end{eqnarray*}
    This completes the proof of the theorem.
\end{proof}

%%%%%%%% MONOTONE %%%%%%%%
\newpage
\subsection{Convergence Guarantees for Monotone Operators}\label{subsec:monotone_proofs}

\begin{lemma}
    Suppose $F$ is a monotone operator. Then \algname{EG} with step size $\gamma_k = \omega_k$ satisfy
    \begin{equation}\label{eq:monotone_EG}
         \left\| x_{k+1} - x_* \right\|^2 \leq \left\| x_k - x_* \right\|^2 - \gamma_k^2 \|F(x_k)\|^2 + \gamma_k^2 \| F(x_k) - F(\hx_k)\|^2.
    \end{equation}
\end{lemma}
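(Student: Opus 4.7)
The plan is to mimic the proof of the strongly monotone counterpart \eqref{eq:strong_monotone_EG}, but with $\mu=0$, so the step using strong convexity just becomes the bare monotonicity inequality $\langle F(\hat x_k), \hat x_k - x_*\rangle \geq 0$ (using $F(x_*)=0$). Concretely, I would start from the update $x_{k+1} = x_k - \gamma_k F(\hat x_k)$ and expand
\begin{equation*}
    \|x_{k+1} - x_*\|^2 = \|x_k - x_*\|^2 - 2\gamma_k \langle F(\hat x_k), x_k - x_*\rangle + \gamma_k^2 \|F(\hat x_k)\|^2.
\end{equation*}

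Next, I would split $\langle F(\hat x_k), x_k - x_*\rangle = \langle F(\hat x_k), \hat x_k - x_*\rangle + \langle F(\hat x_k), x_k - \hat x_k\rangle$. The first term is nonnegative by \eqref{eq:monotone} applied at $\hat x_k$ and $x_*$ (since $F(x_*)=0$). For the second term, I would substitute the extrapolation identity $x_k - \hat x_k = \gamma_k F(x_k)$ to obtain $-2\gamma_k \langle F(\hat x_k), x_k - \hat x_k\rangle = -2\gamma_k^2 \langle F(\hat x_k), F(x_k)\rangle$.

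Finally, applying the polarization identity \eqref{eq:inner_prod} to $-2\gamma_k^2\langle F(\hat x_k), F(x_k)\rangle$ gives $-\gamma_k^2(\|F(\hat x_k)\|^2 + \|F(x_k)\|^2 - \|F(x_k) - F(\hat x_k)\|^2)$. Adding this to the remaining $\gamma_k^2\|F(\hat x_k)\|^2$ term cancels the $\gamma_k^2\|F(\hat x_k)\|^2$ contribution, leaving exactly $-\gamma_k^2\|F(x_k)\|^2 + \gamma_k^2\|F(x_k) - F(\hat x_k)\|^2$, which is the desired bound.

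There is no real obstacle here; the whole argument is a routine bookkeeping exercise combining one application of monotonicity, the extrapolation identity $x_k - \hat x_k = \gamma_k F(x_k)$, and the polarization identity \eqref{eq:inner_prod}. In fact, this lemma is just the $\mu=0$ specialization of \eqref{eq:strong_monotone_EG}, and the proof follows line-by-line from that derivation by dropping the step where $-2\gamma_k\mu\|\hat x_k - x_*\|^2$ is loosened via \eqref{eq:inequality1}.
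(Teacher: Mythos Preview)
Your proposal is correct and follows essentially the same approach as the paper's own proof: expand $\|x_{k+1}-x_*\|^2$, split the inner product via $\hx_k$, drop $\la F(\hx_k),\hx_k-x_*\ra$ using monotonicity and $F(x_*)=0$, substitute $x_k-\hx_k=\gamma_k F(x_k)$, and apply the polarization identity~\eqref{eq:inner_prod} to cancel the $\gamma_k^2\|F(\hx_k)\|^2$ term. Your observation that this is just the $\mu=0$ case of \eqref{eq:strong_monotone_EG} is exactly right.
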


\begin{proof}
    From the update rule of the Extragradient method, we have
    \begin{eqnarray*}
        \left\| x_{k+1} - x_* \right\|^2 & = & \| x_k - \gamma_k F(\hx_k) - x_* \|^2 \notag \\
        & = & \| x_k - x_* \|^2 - 2 \gamma_k \la F(\hx_k), x_k - x_* \ra + \gamma_k^2  \| F(\hx_k) \|^2 \notag \\
        & = & \| x_k - x_* \|^2 - 2 \gamma_k \la F(\hx_k), \hx_k - x_* \ra - 2 \gamma_k \la F(\hx_k), x_k - \hx_k \ra + \gamma_k^2  \| F(\hx_k) \|^2 \notag \\
        & \overset{\eqref{eq:monotone}}{\leq} & \| x_k - x_* \|^2 - 2 \gamma_k \la F(\hx_k), x_k - \hx_k \ra + \gamma_k^2  \| F(\hx_k) \|^2 \\
        & = & \| x_k - x_* \|^2 - 2 \gamma_k^2 \la F(\hx_k), F(x_k) \ra + \gamma_k^2  \| F(\hx_k) \|^2 \\
        & \overset{\eqref{eq:inner_prod}}{=} & \left\| x_k - x_* \right\|^2 - \gamma_k^2 \|F(\hx_k)\|^2 - \gamma_k^2 \|F(x_k)\|^2 + \gamma_k^2 \| F(x_k) - F(\hx_k)\|^2 + \gamma_k^2 \| F(\hx_k)\|^2 \\
        & = & \left\| x_k - x_* \right\|^2 - \gamma_k^2 \|F(x_k)\|^2 + \gamma_k^2 \| F(x_k) - F(\hx_k)\|^2.
    \end{eqnarray*}
\end{proof}

\subsubsection{Proof of Theorem \ref{theorem:1symm_monotone}}

\begin{theorem}
    Suppose $F$ is monotone and $1$-symmetric $(L_0, L_1)$-Lipschitz operator. Then \algname{EG} with step size $\gamma_k = \omega_k = \frac{\nu}{L_0 + L_1 \|F(x_k)\|}$ satisfy 
    \begin{equation*}
        \min_{0 \leq k \leq K} \|F(x_k)\|^2 \leq \frac{2L_0^2 \left( 1 + L_1 \exp{\left( L_1 \|x_0 - x_*\|\right) \|x_0 - x_*\|}\right)^2 \|x_0 - x_*\|^2}{\nu^2(K+1)}.
    \end{equation*}
    where $\nu \exp{\nu} = \nicefrac{1}{\sqrt{2}}$.
\end{theorem}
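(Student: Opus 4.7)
The plan is to combine the standard one--step monotone descent bound \eqref{eq:monotone_EG} with the refined Lipschitz--type inequality \eqref{eq:alpha=1} from Proposition~\ref{prop:equiv_formulation}, exploiting the fact that the extrapolation step gives $\|x_k - \hx_k\| = \gamma_k\|F(x_k)\|$. Plugging this into \eqref{eq:alpha=1} yields
\[
\|F(x_k) - F(\hx_k)\| \leq (L_0 + L_1 \|F(x_k)\|)\exp(L_1 \gamma_k \|F(x_k)\|)\,\gamma_k \|F(x_k)\|.
\]
Since $\gamma_k = \nu/(L_0 + L_1\|F(x_k)\|)$, the two factors $\gamma_k(L_0 + L_1\|F(x_k)\|)$ and $L_1 \gamma_k \|F(x_k)\|$ are bounded by $\nu$. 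This collapses the right-hand side to $\nu\exp(\nu)\|F(x_k)\|$, so that $\gamma_k^2\|F(x_k)-F(\hx_k)\|^2 \leq \nu^2 \exp(2\nu)\gamma_k^2 \|F(x_k)\|^2$.

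Feeding this back into \eqref{eq:monotone_EG} gives a recursion of the form
\[
\|x_{k+1} - x_*\|^2 \leq \|x_k - x_*\|^2 - \bigl(1 - \nu^2\exp(2\nu)\bigr)\gamma_k^2 \|F(x_k)\|^2.
\]
Choosing $\nu$ as the root of $\nu\exp(\nu) = 1/\sqrt{2}$ makes $1 - \nu^2\exp(2\nu) = 1/2$, so telescoping from $k=0$ to $K$ yields $\sum_{k=0}^{K} \gamma_k^2 \|F(x_k)\|^2 \leq 2\|x_0 - x_*\|^2$. Notice as a by-product that $\|x_k - x_*\| \leq \|x_0 - x_*\|$ for every $k$.

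The main obstacle is turning the weighted sum into a bound on $\min_k\|F(x_k)\|^2$, which requires a uniform lower bound on $\gamma_k$. To obtain it I would apply \eqref{eq:alpha=1} with $x = x_*$ and $y = x_k$ (using $F(x_*)=0$) to get $\|F(x_k)\| \leq L_0 \exp(L_1\|x_k - x_*\|)\|x_k - x_*\|$, and then use the already established monotone decrease of $\|x_k - x_*\|$ to replace both occurrences of $\|x_k - x_*\|$ by $\|x_0 - x_*\|$. This gives
\[
\gamma_k \;\geq\; \frac{\nu}{L_0\bigl(1 + L_1 \exp(L_1\|x_0-x_*\|)\|x_0-x_*\|\bigr)}.
\]
Substituting this lower bound for $\gamma_k^2$ into the telescoped inequality, then bounding $\min_k\|F(x_k)\|^2$ by the average $\frac{1}{K+1}\sum_{k=0}^K \|F(x_k)\|^2$, produces exactly the stated rate. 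The chief subtlety is that the pessimistic, distance-independent lower bound on $\gamma_k$ is what introduces the $\exp(L_1\|x_0-x_*\|)$ factor into the final complexity; the subsequent Theorem~\ref{theorem:1symm_monotone_noexp} will later show that this exponential dependence can be removed by a more careful argument that does not require a uniform lower bound on $\gamma_k$.
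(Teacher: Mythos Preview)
Your proposal is correct and follows essentially the same route as the paper: combine \eqref{eq:monotone_EG} with \eqref{eq:alpha=1} applied to $x_k,\hx_k$, use $\gamma_k(L_0+L_1\|F(x_k)\|)=\nu$ to reduce the residual term to $\nu^2\exp(2\nu)\gamma_k^2\|F(x_k)\|^2$, pick $\nu$ via $\nu\exp\nu=1/\sqrt{2}$, telescope, and then lower bound $\gamma_k$ by applying \eqref{eq:alpha=1} at the solution together with the monotone decrease of $\|x_k-x_*\|$. The only cosmetic difference is that the paper first writes $\exp(2\gamma_k L_1\|F(x_k)\|)\le \exp(2\gamma_k(L_0+L_1\|F(x_k)\|))=\exp(2\nu)$ before substituting, whereas you bound $L_1\gamma_k\|F(x_k)\|\le\nu$ directly; the content is identical.
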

\begin{proof}
    From the update rule of the Extragradient method and using monotonicity, we have
    \begin{eqnarray*}
        \left\| x_{k+1} - x_* \right\|^2 
        & \overset{\eqref{eq:monotone_EG}}{\leq} & \left\| x_k - x_* \right\|^2 - \gamma_k^2 \|F(x_k)\|^2 + \gamma_k^2 \| F(x_k) - F(\hx_k)\|^2 \\
        & \overset{\eqref{eq:alpha=1}}{\leq} & \left\| x_k - x_* \right\|^2 - \gamma_k^2 \| F(x_k)\|^2 \\
        && + \gamma_k^2 (L_0 + L_1 \|F(x_k)\|)^2 \exp{\left( 2 L_1 \|x_k - \hx_k \| \right) } \|x_k - \hx_k\|^2 \\
        & = & \left\| x_k - x_* \right\|^2 - \gamma_k^2 \left(1 - \gamma_k^2 (L_0 + L_1 \|F(x_k)\|)^2 \exp{\left( 2 \gamma_k L_1 \|F(x_k) \| \right)} \right) \| F(x_k)\|^2 \\
        & \leq & \left\| x_k - x_* \right\|^2 \\
        && - \gamma_k^2 \left(1 - \gamma_k^2 (L_0 + L_1 \|F(x_k)\|)^2 \exp{\left( 2 \gamma_k (L_0 + L_1 \|F(x_k) \|) \right)} \right) \| F(x_k)\|^2.
    \end{eqnarray*}
     Here we use $\gamma_k = \frac{\nu}{L_0 + L_1 \|F(x_k)\|}$ for some $\nu \in (0, 1)$ to get
    \begin{eqnarray*}
         \left\| x_{k+1} - x_* \right\|^2 & \leq & \left\| x_k - x_* \right\|^2 - \gamma_k^2 \left(1 - \nu^2  \exp{\left( 2 \nu \right)} \right) \| F(x_k)\|^2
    \end{eqnarray*}
    Then we choose $\nu$ such that $\nu \exp{\nu} = \nicefrac{1}{\sqrt{2}}$ to obtain
    \begin{eqnarray}\label{eq:1symm_monotone_eq1}
        \|x_{k+1} - x_*\|^2 & \leq & \|x_k - x_*\|^2 - \frac{\gamma_k^2}{2} \|F(x_k)\|^2.
    \end{eqnarray}
    In particular the distance of the iterates $x_k$ from $x_*$ are bounded i.e. $\|x_{k+1} - x_*\| \leq \|x_k - x_*\| \leq \|x_0 - x_*\|$. Therefore, using \eqref{eq:alpha=1} with $y = x_*$ and $x = x_k$, we get
    \begin{eqnarray*}
        \|F(x_k)\| & \leq & L_0 \exp{ \left( L_1 \|x_k - x_*\|\right)} \|x_k - x_*\| \\
        & \leq & L_0 \exp{\left(L_1\|x_0 - x_*\| \right)} \|x_0 - x_*\|.
    \end{eqnarray*}
    Then we have the lower bound on step size given as follows
    \begin{eqnarray}\label{eq:1symm_monotone_eq2}
        \gamma_k = \frac{\nu}{L_0 + L_1 \|F(x_k)\|} \geq \frac{\nu}{L_0 \left( 1 + L_1 \exp{\left(L_1 \|x_0 - x_*\| \right) \|x_0 - x_*\|}\right)}.
    \end{eqnarray}
    Rearranging \eqref{eq:1symm_monotone_eq1} we have
    \begin{eqnarray*}
        \frac{\gamma_k^2}{2} \|F(x_k)\|^2 & \leq & \|x_{k} - x_*\|^2 - \|x_{k+1} - x_*\|^2.
    \end{eqnarray*}
    Summing up the above inequality for $k = 0, 1, \cdots, K$ and dividing by $K+1$ we get
    \begin{equation}\label{eq:1symm_monotone_eq3}
        \frac{1}{K+1} \sum_{k = 0}^K \frac{\gamma_k^2}{2}\|F(x_k)\|^2 \leq \frac{\|x_0 - x_*\|^2 - \|x_{K+1} - x_*\|^2}{K+1} \leq \frac{\|x_0 - x_*\|^2}{K+1}.
    \end{equation}
    Here we will use the lower bound on step size $\gamma_k$ given in \eqref{eq:1symm_monotone_eq2} to get
    \begin{eqnarray*}
        \frac{1}{K+1} \sum_{k = 0}^K \|F(x_k)\|^2 & \leq & \frac{2L_0^2 \left( 1 + L_1 \exp{\left( L_1 \|x_0 - x_*\|\right) \|x_0 - x_*\|}\right)^2 \|x_0 - x_*\|^2}{\nu^2(K+1)}.
    \end{eqnarray*}
    Finally, note that $\min_{0 \leq k \leq K} \|F(x_k)\|^2 \leq \frac{1}{K+1} \sum_{k = 0}^K \|F(x_k)\|^2$. Therefore we have
    \begin{eqnarray*}
        \min_{0 \leq k \leq K} \|F(x_k)\|^2 & \leq &  \frac{2L_0^2 \left( 1 + L_1 \exp{\left( L_1 \|x_0 - x_*\|\right) \|x_0 - x_*\|}\right)^2 \|x_0 - x_*\|^2}{\nu^2(K+1)}.
    \end{eqnarray*}
    This completes the proof of the Theorem.
\end{proof}

\subsubsection{Proof of Theorem \ref{theorem:1symm_monotone_noexp}}
\begin{theorem}
    Suppose $F$ is monotone and $1$-symmetric $(L_0, L_1)$-Lipschitz operator. Then Extragradient method with step size $\gamma_k = \frac{\nu}{L_0 + L_1 \|F(x_k)\|}$ satisfy 
    \begin{eqnarray*}
        \min_{0 \leq k \leq K} \| F(x_k)\| & \leq & \frac{\sqrt{2} L_0 \|x_0 - x_*\|}{\nu \sqrt{K+1} - \sqrt{2} L_1 \|x_0 - x_*\|}
    \end{eqnarray*}
    where $\nu \exp{\nu} = \nicefrac{1}{\sqrt{2}}$ and $K+1 \geq \frac{2L_1^2 \|x_0 - x_*\|^2}{\nu^2}$.
\end{theorem}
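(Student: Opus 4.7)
The plan is to reuse the per-iteration descent inequality that was already established inside the proof of \Cref{theorem:1symm_monotone}, namely
\begin{equation*}
    \|x_{k+1} - x_*\|^2 \leq \|x_k - x_*\|^2 - \tfrac{\gamma_k^2}{2}\|F(x_k)\|^2,
\end{equation*}
which holds precisely when $\nu \exp(\nu) = 1/\sqrt{2}$. Telescoping over $k = 0, 1, \ldots, K$ gives the summed bound
\begin{equation*}
    \sum_{k=0}^{K} \gamma_k^2 \|F(x_k)\|^2 \;\leq\; 2 \|x_0 - x_*\|^2.
\end{equation*}
So far this is identical to \eqref{eq:1symm_monotone_eq3}. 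The previous proof then lower-bounded each $\gamma_k$ separately using the bounded-iterates argument, which forced the exponential factor $\exp(L_1 \|x_0 - x_*\|)$ into the final rate.

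The crucial new idea is to avoid bounding $\gamma_k$ by itself and instead bound the product $\gamma_k \|F(x_k)\|$ directly. With the chosen step size,
\begin{equation*}
    \gamma_k \|F(x_k)\| \;=\; \frac{\nu \, \|F(x_k)\|}{L_0 + L_1 \|F(x_k)\|} \;=\; \nu\, h(\|F(x_k)\|),
\end{equation*}
where $h(g) := g/(L_0 + L_1 g)$ is strictly increasing in $g \geq 0$ (its derivative equals $L_0/(L_0 + L_1 g)^2 > 0$). Setting $m := \min_{0 \leq k \leq K} \|F(x_k)\|$, monotonicity of $h$ yields $\gamma_k \|F(x_k)\| \geq \nu m/(L_0 + L_1 m)$ for every $k$, so
\begin{equation*}
    (K+1)\left(\frac{\nu m}{L_0 + L_1 m}\right)^2 \;\leq\; \sum_{k=0}^{K} \gamma_k^2 \|F(x_k)\|^2 \;\leq\; 2 \|x_0 - x_*\|^2.
\end{equation*}

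Taking square roots and rearranging linearly in $m$ gives $m(\nu\sqrt{K+1} - \sqrt{2}\,L_1 \|x_0 - x_*\|) \leq \sqrt{2}\, L_0 \|x_0 - x_*\|$. Under the hypothesis $K+1 \geq 2 L_1^2 \|x_0 - x_*\|^2/\nu^2$ the parenthesised factor is nonnegative (strictly positive once we pass this threshold), so dividing through delivers the advertised bound. There is no serious obstacle to the argument; the only subtlety is recognising that one must exploit the monotonicity of $g \mapsto g/(L_0 + L_1 g)$ to move the $\min$ inside the sum without ever invoking $\|F(x_k)\| \leq L_0 \exp(L_1 \|x_0 - x_*\|) \|x_0 - x_*\|$, which is the source of the exponential term in \Cref{theorem:1symm_monotone}. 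The burn-in condition on $K$ arises naturally as the point where the linear-in-$m$ coefficient becomes positive and is unavoidable with this technique.
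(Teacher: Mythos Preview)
Your proof is correct and follows essentially the same route as the paper's. The only cosmetic difference is the order of operations: the paper first passes from the sum to $\min_k \gamma_k^2\|F(x_k)\|^2$, picks an index $k_0$ attaining this minimum, and then algebraically solves $\frac{\nu\|F(x_{k_0})\|}{L_0+L_1\|F(x_{k_0})\|}\le \frac{\sqrt{2}\|x_0-x_*\|}{\sqrt{K+1}}$ for $\|F(x_{k_0})\|$; you instead invoke monotonicity of $h(g)=g/(L_0+L_1 g)$ up front to lower-bound every summand by $(\nu h(m))^2$ and arrive at the same inequality in $m$. Since $h$ is increasing, the minimiser of $\gamma_k\|F(x_k)\|$ and the minimiser of $\|F(x_k)\|$ coincide, so the two presentations are equivalent.
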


\begin{proof}
    From \eqref{eq:1symm_monotone_eq3}, we know steps of Extragradient method satisfy
    \begin{eqnarray*}
        \frac{1}{K+1} \sum_{k = 0}^K \frac{\gamma_k^2}{2} \|F(x_k)\|^2 & \leq & \frac{\|x_0 - x_*\|^2}{K+1}.
    \end{eqnarray*}
    Taking the minimum on the left-hand side we have
    \begin{eqnarray*}
        \min_{0 \leq k \leq K} \gamma_k^2 \|F(x_k)\|^2 & \leq & \frac{2\|x_0 - x_*\|^2}{K+1},
    \end{eqnarray*}
    or equivalently, 
    \begin{eqnarray*}
        \min_{0 \leq k \leq K} \frac{\nu^2 \|F(x_k)\|^2}{(L_0 + L_1 \|F(x_k)\|)^2} & \leq & \frac{2\|x_0 - x_*\|^2}{K+1}.
    \end{eqnarray*}
    Taking the square root on both sides we have
    \begin{eqnarray*}
        \min_{0 \leq k \leq K} \frac{\nu \|F(x_k)\|}{L_0 + L_1 \|F(x_k)\|} & \leq & \frac{\sqrt{2}\|x_0 - x_*\|}{\sqrt{K+1}}.
    \end{eqnarray*}
    Therefore, for some $0 \leq k_0 \leq K$ we have
    \begin{eqnarray*}
        \frac{\|F(x_{k_0})\|}{L_0 + L_1 \|F(x_{k_0})\|} & \leq & \frac{\sqrt{2}\|x_0 - x_*\|}{\nu \sqrt{K+1}}.
    \end{eqnarray*}
    Therefore, rearranging these terms, we get
    \begin{eqnarray*}
       \left( \nu \sqrt{K+1} - \sqrt{2}L_1 \|x_0 - x_*\| \right) \| F(x_{k_0})\| & \leq & \sqrt{2} L_0 \|x_0 - x_*\|.
    \end{eqnarray*}
    When we have $K+1 \geq \frac{2L_1^2 \|x_0 - x_*\|^2}{\nu^2}$ then we can rearrange the terms to obtain
    \begin{eqnarray*}
        \| F(x_{k_0})\| & \leq & \frac{\sqrt{2} L_0 \|x_0 - x_*\|}{\left( \nu \sqrt{K+1} - \sqrt{2}L_1 \|x_0 - x_*\| \right)}.
    \end{eqnarray*}
    for some $0 \leq k_0 \leq K$. Hence, we complete the proof of the theorem
    \begin{eqnarray*}
        \min_{0 \leq k \leq K} \| F(x_k)\| & \leq & \frac{\sqrt{2} L_0 \|x_0 - x_*\|}{ \nu \sqrt{K+1} - \sqrt{2} L_1 \|x_0 - x_*\|}.
    \end{eqnarray*}
\end{proof}

\subsubsection{Proof of Theorem \ref{theorem:alpha01}}
\begin{theorem}
    Suppose $F$ is monotone and $\alpha$-symmetric $(L_0, L_1)$-Lipschitz operator with $\alpha \in (0, 1)$. Then Extragradient method with step size $\gamma_k = \frac{1}{2 \sqrt{2} K_0 + \left(2 \sqrt{2}K_1 + 2^{\nicefrac{3 (1 - \alpha)}{2}} K_2^{1 - \alpha} \right) \|F(x_k)\|^{\alpha}}$ satisfy 
    \begin{eqnarray*}
    \textstyle
        \min_{0 \leq k \leq K} \|F(x_k)\|^2 \leq \frac{16 \left( K_0 + \left(K_1 + 2^{\nicefrac{-3}{2}} K_2^{1 - \alpha} \right) (K_0 + K_2 \|x_0 - x_*\|^{\nicefrac{\alpha}{1 - \alpha}})^{\alpha} \|x_0 - x_*\|^{\alpha}\right)^2 \|x_0 - x_*\|^2}{K+1}.
    \end{eqnarray*}
\end{theorem}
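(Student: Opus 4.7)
The plan is to mirror the proof of Theorem~\ref{theorem:1symm_monotone} but replace the $\alpha=1$ bound \eqref{eq:alpha=1} with its $\alpha\in(0,1)$ counterpart \eqref{eq:alpha(0,1)} from Proposition~\ref{prop:equiv_formulation}. Starting from the monotone descent inequality~\eqref{eq:monotone_EG} and using $\|x_k - \hat x_k\| = \gamma_k\|F(x_k)\|$, I would apply \eqref{eq:alpha(0,1)} with $x=x_k$, $y=\hat x_k$ to bound $\|F(x_k) - F(\hat x_k)\|$, obtaining
$$\|x_{k+1}-x_*\|^2 \le \|x_k-x_*\|^2 - \gamma_k^2\Bigl(1 - \gamma_k^2\bigl(K_0 + K_1\|F(x_k)\|^{\alpha} + K_2\gamma_k^{\alpha/(1-\alpha)}\|F(x_k)\|^{\alpha/(1-\alpha)}\bigr)^2\Bigr)\|F(x_k)\|^2.$$

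The main technical point is to choose $\gamma_k$ so that the bracketed quantity is at least $1/2$, i.e., $\gamma_k(K_0 + K_1\|F(x_k)\|^{\alpha}) + \gamma_k^{1/(1-\alpha)}K_2\|F(x_k)\|^{\alpha/(1-\alpha)} \le 1/\sqrt{2}$. I would enforce this by a splitting trick: require each of the two summands to be at most $1/(2\sqrt{2})$ separately. The first requirement gives $\gamma_k^{-1} \ge 2\sqrt{2}(K_0 + K_1\|F(x_k)\|^{\alpha})$, while the second, after raising to the $(1-\alpha)$-th power, gives $\gamma_k^{-1} \ge (2\sqrt{2})^{1-\alpha}K_2^{1-\alpha}\|F(x_k)\|^{\alpha} = 2^{3(1-\alpha)/2}K_2^{1-\alpha}\|F(x_k)\|^{\alpha}$. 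Summing these two lower bounds on $\gamma_k^{-1}$ recovers precisely the step size proposed in the theorem, so the constraint holds and the descent inequality collapses to $\|x_{k+1}-x_*\|^2 \le \|x_k-x_*\|^2 - \tfrac{\gamma_k^2}{2}\|F(x_k)\|^2$.

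From this contraction the iterates stay in the initial ball, $\|x_k - x_*\| \le \|x_0 - x_*\|$. Applying Proposition~\ref{prop:equiv_formulation} with $x = x_*$, $y = x_k$ kills the $K_1\|F(x_*)\|^\alpha$ term since $F(x_*)=0$, yielding $\|F(x_k)\| \le (K_0 + K_2\|x_k - x_*\|^{\alpha/(1-\alpha)})\|x_k - x_*\| \le (K_0 + K_2\|x_0 - x_*\|^{\alpha/(1-\alpha)})\|x_0 - x_*\|$. Plugging this uniform bound into the formula for $\gamma_k$ produces a uniform lower bound $\gamma_{\min}$ matching the denominator in the theorem's claim. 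Telescoping $\tfrac{\gamma_k^2}{2}\|F(x_k)\|^2 \le \|x_k - x_*\|^2 - \|x_{k+1} - x_*\|^2$ from $k=0$ to $K$, pulling $\min_k \|F(x_k)\|^2$ out of the sum, and using $\gamma_k \ge \gamma_{\min}$ delivers $\min_{0\le k\le K}\|F(x_k)\|^2 \le 2\|x_0-x_*\|^2/((K+1)\gamma_{\min}^2)$; factoring $2\sqrt{2}$ out of $\gamma_{\min}^{-1}$ then exposes the leading constant $16$ and the advertised form.

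The main obstacle is the step-size design: the Lipschitz surrogate \eqref{eq:alpha(0,1)} produces three competing terms of different homogeneity in $\|F(x_k)\|$, and keeping $\gamma_k$ both explicit and large requires the separation trick above; a naive single-constraint approach either leads to an implicit equation for $\gamma_k$ or to an overly conservative step. The rest of the argument is a straightforward adaptation of the monotone Lipschitz EG analysis in~\citet{gorbunov2022extragradient}, with the uniform lower bound on $\gamma_k$ playing the role that a constant step size plays in the classical setting.
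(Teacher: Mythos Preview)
Your proposal is correct and follows essentially the same route as the paper's own proof: the monotone descent inequality~\eqref{eq:monotone_EG}, the $\alpha\in(0,1)$ bound~\eqref{eq:alpha(0,1)}, the splitting of the step-size constraint into two pieces each bounded by $1/(2\sqrt{2})$, the resulting contraction $\|x_{k+1}-x_*\|^2 \le \|x_k-x_*\|^2 - \tfrac{\gamma_k^2}{2}\|F(x_k)\|^2$, and the uniform lower bound on $\gamma_k$ obtained by applying~\eqref{eq:alpha(0,1)} at $x_*$ to bound $\|F(x_k)\|^\alpha$. Your remark that the application of Proposition~\ref{prop:equiv_formulation} should use $x=x_*$ (so that $F(x_*)=0$ kills the $K_1$ term) is in fact the cleaner way to phrase that step.
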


\begin{proof}
     Here operator $F$ is a $\alpha$-symmetric $(L_0, L_1)$-Lipschitz i.e. it satisfies \eqref{eq:alpha(0,1)}. For the update steps of the Extragradient method, we have
    \begin{eqnarray}\label{eq:alpha01}
        \left\| x_{k+1} - x_* \right\|^2 
        & \overset{\eqref{eq:monotone_EG}}{\leq} & \left\| x_k - x_* \right\|^2 - \gamma_k^2 \|F(x_k)\|^2 + \gamma_k^2 \| F(x_k) - F(\hx_k)\|^2 \notag \\
        & \overset{\eqref{eq:alpha(0,1)}}{\leq} & \left\| x_k - x_* \right\|^2 - \gamma_k^2 \| F(x_k)\|^2 \notag \\
        && + \gamma_k^2 \left( K_0 + K_1 \|F(x_k)\|^{\alpha} + K_2 \| x_k - \hx_k \|^{\nicefrac{\alpha}{1 - \alpha}}\right)^2 \|x_k - \hx_k \|^2 \notag\\
        & = & \left\| x_k - x_* \right\|^2 \notag \\
        && \hspace{-3mm} - \gamma_k^2 \left(1 - \gamma_k^2 \left( K_0 + K_1 \|F(x_k)\|^{\alpha} +  \gamma_k^{\nicefrac{\alpha}{1 - \alpha}} K_2 \| F(x_k) \|^{\nicefrac{\alpha}{1 - \alpha}}\right)^2 \right) \|F(x_k)\|^2.
    \end{eqnarray}
    Here we want to choose $\gamma_k$ such that
    \begin{eqnarray*}
        \gamma_k \left( K_0 + K_1 \|F(x_k)\|^{\alpha} \right) + \gamma_k^{\nicefrac{1}{1 - \alpha}} K_2 \| F(x_k) \|^{\nicefrac{\alpha}{1 - \alpha}} & \leq & \frac{1}{\sqrt{2}}.
    \end{eqnarray*}
    For this, it is enough to make sure
    \begin{eqnarray*}
        \gamma_k \left( K_0 + K_1 \| F(x_k)\|^{\alpha} \right) \leq \frac{1}{2 \sqrt{2}} & \text{ and } & \gamma_k^{\nicefrac{1}{1 - \alpha}} K_2 \| F(x_k)\|^{\nicefrac{\alpha}{1 - \alpha}} \leq \frac{1}{2 \sqrt{2}}.
    \end{eqnarray*}
    Therefore, we choose $\gamma_k = \frac{1}{2 \sqrt{2} (K_0 + K_1 \| F(x_k)\|^{\alpha}) + 2^{\nicefrac{3 (1 - \alpha)}{2}} K_2^{1 - \alpha} \|F(x_k)\|^{\alpha}}$ and we get the following from \eqref{eq:alpha01}
    \begin{eqnarray}\label{eq:alpha01_eq3}
        \| x_{k+1} - x_*\|^2 & \leq & \| x_k - x_*\|^2 - \frac{\gamma_k^2}{2} \| F(x_k) \|^2.
    \end{eqnarray}
    Rearranging this inequality, we have
    \begin{eqnarray*}
        \frac{\gamma_k^2}{2} \| F(x_k)\|^2 & \leq & \|x_k - x_*\|^2 - \|x_{k+1} -  x_*\|^2.
    \end{eqnarray*}
    Then we sum up this inequality for $k = 0, 1, \cdots K$ to get
    \begin{eqnarray}\label{eq:alpha01_eq2}
        \frac{1}{K+1}\sum_{k = 0}^K \gamma_k^2 \| F(x_k)\|^2 & \leq & \frac{2\|x_0 - x_*\|^2}{K+1}.
    \end{eqnarray}
    For this step size, we also have $\|x_k - x_0\|^2 \leq \|x_0 - x_*\|^2$ from \eqref{eq:alpha01_eq3}. Now note that from \eqref{eq:alpha(0,1)} we obtain the following bound with $x = x_k$ and $y = x_*$
    \begin{eqnarray*}
        \|F(x_k)\|^{\alpha} & \leq & (K_0 + K_2 \|x_k - x_*\|^{\nicefrac{\alpha}{1 - \alpha}})^{\alpha} \|x_k - x_*\|^{\alpha} \\
        & \overset{\eqref{eq:alpha01_eq3}}{\leq} & (K_0 + K_2 \|x_0 - x_*\|^{\nicefrac{\alpha}{1 - \alpha}})^{\alpha} \|x_0 - x_*\|^{\alpha}.
    \end{eqnarray*}
    We use this to lower bound the step size $\gamma_k$ as follows 
    \begin{eqnarray*}
        \gamma_k & = & \frac{1}{2 \sqrt{2} (K_0 + K_1 \| F(x_k)\|^{\alpha}) + 2^{\nicefrac{3 (1 - \alpha)}{2}} K_2^{1 - \alpha} \|F(x_k)\|^{\alpha}} \\ 
        & \geq & \frac{1}{2 \sqrt{2} K_0 + 2 \sqrt{2} (K_1 + 2^{\nicefrac{-3}{2}} K_2^{1 - \alpha}) (K_0 + K_2 \|x_0 - x_*\|^{\nicefrac{\alpha}{1 - \alpha}})^{\alpha} \|x_0 - x_*\|^{\alpha}}.
    \end{eqnarray*}
    Therefore from \eqref{eq:alpha01_eq2} we obtain
    \begin{eqnarray*}
        \min_{0 \leq k \leq K} \|F(x_k)\|^2 \leq \frac{16 \left( K_0 + (K_1 + 2^{\nicefrac{-3}{2}} K_2^{1 - \alpha}) (K_0 + K_2 \|x_0 - x_*\|^{\nicefrac{\alpha}{1 - \alpha}})^{\alpha} \|x_0 - x_*\|^{\alpha}\right)^2 \|x_0 - x_*\|^2}{K+1}.
    \end{eqnarray*}
\end{proof}

%%%%%%%% WEAK MINTY %%%%%%%%%%%%

\newpage
\subsection{Local Convergence Guarantees for Weak Minty Operator}
\subsubsection{Proof of Theorem \ref{theorem:weak_minty_alpha1}}

\begin{theorem}
    Suppose $F$ is weak Minty and $1$-symmetric $(L_0, L_1)$-Lipschitz assumption. Moreover we assume 
    \begin{equation}
        \Delta_1 \eqdef \frac{\nu}{L_0 \left( 1 + L_1 \|x_0 - x_*\| e^{L_1 \|x_0 - x_*\|}\right)} - 4 \rho > 0.
    \end{equation} Then \algname{EG} with step size $\gamma_k = \frac{\nu}{L_0 + L_1 \|F(x_k)\|}$ and $\omega_k = \nicefrac{\gamma_k}{2}$ satisfies
    \begin{eqnarray}
        \min_{0 \leq k \leq K} \|F(\hx_k)\|^2 & \leq & \frac{4 L_0 \left( 1 + L_1 \exp{\left( L_1 \|x_0 - x_*\|\right) \|x_0 - x_*\|}\right) \|x_0 - x_*\|^2}{\nu \Delta_1 (K+1)} 
    \end{eqnarray}
    where $\nu \exp{\nu} = 1$.
\end{theorem}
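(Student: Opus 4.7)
The plan is to follow the standard weak-Minty template of \citet{diakonikolas2021efficient}, but replace the $L$-Lipschitz control of $\|F(x_k)-F(\hx_k)\|$ with Proposition \ref{prop:equiv_formulation} for $\alpha=1$. Starting from $x_{k+1}=x_k-\omega_k F(\hx_k)$, I expand $\|x_{k+1}-x_*\|^2$ and decompose $\la F(\hx_k), x_k-x_*\ra = \la F(\hx_k),\hx_k-x_*\ra+\la F(\hx_k),x_k-\hx_k\ra$. The first inner product is handled by the weak Minty inequality \eqref{eq:weak_minty}, contributing $+2\omega_k\rho\|F(\hx_k)\|^2$. Because $x_k-\hx_k=\gamma_k F(x_k)$, the second equals $\gamma_k\la F(\hx_k),F(x_k)\ra$, which I split via the polarization identity \eqref{eq:inner_prod} into $\|F(\hx_k)\|^2$, $\|F(x_k)\|^2$ and $\|F(x_k)-F(\hx_k)\|^2$ pieces.

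Specializing $\omega_k=\gamma_k/2$ and gathering terms, I expect to arrive at
\begin{equation*}
\|x_{k+1}-x_*\|^2 \leq \|x_k-x_*\|^2+\gamma_k\rho\|F(\hx_k)\|^2-\tfrac{\gamma_k^2}{4}\|F(\hx_k)\|^2-\tfrac{\gamma_k^2}{2}\|F(x_k)\|^2+\tfrac{\gamma_k^2}{2}\|F(x_k)-F(\hx_k)\|^2.
\end{equation*}
The key calculation is to bound $\|F(x_k)-F(\hx_k)\|^2$. Applying Proposition \ref{prop:equiv_formulation} with $\|x_k-\hx_k\|=\gamma_k\|F(x_k)\|$, the choice $\gamma_k=\nu/(L_0+L_1\|F(x_k)\|)$ makes both $\gamma_k(L_0+L_1\|F(x_k)\|)$ and $\gamma_k L_1\|F(x_k)\|$ bounded by $\nu$, so the bound collapses to $\nu^2 e^{2\nu}\|F(x_k)\|^2=\|F(x_k)\|^2$ thanks to the defining equation $\nu e^\nu=1$. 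This exactly cancels the $-\tfrac{\gamma_k^2}{2}\|F(x_k)\|^2$ term, leaving the clean one-step descent $\|x_{k+1}-x_*\|^2 \leq \|x_k-x_*\|^2-\tfrac{\gamma_k}{4}(\gamma_k-4\rho)\|F(\hx_k)\|^2$.

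The main obstacle is to establish a uniform positive lower bound on $\gamma_k-4\rho$, since $\gamma_k$ itself depends on $\|F(x_k)\|$. I resolve this by induction, mirroring the argument of Theorem \ref{theorem:1symm_strongmonotone}: assuming $\|x_j-x_*\|\leq\|x_0-x_*\|$ for $j\leq k$, Proposition \ref{prop:equiv_formulation} evaluated at $(x_k,x_*)$ together with $F(x_*)=0$ yields $\|F(x_k)\|\leq L_0 e^{L_1\|x_0-x_*\|}\|x_0-x_*\|$. Consequently $\gamma_k\geq\nu/(L_0(1+L_1\|x_0-x_*\|e^{L_1\|x_0-x_*\|}))$, so $\gamma_k-4\rho\geq\Delta_1>0$ by assumption \eqref{eq:restriction_rho}. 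The descent inequality then propagates the bound $\|x_{k+1}-x_*\|\leq\|x_0-x_*\|$, closing the induction.

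Finally, telescoping the descent from $k=0$ to $K$ gives $\sum_{k=0}^{K}\tfrac{\gamma_k}{4}(\gamma_k-4\rho)\|F(\hx_k)\|^2\leq\|x_0-x_*\|^2$. Lower-bounding $\tfrac{\gamma_k(\gamma_k-4\rho)}{4}$ by $\tfrac{\nu\Delta_1}{4 L_0(1+L_1\|x_0-x_*\|e^{L_1\|x_0-x_*\|})}$ and taking the minimum over $k\in\{0,\dots,K\}$ on the left-hand side produces the advertised $\mathcal{O}(1/(K+1))$ rate \eqref{eq:monotone_asymmetric}.
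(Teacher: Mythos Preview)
Your proposal is correct and follows essentially the same approach as the paper: expand $\|x_{k+1}-x_*\|^2$, split the inner product using weak Minty and the polarization identity, bound $\|F(x_k)-F(\hx_k)\|^2$ via Proposition~\ref{prop:equiv_formulation} so that $\nu e^{\nu}=1$ kills the $\|F(x_k)\|^2$ term, and then telescope using the lower bound on $\gamma_k$. You are in fact more explicit than the paper about the inductive structure needed to propagate $\|x_k-x_*\|\leq\|x_0-x_*\|$ (and hence $\gamma_k-4\rho\geq\Delta_1$), which the paper handles only by reference to the monotone case; in the weak Minty setting this induction is genuinely required since the one-step descent is not automatic without first knowing $\gamma_k>4\rho$.
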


\begin{proof}
    From the update rule of the Extragradient method, we have
    \begin{eqnarray*}
        \left\| x_{k+1} - x_* \right\|^2 & = & \left\| x_k - \frac{\gamma_k}{2} F(\hx_k) - x_* \right\|^2 \notag \\
        & = & \| x_k - x_* \|^2 - \gamma_k \la F(\hx_k), x_k - x_* \ra + \frac{\gamma_k^2}{4}  \| F(\hx_k) \|^2 \notag \\
        & = & \| x_k - x_* \|^2 - \gamma_k \la F(\hx_k), \hx_k - x_* \ra - \gamma_k \la F(\hx_k), x_k - \hx_k \ra + \frac{\gamma_k^2}{4} \| F(\hx_k) \|^2 \notag \\
        & \overset{\eqref{eq:weak_minty}}{\leq} & \| x_k - x_* \|^2 + \gamma_k \rho \| F(\hx_k) \|^2  - \gamma_k \la F(\hx_k), x_k - \hx_k \ra + \frac{\gamma_k^2}{4} \| F(\hx_k) \|^2 \\
        & = & \| x_k - x_* \|^2 + \gamma_k \rho \| F(\hx_k) \|^2 - \gamma_k^2 \la F(\hx_k), F(x_k) \ra + \frac{\gamma_k^2}{4}  \| F(\hx_k) \|^2 \\
        & \overset{\eqref{eq:inner_prod}}{=} & \left\| x_k - x_* \right\|^2 + \gamma_k \rho \| F(\hx_k) \|^2 - \frac{\gamma_k^2}{2} \|F(\hx_k)\|^2 - \frac{\gamma_k^2}{2} \|F(x_k)\|^2  \\
        && + \frac{\gamma_k^2}{2} \| F(x_k) - F(\hx_k)\|^2 + \frac{\gamma_k^2}{4} \| F(\hx_k)\|^2 \\
        & = & \left\| x_k - x_* \right\|^2 + \gamma_k \rho \| F(\hx_k) \|^2 - \frac{\gamma_k^2}{4} \|F(\hx_k)\|^2 - \frac{\gamma_k^2}{2} \|F(x_k)\|^2 \\
        && + \frac{\gamma_k^2}{2} \| F(x_k) - F(\hx_k)\|^2  \\
        & \overset{\eqref{eq:alpha=1}}{\leq} & \left\| x_k - x_* \right\|^2 + \gamma_k \rho \| F(\hx_k) \|^2 - \frac{\gamma_k^2}{4} \|F(\hx_k)\|^2  - \frac{\gamma_k^2}{2} \| F(x_k)\|^2 \\
        && + \frac{\gamma_k^2}{2} (L_0 + L_1 \|F(x_k)\|)^2 \exp{\left( 2 L_1 \|x_k - \hx_k \| \right) } \|x_k - \hx_k\|^2 \\
        & = & \left\| x_k - x_* \right\|^2 - \frac{\gamma_k}{4} \left( \gamma_k - 4 \rho \right) \| F(\hx_k) \|^2 \\
        && - \frac{\gamma_k^2}{2} \left(1 - \gamma_k^2 (L_0 + L_1 \|F(x_k)\|)^2 \exp{\left( 2 \gamma_k L_1 \|F(x_k) \| \right)} \right) \| F(x_k)\|^2 \\
        & \leq & \left\| x_k - x_* \right\|^2 - \frac{\gamma_k}{4} \left( \gamma_k - 4 \rho \right) \| F(\hx_k) \|^2 \\ 
        && - \frac{\gamma_k^2}{2} \left(1 - \gamma_k^2 (L_0 + L_1 \|F(x_k)\|)^2 \exp{\left( 2 \gamma_k (L_0 + L_1 \|F(x_k) \|) \right)} \right) \| F(x_k)\|^2.
    \end{eqnarray*}
    Similar to the proof of Theorem \ref{theorem:1symm_monotone}, we have
    \begin{eqnarray*}
        \|x_{k+1} - x_*\|^2 & \leq & \|x_k - x_*\|^2 - \frac{\gamma_k}{4}(\gamma_k - 4 \rho) \| F(\hx_k)\|^2
    \end{eqnarray*}
    for $\gamma_k = \frac{\nu}{L_0 + L_1 \| F(x_k)\|}$ and $\nu \exp{\nu} = 1$. Again similar to Theorem \ref{theorem:1symm_monotone}, step size $\gamma_k$ is lower bounded with 
    \begin{equation*}
       \gamma_k = \frac{\nu}{L_0 + L_1 \|F(x_k)\|} \geq \frac{\nu}{L_0 \left( 1 + L_1 \exp{\left(L_1 \|x_0 - x_*\| \right) \|x_0 - x_*\|}\right)}.
    \end{equation*}
    Hence from \eqref{eq:restriction_rho} we get
    \begin{eqnarray*}
        \frac{\gamma_k \Delta_1}{4} \|F(\hx_k)\|^2 & \leq & \|x_k - x_*\|^2 - \|x_{k+1} - x_*\|^2
    \end{eqnarray*}
    Then we sum up this inequality for $k = 0, 1, \cdots, K$ to get
    \begin{eqnarray*}
        \frac{1}{K+1}\sum_{k = 0}^K \frac{\gamma_k \Delta_1}{4} \| F(\hx_k)\|^2 & \leq & \frac{\|x_0 - x_*\|^2}{K+1}.
    \end{eqnarray*}
    Therefore, we get
    \begin{equation*}
        \min_{0 \leq k \leq K} \| F(\hx_k)\|^2 \leq \frac{4 \|x_0 - x_*\|^2}{\gamma \Delta_1 (K+1)}.
    \end{equation*}
\end{proof}

\subsubsection{Proof of Theorem \ref{theorem:weak_minty_alpha01}}
\begin{theorem}
    Suppose $F$ is weak Minty and $\alpha$-symmetric $(L_0, L_1)$-Lipschitz with $\alpha \in (0, 1)$. Moreover we assume 
    \begin{equation*}
        \Delta_{\alpha} \eqdef \frac{1}{2 \sqrt{2} K_0 + 2 \sqrt{2} (K_1 + 2^{\nicefrac{-3}{2}} K_2^{1 - \alpha}) (K_0 + K_2 \|x_0 - x_*\|^{\nicefrac{\alpha}{1 - \alpha}})^{\alpha} \|x_0 - x_*\|^{\alpha}} - 4 \rho > 0.
    \end{equation*} 
    Then \algname{EG} with step size $\gamma_k = \frac{1}{2 \sqrt{2} K_0 + \left(2 \sqrt{2}K_1 + 2^{\nicefrac{3 (1 - \alpha)}{2}} K_2^{1 - \alpha} \right) \|F(x_k)\|^{\alpha}}$ and $\omega_k = \frac{\gamma_k}{2}$ satisfy 
    \begin{eqnarray*}
    \textstyle
        \min_{0 \leq k \leq K} \|F(\hx_k)\|^2 \leq \frac{4 \left( K_0 + \left(K_1 + 2^{\nicefrac{-3}{2}} K_2^{1 - \alpha} \right) (K_0 + K_2 \|x_0 - x_*\|^{\nicefrac{\alpha}{1 - \alpha}})^{\alpha} \|x_0 - x_*\|^{\alpha}\right) \|x_0 - x_*\|^2}{\Delta_{\alpha}(K+1)}.
    \end{eqnarray*}
\end{theorem}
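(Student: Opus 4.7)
The plan is to combine two threads already developed in the paper: the expansion of $\|x_{k+1}-x_\ast\|^2$ used for weak Minty operators with $\omega_k=\gamma_k/2$ (as in the proof of Theorem~\ref{theorem:weak_minty_alpha1}) and the way Theorem~\ref{theorem:alpha01} controls the one-step discrepancy $\|F(x_k)-F(\hat x_k)\|$ under the $\alpha$-symmetric condition with $\alpha\in(0,1)$. First I would expand $\|x_{k+1}-x_\ast\|^2$ using $x_{k+1}=x_k-(\gamma_k/2)F(\hat x_k)$, split $\langle F(\hat x_k),x_k-x_\ast\rangle=\langle F(\hat x_k),\hat x_k-x_\ast\rangle+\langle F(\hat x_k),x_k-\hat x_k\rangle$, apply the weak Minty bound~\eqref{eq:weak_minty} to the first term, and use $x_k-\hat x_k=\gamma_k F(x_k)$ together with~\eqref{eq:inner_prod} for the second. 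This yields, exactly as in the proof of Theorem~\ref{theorem:weak_minty_alpha1},
\[
\|x_{k+1}-x_\ast\|^2 \le \|x_k-x_\ast\|^2 - \tfrac{\gamma_k}{4}(\gamma_k-4\rho)\|F(\hat x_k)\|^2 - \tfrac{\gamma_k^2}{2}\|F(x_k)\|^2 + \tfrac{\gamma_k^2}{2}\|F(x_k)-F(\hat x_k)\|^2.
\]

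Next, I would bound $\|F(x_k)-F(\hat x_k)\|$ using Proposition~\ref{prop:equiv_formulation} with $\|x_k-\hat x_k\|=\gamma_k\|F(x_k)\|$, producing a factor $\bigl(K_0+K_1\|F(x_k)\|^\alpha+\gamma_k^{\alpha/(1-\alpha)}K_2\|F(x_k)\|^{\alpha/(1-\alpha)}\bigr)^2$. The specific form of $\gamma_k$ in the theorem is, just as in Theorem~\ref{theorem:alpha01}, tuned so that both
\[
\gamma_k(K_0+K_1\|F(x_k)\|^\alpha)\le \tfrac{1}{2\sqrt{2}} \qquad \text{and} \qquad \gamma_k^{1/(1-\alpha)}K_2\|F(x_k)\|^{\alpha/(1-\alpha)}\le \tfrac{1}{2\sqrt{2}}
\]
hold simultaneously. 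Squaring the sum shows the bracketed quantity is at most $1/2$, so the last two summands above combine to a non-positive term that can be dropped, leaving the clean recursion
\[
\|x_{k+1}-x_\ast\|^2 \le \|x_k-x_\ast\|^2 - \tfrac{\gamma_k}{4}(\gamma_k-4\rho)\|F(\hat x_k)\|^2.
\]

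Provided $\gamma_k\ge 4\rho$, this recursion forces $\|x_k-x_\ast\|\le\|x_0-x_\ast\|$; plugging that into~\eqref{eq:alpha(0,1)} with $y=x_\ast$ gives $\|F(x_k)\|^\alpha\le (K_0+K_2\|x_0-x_\ast\|^{\alpha/(1-\alpha)})^\alpha\|x_0-x_\ast\|^\alpha$, which in turn lower bounds $\gamma_k$ by precisely the quantity in the definition of $\Delta_\alpha$. Combined with the hypothesis~\eqref{eq:restriction_rho_alpha}, we then obtain $\gamma_k-4\rho\ge\Delta_\alpha>0$ uniformly in $k$. Telescoping the recursion over $k=0,\dots,K$, dividing by $K+1$, and replacing the resulting average by the minimum produces the stated bound.

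The main obstacle is the circular dependence between the boundedness of the iterates and the uniform lower bound on $\gamma_k$: descent of $\|x_k-x_\ast\|$ requires $\gamma_k\ge 4\rho$, while the uniform bound on $\gamma_k$ requires the iterates to already be bounded by $\|x_0-x_\ast\|$. I would resolve this by a simple induction on $k$: at $k=0$ the bound $\gamma_0\ge 4\rho+\Delta_\alpha$ holds directly by~\eqref{eq:restriction_rho_alpha}, so the one-step descent applies, and the inductive step repeats the same argument at $x_{k+1}$. Apart from this bookkeeping, the rest is a direct combination of Proposition~\ref{prop:equiv_formulation} and the calculations in the proofs of Theorems~\ref{theorem:alpha01} and~\ref{theorem:weak_minty_alpha1}.
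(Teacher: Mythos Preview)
Your proposal is correct and follows essentially the same route as the paper's proof: expand $\|x_{k+1}-x_\ast\|^2$ with $\omega_k=\gamma_k/2$, apply weak Minty and~\eqref{eq:inner_prod}, bound $\|F(x_k)-F(\hat x_k)\|$ via Proposition~\ref{prop:equiv_formulation}, use the specific $\gamma_k$ to make the $\|F(x_k)\|^2$-terms nonpositive, lower bound $\gamma_k$ through the bound on $\|F(x_k)\|^\alpha$, and telescope. In fact you are slightly more careful than the paper, which simply asserts $\|x_k-x_\ast\|\le\|x_0-x_\ast\|$ from the recursion without spelling out the induction that resolves the circularity you identified.
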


\begin{proof}
    From the update rule of the Extragradient method, we have
    \begin{eqnarray*}
        \left\| x_{k+1} - x_* \right\|^2 & = & \left\| x_k - \frac{\gamma_k}{2} F(\hx_k) - x_* \right\|^2 \notag \\
        & = & \| x_k - x_* \|^2 - \gamma_k \la F(\hx_k), x_k - x_* \ra + \frac{\gamma_k^2}{4}  \| F(\hx_k) \|^2 \notag \\
        & = & \| x_k - x_* \|^2 - \gamma_k \la F(\hx_k), \hx_k - x_* \ra - \gamma_k \la F(\hx_k), x_k - \hx_k \ra + \frac{\gamma_k^2}{4} \| F(\hx_k) \|^2 \notag \\
        & \overset{\eqref{eq:weak_minty}}{\leq} & \| x_k - x_* \|^2 + \gamma_k \rho \| F(\hx_k) \|^2  - \gamma_k \la F(\hx_k), x_k - \hx_k \ra + \frac{\gamma_k^2}{4} \| F(\hx_k) \|^2 \\
        & = & \| x_k - x_* \|^2 + \gamma_k \rho \| F(\hx_k) \|^2 - \gamma_k^2 \la F(\hx_k), F(x_k) \ra + \frac{\gamma_k^2}{4}  \| F(\hx_k) \|^2 \\
        & \overset{\eqref{eq:inner_prod}}{=} & \left\| x_k - x_* \right\|^2 + \gamma_k \rho \| F(\hx_k) \|^2 - \frac{\gamma_k^2}{2} \|F(\hx_k)\|^2 - \frac{\gamma_k^2}{2} \|F(x_k)\|^2 \\
        && + \frac{\gamma_k^2}{2} \| F(x_k) - F(\hx_k)\|^2  + \frac{\gamma_k^2}{4} \| F(\hx_k)\|^2 \\
        & = & \left\| x_k - x_* \right\|^2 + \gamma_k \rho \| F(\hx_k) \|^2 - \frac{\gamma_k^2}{4} \|F(\hx_k)\|^2 - \frac{\gamma_k^2}{2} \|F(x_k)\|^2 \\
        && + \frac{\gamma_k^2}{2} \| F(x_k) - F(\hx_k)\|^2  \\
        & \overset{\eqref{eq:alpha=1}}{\leq} & \left\| x_k - x_* \right\|^2 + \gamma_k \rho \| F(\hx_k) \|^2 - \frac{\gamma_k^2}{4} \|F(\hx_k)\|^2  - \frac{\gamma_k^2}{2} \| F(x_k)\|^2 \\
        && + \frac{\gamma_k^2}{2} \left( K_0 + K_1 \|F(x_k)\|^{\alpha} +  K_2 \| x_k - \hx_k \|^{\nicefrac{\alpha}{1 - \alpha}}\right)^2 \|x_k - \hx_k\|^2 \\
        & = & \left\| x_k - x_* \right\|^2 - \frac{\gamma_k}{4} \left( \gamma_k - 4 \rho \right) \| F(\hx_k) \|^2 \\
        && - \frac{\gamma_k^2}{2} \left(1 - \gamma_k^2 \left( K_0 + K_1 \|F(x_k)\|^{\alpha} +  \gamma_k^{\nicefrac{\alpha}{1 - \alpha}} K_2 \| F(x_k) \|^{\nicefrac{\alpha}{1 - \alpha}}\right)^2 \right) \| F(x_k)\|^2 .
    \end{eqnarray*}
     Here we choose $\gamma_k = \frac{1}{2 (K_0 + K_1 \| F(x_k)\|^{\alpha}) + 2^{\nicefrac{3 (1 - \alpha)}{2}} K_2^{1 - \alpha} \|F(x_k)\|^{\alpha}}$ and we get the following from \eqref{eq:alpha01}
    \begin{eqnarray}\label{eq:alpha01_eq3}
        \| x_{k+1} - x_*\|^2 & \leq & \| x_k - x_*\|^2 - \frac{\gamma_k}{4} \left( \gamma_k - 4 \rho \right) \| F(\hx_k) \|^2.
    \end{eqnarray}
    Rearranging this inequality, we have
    \begin{eqnarray*}
       \frac{\gamma_k}{4} \left( \gamma_k - 4 \rho \right) \| F(\hx_k) \|^2 & \leq & \|x_k - x_*\|^2 - \|x_{k+1} -  x_*\|^2.
    \end{eqnarray*}
    Then we sum up this inequality for $k = 0, 1, \cdots K$ to get
    \begin{eqnarray}\label{eq:alpha01_eq2}
        \frac{1}{K+1}\sum_{k = 0}^K \frac{\gamma_k}{4} \left( \gamma_k - 4 \rho \right) \| F(\hx_k) \|^2 & \leq & \frac{2\|x_0 - x_*\|^2}{K+1}.
    \end{eqnarray}
    For this step size, we also have $\|x_k - x_*\|^2 \leq \|x_0 - x_*\|^2$ from \eqref{eq:alpha01_eq3}. Now note that from \eqref{eq:alpha(0,1)} we obtain the following bound with $x = x_k$ and $y = x_*$
    \begin{eqnarray*}
        \|F(x_k)\|^{\alpha} & \leq & (K_0 + K_2 \|x_k - x_*\|^{\nicefrac{\alpha}{1 - \alpha}})^{\alpha} \|x_k - x_*\|^{\alpha} \\
        & \overset{\eqref{eq:alpha01_eq3}}{\leq} & (K_0 + K_2 \|x_0 - x_*\|^{\nicefrac{\alpha}{1 - \alpha}})^{\alpha} \|x_0 - x_*\|^{\alpha}.
    \end{eqnarray*}
    We use this to lower bound the step size $\gamma_k$ as follows 
    \begin{eqnarray*}
        \gamma_k & = & \frac{1}{2 \sqrt{2} (K_0 + K_1 \| F(x_k)\|^{\alpha}) + 2^{\nicefrac{3 (1 - \alpha)}{2}} K_2^{1 - \alpha} \|F(x_k)\|^{\alpha}} \\ 
        & \geq & \frac{1}{2 \sqrt{2} K_0 + 2 \sqrt{2} (K_1 + 2^{\nicefrac{-3}{2}} K_2^{1 - \alpha}) (K_0 + K_2 \|x_0 - x_*\|^{\nicefrac{\alpha}{1 - \alpha}})^{\alpha} \|x_0 - x_*\|^{\alpha}}.
    \end{eqnarray*}
    Therefore from \eqref{eq:alpha01_eq2} we obtain
    \begin{eqnarray*}
        \min_{0 \leq k \leq K} \|F(x_k)\|^2 \leq \frac{4 \|x_0 - x_*\|^2}{\gamma \Delta (K+1)}.
    \end{eqnarray*}
\end{proof}

\newpage
\section{Equivalent Formulation of $\alpha$-Symmetric $(L_0, L_1)$-Lipschitz Assumption}\label{appendix:equiv_formulation}
In this section, we consider the min-max optimization problem given by $\min_{w_1} \max_{w_2} \mathcal{L}(w_1, w_2)$ and provide an equivalent formulation of $\alpha$-symmetric $(L_0, L_1)$-Lipschitz operator. Next, we provide an example where we use this formulation to compute the constants $\alpha, L_0, L_1$.

\subsection{Proof of Theorem \ref{thm:equiv_formulation}}
\begin{theorem}
    Suppose $F$ is the operator for the problem
    \begin{equation*}
        \min_{w_1} \max_{w_2} \mathcal{L}(w_1, w_2).
    \end{equation*}
    Then $F$ satisfies $\alpha$-symmetric $(L_0, L_1)$-Lipschitz assumption if and only if
    \begin{equation*}
        \|\mathbf{J}(x)\| = \sup_{\|u\| = 1}\|\mathbf{J}(x) u\| \leq L_0 + L_1 \|F(x)\|^{\alpha}
    \end{equation*}
    where 
    \begin{equation*}
        \mathbf{J}(x) = \begin{bmatrix}
        \nabla^2_{w_1 w_1} \mathcal{L}(w_1, w_2) & \nabla^2_{w_2 w_1} \mathcal{L}(w_1, w_2) \\
        -\nabla^2_{w_1 w_2} \mathcal{L}(w_1, w_2) & -\nabla^2_{w_2 w_2} \mathcal{L}(w_1, w_2)
        \end{bmatrix}.
    \end{equation*}
    Here $\| \mathbf{J}(x)\| = \sigma_{\max}(\mathbf{J}(x))$ i.e. maximum singular value of $\mathbf{J}(x)$.
\end{theorem}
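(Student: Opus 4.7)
The plan is to prove both implications via the fundamental theorem of calculus and a differential argument, exploiting that $\|\mathbf{J}(x)\|$ is the spectral (operator) norm so that $\|\mathbf{J}(x)\| = \sup_{\|u\|=1} \|\mathbf{J}(x)u\|$ and that $F$ is differentiable with Jacobian $\mathbf{J}$.

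For the direction $\eqref{eq:equiv_formulation} \Rightarrow \eqref{eq:(L0,L1)-Lipschitz}$, I would parametrize the segment $z(\theta) = \theta x + (1-\theta) y$ and use
\begin{equation*}
F(x) - F(y) = \int_0^1 \mathbf{J}(z(\theta))(x - y)\, d\theta.
\end{equation*}
Passing the norm inside the integral (triangle inequality for Bochner-type integrals) and applying the operator-norm inequality $\|\mathbf{J}(z)v\| \leq \|\mathbf{J}(z)\|\,\|v\|$, together with the hypothesis $\|\mathbf{J}(z)\| \leq L_0 + L_1 \|F(z)\|^\alpha$, yields
\begin{equation*}
\|F(x) - F(y)\| \leq \left( L_0 + L_1 \int_0^1 \|F(z(\theta))\|^\alpha\, d\theta \right) \|x - y\|.
\end{equation*}
Bounding the integrand by its maximum over $\theta \in [0,1]$ recovers \eqref{eq:(L0,L1)-Lipschitz} directly; alternatively, one can cite Lemma \ref{lemma:reform_integration} which already proves that the integral form is equivalent to the max form.

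For the converse $\eqref{eq:(L0,L1)-Lipschitz} \Rightarrow \eqref{eq:equiv_formulation}$, I would localize: fix $x \in \R^d$ and a unit vector $u$, and apply \eqref{eq:(L0,L1)-Lipschitz} to the pair $(x + tu, x)$ for small $t > 0$. This gives
\begin{equation*}
\frac{\|F(x + tu) - F(x)\|}{t} \leq L_0 + L_1 \max_{\theta \in [0,1]} \|F(x + \theta t u)\|^\alpha.
\end{equation*}
Differentiability of $F$ yields $\lim_{t \to 0^+} \tfrac{1}{t}\|F(x+tu) - F(x)\| = \|\mathbf{J}(x)u\|$, while continuity of $F$ (hence of $z \mapsto \|F(z)\|^\alpha$) collapses the maximum on the right to $\|F(x)\|^\alpha$ in the limit. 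Taking the supremum over $\|u\|=1$ gives $\|\mathbf{J}(x)\| \leq L_0 + L_1 \|F(x)\|^\alpha$. The in-particular claim for $L$-Lipschitz $F$ follows from specializing $L_0 = L$, $L_1 = 0$.

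The main (minor) obstacle is the limit passage in the converse: one must verify that the maximum over $\theta \in [0,1]$ of $\|F(x + \theta t u)\|^\alpha$ converges to $\|F(x)\|^\alpha$ as $t \to 0$. This follows from uniform continuity of $F$ on the compact segment and the continuity of $r \mapsto r^\alpha$ on $[0,\infty)$ for $\alpha \in (0,1]$. Beyond this, everything is a direct application of the fundamental theorem of calculus and the definition of the spectral norm; no delicate estimates or additional structural assumptions are needed.
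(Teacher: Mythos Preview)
Your proposal is correct and follows essentially the same approach as the paper's proof: the paper also uses the fundamental theorem of calculus for the direction $\eqref{eq:equiv_formulation}\Rightarrow\eqref{eq:(L0,L1)-Lipschitz}$ (finishing via Lemma~\ref{lemma:reform_integration}), and for the converse it likewise substitutes $y=x+\theta' u$ with $\|u\|=1$, divides by $\theta'$, and passes to the limit $\theta'\to 0$. The only cosmetic difference is that the paper starts from the integral formulation of Lemma~\ref{lemma:reform_integration} and applies L'H\^opital/Leibniz to the integral average $\tfrac{1}{\theta'}\int_0^{\theta'}\|F(x+\varphi u)\|^\alpha\,d\varphi$, whereas you take the limit of the max directly via continuity; both arguments are equivalent.
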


\begin{proof}
    Following \eqref{eq:reform_integration}, we have the equivalent characterization of $F$ given by
    \begin{equation*}
        \|F(y) - F(x)\| \leq \left( L_0 + L_1 \int_{0}^1 \left\| F(\theta y + (1 - \theta)x)\right\|^{\alpha} d\theta \right) \|y - x\| \qquad \forall x, y \in \R^d.
    \end{equation*}
    As this inequality holds for any $x, y \in \R^d$, we choose $y = x + \theta' u$ where $\|u\| = 1$ and $\theta' \in (0, 1)$. Then we get
    \begin{equation*}
        \|F(x + \theta' u) - F(x)\| \leq \left( L_0 + L_1 \int_{0}^1 \left\| F(x + \theta' \theta u) \right\|^{\alpha} d\theta \right) \|\theta' u\| \qquad \forall x \in \R^d.
    \end{equation*}
    The right-hand side of this inequality can be rewritten as 
    \begin{eqnarray*}
        \left( L_0 + L_1 \int_{0}^1 \left\| F(x + \theta' \theta u) \right\|^{\alpha} d\theta \right) \|\theta' u\| & = & \theta' \left( L_0 + L_1 \int_{0}^1 \left\| F(x + \theta' \theta u) \right\|^{\alpha} d\theta \right) \\
        & = & L_0 \theta' + L_1 \int_{0}^1 \left\| F(x + \theta' \theta u) \right\|^{\alpha} \theta' d\theta \\
        & = & L_0 \theta' + L_1 \int_{0}^{\theta'} \left\| F(x + \varphi u) \right\|^{\alpha} d\varphi.
    \end{eqnarray*}
    In the last line, we used the change of variable with $\varphi = \theta' \theta$. Therefore, we get
    \begin{equation*}
        \frac{\|F(x + \theta' u) - F(x)\|}{\theta'} \leq L_0 + \frac{L_1}{\theta'} \int_{0}^{\theta'} \left\| F(x + \varphi u) \right\|^{\alpha} d\varphi.
    \end{equation*}
    Then we take $\theta' \to 0$ and use L'Hôpital's rule and Leibniz Integral rule to obtain
    \begin{equation*}
        \lim_{\theta' \to 0} \frac{\|F(x + \theta' u) - F(x)\|}{\theta'} \leq L_0 + L_1 \left\| F(x) \right\|^{\alpha}.
    \end{equation*}
    Moreover, note that the left-hand side is given by $\|\mathbf{J}(x)u\|$ where
    \begin{equation*}
        \mathbf{J}(x) = \begin{bmatrix}
        \nabla^2_{w_1 w_1} \mathcal{L}(w_1, w_2) & \nabla^2_{w_2 w_1} \mathcal{L}(w_1, w_2) \\
        -\nabla^2_{w_1 w_2} \mathcal{L}(w_1, w_2) & -\nabla^2_{w_2 w_2} \mathcal{L}(w_1, w_2)
        \end{bmatrix}.
    \end{equation*}
    Therefore, for any $\|u\| = 1$ we have 
    \begin{equation*}
        \|\mathbf{J}(x) u\| \leq L_0 + L_1 \|F(x)\|^{\alpha}.
    \end{equation*}
    Hence we get 
    \begin{equation*}
        \|\mathbf{J}(x)\| = \sup_{\|u\| = 1}\|\mathbf{J}(x) u\| \leq L_0 + L_1 \|F(x)\|^{\alpha}.
    \end{equation*}
    Now we want to show the other way, i.e. suppose we have $\|\mathbf{J}(x)\| \leq L_0 + L_1 \|F(x)\|^{\alpha}$. For this we define, 
    \begin{equation*}
        q(\theta) \eqdef F(\theta x + (1 - \theta) y).      
    \end{equation*}
    Then $q(1) = F(x)$ and $q(0) = F(y)$ and we have
    \begin{eqnarray*}
        \|F(x) - F(y)\| & = & \|q(1) - q(0)\|\\
        & = & \left\| \int_0^1 \frac{dq(\theta)}{d\theta} d\theta \right\| \\
        & = & \left\| \int_0^1 \frac{dF(\theta x + (1 - \theta) y)}{d\theta} d\theta \right\| \\
        & = & \left\| \int_0^1 \mathbf{J}(\theta x + (1 - \theta) y) (x - y) d\theta \right\| \\
        & \leq  &  \int_0^1 \left\|\mathbf{J}(\theta x + (1 - \theta) y)\right\| \left\|x - y\right\| d\theta \\
        & = &  \left(\int_0^1 \left\|\mathbf{J}(\theta x + (1 - \theta) y)\right\|d\theta \right) \left\|x - y\right\| \\
        & \leq &  \left(\int_0^1L_0 + L_1 \|F(\theta x + (1 - \theta) y)\|^{\alpha} d\theta \right) \left\|x - y\right\| \\
        & = &  \left(L_0 + L_1 \int_0^1 \|F(\theta x + (1 - \theta) y)\|^{\alpha} d\theta \right) \left\|x - y\right\|.
    \end{eqnarray*}
    Then, using Lemma \ref{lemma:reform_integration}, we have the result. 
\end{proof}

\subsection{Computation of $\alpha, L_0, L_1$ for $\mathcal{L}(w_1, w_2)$.}\label{sec:compute_L0L1}

We now revisit the min-max problem defined in~\eqref{eq:min_max_cubic}. Note that, the operator corresponding to this problem is given by 
\begin{equation*}
    F(x) = \begin{bmatrix}
        w_1^2 + w_2 \\
        w_2^2 - w_1
    \end{bmatrix}
\end{equation*}
Then the norm of operator is $\| F(x)\| = \sqrt{\left(w_1^2 + w_2 \right)^2 + \left(w_2^2 - w_1 \right)^2}$. Moreover, the Jacobian matrix is given by
\begin{equation*}
    \mathbf{J}(x) = \begin{bmatrix}
        2 w_1 & 1 \\
        -1 & 2 w_2
    \end{bmatrix}.
\end{equation*}
Then the maximum singular value at any point $x$ is given by
\begin{eqnarray}
    \|\mathbf{J}(x)\| & = & \lambda_{\max} \left( \mathbf{J}(x)^{\top} \mathbf{J}(x) \right) \notag \\
    & = & \lambda_{\max} \left( \begin{bmatrix}
        4 w_1^2 + 1 & 2(w_1 - w_2) \\
        2(w_1 - w_2) & 4 w_2^2 + 1
    \end{bmatrix} \right)  \notag \\
    & \overset{\eqref{eq:eigen_2dmatrix}}{=} & \sqrt{2 (w_1^2 + w_2^2) + 1 + 2 \sqrt{(w_1 - w_2)^2 + (w_1^2 - w_2^2)^2}}
\end{eqnarray}

\begin{figure}
    \centering
     \includegraphics[width=0.6\linewidth]{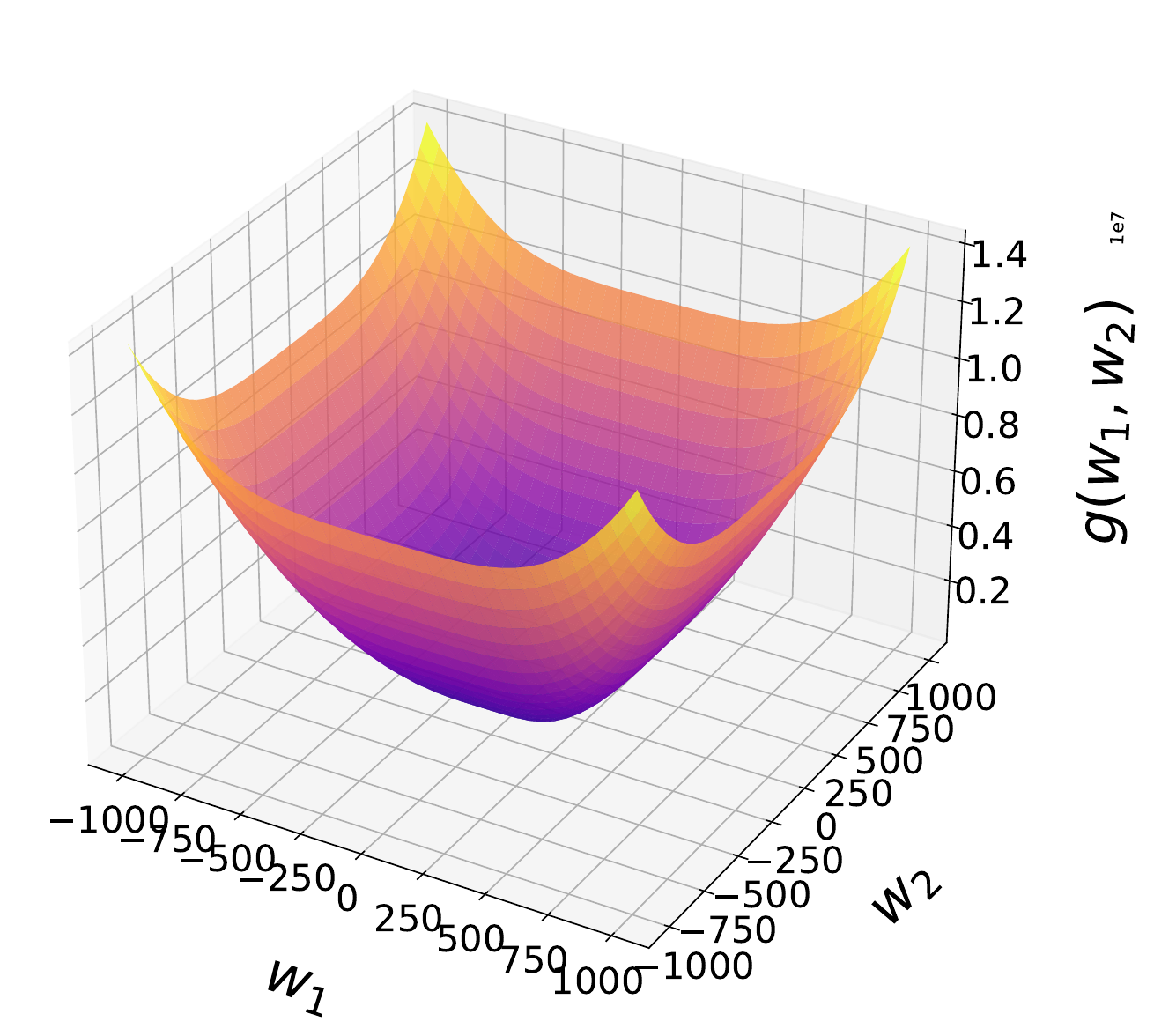}
     \caption{Plot of $g(w_1, w_2)$~\eqref{eq:compute_L0L1}. Here, the $z$-axis is in $10^7$ scale.}\label{fig:compute_L0L1}
\end{figure}

To validate whether the operator $F$ satisfies the condition~\eqref{eq:(L0,L1)-Lipschitz}, we examine whether the following function is non-negative:
\begin{equation}\label{eq:compute_L0L1}
\textstyle
    g(w_1, w_2) = L_0 + L_1 \|F(x)\| - \| \mathbf{J}(x)\|.
\end{equation}
In Figure~\ref{fig:compute_L0L1}, we plot $g(w_1, w_2)$ using $L_0 = 10$ and $L_1 = 10$. We observe that $g(w_1, w_2)$ has no real solution and remains positive for all $w_1, w_2 \in \mathbb{R}$, confirming that the function~\eqref{eq:min_max_cubic} satisfies~\eqref{eq:equiv_formulation} with $(\alpha, L_0, L_1) = (1, 10, 10)$. Thus, the corresponding operator $F$ is $1$-symmetric $(10, 10)$-Lipschitz.

\newpage
\section{Additional Details on Numerical Experiments}\label{appendix:num_exp}

In this section, we provide additional details on the second experiment related to the monotone problem. First, we show that $$\mathcal{L}(w_1, w_2) = \frac{1}{3} \left( w_1^{\top} \A w_1 \right)^{\nicefrac{3}{2}} + w_1^{\top} \B w_2 - \frac{1}{3} \left(w_2^{\top} \C w_2 \right)^{\nicefrac{3}{2}}$$ is convex-concave, and then we find the equilibrium point of $\mathcal{L}$.

\paragraph{Convex-Concave $\mathcal{L}(w_1, w_2)$.} Note that for $\mathcal{L}(w_1, w_2)$ in \eqref{eq:min_max_cubic_Rd} we have
$\nabla_{w_1} \mathcal{L}(w_1, w_2) = \left( w_1^{\top} \A w_1\right)^{\nicefrac{1}{2}} \A w_1 + \B w_2$ and $\nabla_{w_2} \mathcal{L}(w_1, w_2) = \B^{\top} w_1 - \left( w_2^{\top} \C w_2\right)^{\nicefrac{1}{2}} \C w_2$. Then the second-order derivatives are given by
\begin{equation*}
    \nabla_{w_1 w_1}^2 \mathcal{L} (w_1, w_2) = \| \A^{\nicefrac{1}{2}} w_1\| \A + \frac{\A w_1 w_1^{\top} \A^{\top}}{\| \A^{\nicefrac{1}{2}} w_1\|}
\end{equation*}
Here, $\A$ is positive definite and $\A w_1 w_1^{\top} \A^{\top}$ is a positive semidefinite matrix. Hence, $\nabla_{w_1 w_1}^2 \mathcal{L} (w_1, w_2)$ is a positive definite matrix as well and $\mathcal{L}(\cdot, w_2)$ is convex for any $w_2$. Similarly, we show that 
\begin{equation*}
    - \nabla_{w_2 w_2}^2 \mathcal{L} (w_1, w_2) = \| \C^{\nicefrac{1}{2}} w_2\| \C + \frac{\C w_2 w_2^{\top} \C^{\top}}{\| \C^{\nicefrac{1}{2}} w_2\|}
\end{equation*}
and $- \nabla_{w_2 w_2}^2 \mathcal{L} (w_1, w_2)$ is positive definite. Therefore, $\mathcal{L}(w_1, \cdot)$ is concave for any $w_1$. This proves that $\mathcal{L}(w_1, w_2)$ is convex with respect to $w_1$ and concave with respect to $w_2$. Thus, we conclude that the corresponding operator $F$ is monotone. 

\paragraph{Equilibrium of $\mathcal{L}(w_1, w_2)$.} To find the equilibrium points, we solve the set of equations given by $\nabla_{w_1} \mathcal{L}(w_1, w_2) = 0$ and $\nabla_{w_2} \mathcal{L}(w_1, w_2) = 0$, i.e., solve for 
\begin{eqnarray*}
    \left( w_1^{\top} \A w_1\right)^{\nicefrac{1}{2}} \A w_1 + \B w_2 & = & 0, \\
    \B^{\top} w_1 - \left( w_2^{\top} \C w_2\right)^{\nicefrac{1}{2}} \C w_2 & = & 0.
\end{eqnarray*}
Now multiplying the first equation with $w_1^{\top}$ and second one with $w_2^{\top}$, we have 
\begin{eqnarray*}
    \left( w_1^{\top} \A w_1\right)^{\nicefrac{1}{2}} w_1^{\top}\A w_1 + w_1^{\top} \B w_2 & = & 0 \\
    w_2^{\top}\B^{\top} w_1 - \left( w_2^{\top} \C w_2\right)^{\nicefrac{1}{2}} w_2^{\top}\C w_2 & = & 0
\end{eqnarray*}
Combining these two equations, we get
\begin{equation*}
    \left( w_1^{\top} \A w_1\right)^{\nicefrac{1}{2}} w_1^{\top}\A w_1 + \left( w_2^{\top} \C w_2\right)^{\nicefrac{1}{2}} w_2^{\top}\C w_2 = 0
\end{equation*}
which can be equivalently written as 
\begin{equation*}
    \left\| \A^{\nicefrac{1}{2}} w_1 \right\|^3 + \left\| \C^{\nicefrac{1}{2}} w_2 \right\|^3 = 0
\end{equation*}
which implies $w_1 = w_2 = 0$ as both $\A, \C$ are positive definite matrices (hence $\A^{\nicefrac{1}{2}}, \C^{\nicefrac{1}{2}}$ are invertible).

\end{document}